\theoremstyle{plain}
\newtheorem{theorem}{Theorem}
\newtheorem*{hypothesisF}{Hypothesis $\mathcal{F}$}
\newtheorem{lemma}[theorem]{Lemma}
\newtheorem{corollary}[theorem]{Corollary}
\newtheorem{proposition}[theorem]{Proposition}
\theoremstyle{definition}
\newtheorem{definition}[theorem]{Definition}
\theoremstyle{remark}
\newtheorem{remark}[theorem]{Remark}
\title{Half-isolated zeros and zero-density estimates}
\author{James Maynard}
\address{Mathematical Institute, Woodstock Road, Oxford OX2 6GG, UK}
\email[James Maynard]{james.alexander.maynard@gmail.com}
\author{Kyle Pratt}
\address{All Souls College, University of Oxford, UK}
\email[Kyle Pratt]{kyle.pratt@all-souls.ox.ac.uk}
\begin{document}
%
%
%
%
\begin{abstract}
We introduce a new method to detect the zeros of the Riemann zeta function which is sensitive to the vertical distribution of the zeros. This allows us to prove there are few `half-isolated' zeros. By combining this with classical methods, we improve the Ingham-Huxley zero-density estimate under the assumption that the non-trivial zeros of the zeta function are restricted to lie on a finite number of fixed vertical lines. This has new consequences for primes in short intervals under the same assumption.
\end{abstract}
%
%
%
%
\maketitle
%
%
%
%
%
%
%
%
\section{Introduction}
%
%
%
%
The Riemann Hypothesis is the claim that all of the non-trivial zeros of the Riemann zeta function $\zeta(s)$ have real part equal to $\frac{1}{2}$, and this would have important consequences for the distribution of prime numbers. One consequence would be an asymptotic formula for primes in short intervals; for any function $f(x)\rightarrow\infty$, we could obtain an asymptotic formula for the number of primes in the interval $[x,x+f(x)\sqrt{x}(\log x)]$ under the assumption of the Riemann Hypothesis.

For many of these applications to the distribution of primes, one can still obtain results of a similar strength even if there are some zeros with real part larger than $\frac{1}{2}$, provided these exceptions are sufficiently rare. If we define the counting function
\begin{align*}
N(\sigma,T) := |\{ \rho = \beta +i\gamma : \zeta(\rho) = 0,\, \beta \geq \sigma,\,0\leq \gamma \leq T\}|,
\end{align*}
then a bound of the form $N(\sigma,T)\ll T^{A(1-\sigma)+o(1)}$ for a fixed constant $A$ would imply an asymptotic formula for primes in the interval $[x,x+x^{1-1/A+o(1)}]$. In particular, the `Density Hypothesis' which claims $N(\sigma,T)\ll T^{2(1-\sigma)+o(1)}$ would imply an asymptotic formula for primes in the interval $[x,x+x^{1/2+o(1)}]$, which is almost as strong as the result obtained under the Riemann Hypothesis.

Although the Density Hypothesis remains unproven, weaker results in this direction can be obtained unconditionally. After some early work by Carlson \cite{Car1921}, Ingham \cite{Ing1940} proved the estimate
\begin{equation}
N(\sigma,T) \ll T^{3(1-\sigma)/(2-\sigma)+o(1)}.
\label{eq:InghamEstimate}
\end{equation}
Huxley \cite{Hux1972} built on work of Montgomery \cite{Mon1971} and Hal\'asz \cite{Hal1968}, and proved the estimate
\begin{align}
N(\sigma,T) \ll T^{3(1-\sigma)/(3\sigma-1)+o(1)},
\label{eq:HuxleyEstimate}
\end{align}
which is superior to Ingham's estimate for $\sigma> 3/4$. Taken together, these bounds imply 
\begin{align}
N(\sigma,T) &\ll T^{\frac{12}{5}(1-\sigma)+o(1)},\label{eq:ZeroDensity}
\end{align}
and so as  consequence one has an asymptotic formula for the number of primes in an interval $[x,x+x^{7/12+o(1)}]$. This remains essentially the best asymptotic result on primes in short intervals to date (Heath-Brown \cite{HB1988} refined the $o(1)$ term via sieve methods). We note that any improvement to the exponent in \eqref{eq:ZeroDensity} in the neighbourhood of $\sigma=3/4$ would yield a stronger result. Various authors have improved estimates when $\sigma>3/4$, but improving zero density estimates for $\sigma\le 3/4$ remains a large open problem in analytic number theory which has seen essentially no progress in the 80 years since Ingham's estimate (beyond refinements to the $o(1)$ term).

Even if one cannot improve upon \eqref{eq:ZeroDensity}, one might hope to obtain an improved estimate for primes in short intervals by considering the vertical distribution of zeros close to the line $\text{Re}(s)=3/4$. If one could show suitable cancellation in sums similar to
\[
\sum_{\substack{3/4-\delta\le\text{Re}(\rho)\le 3/4+\delta\\ 0\le \text{Im}(\rho)\le x^{5/12+\delta} }}x^{\rho-1}
\]
then one could show that we obtain an asymptotic formula for primes in the interval $[x,x^{7/12-\delta+o(1)}]$. However, there are putative bad configurations of zeros where the sum would exhibit essentially no cancellation. For example, for $T= x^{5/12+\delta}$ one would need to rule out the possibility that there are $T^{3/5}$ zeros in the set 
\[
\Bigl\{\frac{3}{4}+i\frac{2\pi n}{\log{x}}:\,n\in \mathbb{Z}\cap[0,T]\Bigr\}.
\]
Thus vertical arithmetic progressions of zeros present obstacles to progress on results about primes.  

Our first main result allows us to improve on these zero density results, if we assume some rigidity in the real parts of the zeros of $\zeta(s)$. We therefore introduce the following hypothesis.
%
%
%
%
\begin{hypothesisF}
The non-trivial zeros of the zeta function lie on a finite number of fixed vertical lines.
\end{hypothesisF}
%
%
%
%
We note that even under the assumption of Hypothesis $\mathcal{F}$, we have not ruled out configurations of zeros such as the vertical arithmetic progressions mentioned earlier.
%
%
%
%
\begin{theorem}[Density Hypothesis under Hypothesis $\mathcal{F}$]\label{thm:ZeroDensity}
Assume Hypothesis $\mathcal{F}$. Then we have 
\begin{align*}
N(\sigma,T) &\ll T^{2(1-\sigma)} \exp((\log \log T)^{O(1)}).
\end{align*}
\end{theorem}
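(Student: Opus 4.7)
The strategy is to reduce to counting zeros on each of finitely many fixed vertical lines, apply the standard zero-detection machinery to turn this into a Dirichlet polynomial large-values problem, and then split the detected zeros into ``clustered'' and ``half-isolated'' classes — the former handled by a short-interval mean-value bound that is strong enough in the high-density regime, and the latter handled by the paper's new detection method that is enabled by Hypothesis $\mathcal{F}$.

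\textbf{Setup.} Under Hypothesis $\mathcal{F}$ the zeros with $\mathrm{Re}(\rho) \geq \sigma$ lie on a finite number of fixed vertical lines $\mathrm{Re}(s) = \sigma_j$ with $\sigma_j \geq \sigma$, so losing only an $O(1)$ factor I fix one such line $\mathrm{Re}(s) = \sigma_0$ and aim to bound the number of zeros with imaginary parts in $[0,T]$. I would then run the standard Montgomery--Huxley setup (mollifier $M(s)$, smoothed Mellin transform, dyadic decomposition), which converts the problem into the following: for each dyadic $N$ in $[N_0, T^{O(1)}]$ and each Dirichlet polynomial $D_N(s) = \sum_{n \asymp N} a_n n^{-s}$ that arises, bound the number of well-spaced $\gamma \in [0,T]$ satisfying $|D_N(\sigma_0 + i\gamma)| \gg 1$ by $T^{2(1-\sigma)+o(1)}$.

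\textbf{Dichotomy and main step.} To prove this, I would declare a detected zero $\gamma_r$ to be \emph{half-isolated} if one of the windows $[\gamma_r - H, \gamma_r]$ or $[\gamma_r, \gamma_r + H]$ contains few other detected zeros (for a parameter $H$ chosen in terms of $N$, $\sigma_0$, $T$), and \emph{clustered} otherwise. In the clustered case, each detected zero is accompanied by many nearby large values of $D_N$; a short-interval $L^2$ estimate for $D_N$ then gives a much sharper bound than the global Halász--Montgomery inequality, and a careful averaging recovers the Density Hypothesis exponent for this contribution. In the half-isolated case, I would invoke the paper's novel bound on half-isolated zeros (announced in the abstract); this is the genuinely new ingredient. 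Summing the two bounds, the $O(\log T)$ dyadic scales $N$, and the $O(1)$ lines $\sigma_j$, yields $T^{2(1-\sigma)} \exp((\log\log T)^{O(1)})$, the $\exp$-factor absorbing the dyadic and other logarithmic losses accumulated along the way.

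\textbf{Main obstacle.} The hard part is unambiguously the half-isolated case. Classical large-value methods are insensitive to the real parts of the zeros and give Ingham--Huxley, which falls short of the Density Hypothesis by a positive power of $T$ near $\sigma=3/4$; Hypothesis $\mathcal{F}$ has no effect on them whatsoever. Producing a new detection method that uses the fact that a zero lies on a prescribed vertical line — together with the vertical spacing structure — is where essentially all the work of the paper lives. The dichotomy above, and the routine verification that clustered zeros are controlled classically, are the easier (though still technical) half of the argument.
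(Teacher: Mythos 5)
There is a genuine gap, and it sits exactly where you locate the ``easier half'' of the argument. Your claim that the clustered case is handled classically --- that a short-interval $L^2$ estimate for $D_N$ plus careful averaging ``recovers the Density Hypothesis exponent for this contribution'' --- does not hold in the critical range. A short-interval mean value bounds the number of detected zeros \emph{per} cluster, but not the number of clusters; to count clusters you are thrown back on the mean value theorem or Hal\'asz--Montgomery applied to one representative per cluster, and at $\sigma\approx 3/4$, $N\approx T^{2/5}$ this gives nothing better than Ingham--Huxley (indeed, integrating $|D_N|^2$ over the union of cluster windows only yields $\ll T N^{1-2\sigma+o(1)}=T^{4/5+o(1)}$ zeros at $\sigma=3/4$). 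In the paper the clustered case is precisely where the new ingredient is needed: for a cluster of size $\asymp H$ one multiplies the classical detector $D_N$ (averaged over the cluster, which forces the off-diagonal constraint $|n_1-n_2|\ll N/H$) by a \emph{flexible-length} detector $B(s)$ of length $\approx T/N^2$ built from the half-isolated zero sitting at the bottom of the cluster (Lemma \ref{lem:GoodLengthForHIZeros}); the product has length $\approx T$, a $T$-length mean value applies efficiently, and the $1/H$ saving from the off-diagonal constraint exactly cancels the cluster size $H$ (Proposition \ref{prop:HalfIsolatedClustering}). So the half-isolated machinery is not an alternative branch of a dichotomy --- it is used \emph{inside} the clustered case, and without it your clustered branch fails by a positive power of $T$.

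Two further structural mismatches. First, your notion of half-isolated (``few other detected zeros in a one-sided window of length $H$'') is not the paper's Definition \ref{def:HalfIsolated}, and it cannot be fed into the paper's half-isolated bound: that result requires \emph{all} zeros within distance $(\log T)^2$ of $\rho_0$ to lie essentially directly above it or strictly to its left; in particular there must be no zeros of larger real part nearby. Zeros below $\rho_0$ on the same line need not be detected by the same $D_N$ (they may be Type II, or detected at a different dyadic length), so ``few detected zeros below'' does not give half-isolation. Second, your plan never addresses zeros with real part larger than $\sigma_0$ near the cluster; the paper must split off such clusters (Type C) and run an induction over the finitely many vertical lines, producing the $N(\sigma+c_\mathcal{F},3T)$ term in Proposition \ref{prop:ClustersReduction} which is then iterated $O(1)$ times. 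Relatedly, Hypothesis $\mathcal{F}$ is mischaracterized in your outline: the half-isolated detection (Theorem \ref{thm:HalfIsolated}) is unconditional; the hypothesis is what guarantees the cluster structure (a half-isolated zero at the foot of every problematic cluster, via Lemma \ref{lmm:NearbyHalfIsolated}), the flexible-length detectors, and the finite-line induction --- none of which appear in your proposal.
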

%
%
%
%
An immediate consequence of this result is a conditional improvement to primes in short intervals. It is well-known that the Density Hypothesis yields a result on primes in short intervals nearly as strong as that coming from the Riemann Hypothesis.
%
%
%
%
\begin{corollary}[Primes in short intervals under Hypothesis $\mathcal{F}$]\label{cor:ShortIntervals}
Assume Hypothesis $\mathcal{F}$ and let $\epsilon>0$. Then
\begin{align*}
\#\{p\in [x,x+x^{1/2+\epsilon}]\}=(1+o_\epsilon(1))\frac{x^{1/2+\epsilon}}{\log{x}}.
\end{align*}
\end{corollary}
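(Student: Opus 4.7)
The plan is to carry out the standard deduction of primes in short intervals from a zero density estimate of Density Hypothesis strength, applied to the bound of Theorem \ref{thm:ZeroDensity}. First I would pass to counting prime powers via $\psi(x) = \sum_{p^k \le x}\log p$ and invoke the truncated Perron/explicit formula, which for a parameter $T\ge 2$ gives
\[
\psi(x+h)-\psi(x) = h - \sum_{\rho:\, |\gamma|\le T} \frac{(x+h)^\rho-x^\rho}{\rho} + O\Bigl(\frac{x(\log x)^2}{T}\Bigr),
\]
where $h=x^{1/2+\epsilon}$. Taking $T$ to be a fixed power of $x$ (say $T=x^{10}$) makes the final error term negligible. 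The remaining task is to show that the sum over zeros is $o(h)$; translating this back to $\pi(x+h)-\pi(x)$ via partial summation and the prime-power contribution (which is trivially $O(x^{1/2}\log x)$) will then yield the claimed asymptotic.

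Next I would bound the zero sum by splitting at the natural threshold $T_0:=x/h=x^{1/2-\epsilon}$. For $|\gamma|\le T_0$ the mean value theorem gives $|(x+h)^\rho-x^\rho|/|\rho|\ll h\,x^{\beta-1}$, while for $|\gamma|>T_0$ the trivial bound gives $|(x+h)^\rho-x^\rho|/|\rho|\ll x^\beta/|\gamma|$. I would then estimate each piece via dyadic decomposition in $|\gamma|$ and partial summation in $\beta$, inputting the bound $N(\sigma,T)\ll T^{2(1-\sigma)}\exp((\log\log T)^{O(1)})$ from Theorem \ref{thm:ZeroDensity}. The key optimisation is to show that both
\[
h\sup_{1/2\le\sigma\le 1} T_0^{2(1-\sigma)}\,x^{\sigma-1}
\qquad\text{and}\qquad
\sup_{T_0<T'\le T}\sup_{1/2\le\sigma\le 1} x^{\sigma}\, (T')^{1-2\sigma}
\]
are $\ll x^{1/2+\epsilon/2}\,\exp((\log\log x)^{O(1)})$. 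Since $T_0^2 = x^{1-2\epsilon}<x$ the first supremum is maximised at $\sigma=1$ and gives $h\cdot 1$; the second supremum is $x/T'$ on the range $T'\le x^{1/2}$ (maximal at the lower endpoint $T'=T_0$, yielding $x^{1/2+\epsilon}$) and is $x^{1/2}$ for $T'> x^{1/2}$. A logarithmic loss from the dyadic decomposition and the factor $\exp((\log\log x)^{O(1)})$ from Theorem \ref{thm:ZeroDensity} are both absorbed into a power of $x^{o(1)}$, which is harmless for any fixed $\epsilon>0$.

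Putting everything together gives $\psi(x+h)-\psi(x) = h + O(x^{1/2+\epsilon}\exp((\log\log x)^{O(1)}-c\log x))$ for some positive $c$ depending on $\epsilon$, so the error is indeed $o(h)$. Conversion from $\psi$ to $\pi$ is standard: the contribution of prime squares and higher powers to $\psi(x+h)-\psi(x)$ is $O(x^{1/2})$, and partial summation against $1/\log p$ introduces only a factor $(1+o(1))/\log x$ since $h=x^{1/2+\epsilon}=x^{o(1)}\cdot x^{1/2}$ so $\log(x+h)=(1+o(1))\log x$ uniformly on the interval. This yields the stated asymptotic $\#\{p\in[x,x+x^{1/2+\epsilon}]\}=(1+o_\epsilon(1))x^{1/2+\epsilon}/\log x$.

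Since this deduction is entirely classical (the Density Hypothesis has been known for decades to imply short interval results of this strength), there is no substantive obstacle in this corollary beyond routine bookkeeping of the dyadic sums; all of the difficulty is already encapsulated in Theorem \ref{thm:ZeroDensity}. The only mild point of care is verifying that the slightly weaker density bound $T^{2(1-\sigma)}\exp((\log\log T)^{O(1)})$, in place of the usual $T^{2(1-\sigma)+o(1)}$, still suffices: this is immediate because $\exp((\log\log x)^{O(1)})=x^{o(1)}$ is dominated by $x^{\epsilon/2}$ for any fixed $\epsilon>0$ and all sufficiently large $x$.
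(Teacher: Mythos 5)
Your overall strategy (explicit formula, split the zero sum at $T_0=x/h$, feed in the density bound of Theorem \ref{thm:ZeroDensity}) is the same standard route the paper takes by citing \cite[Theorem 10.5]{IK2004}, but as written your argument has a genuine gap at $\sigma$ near $1$, and your own computation exposes it. You correctly find that the first supremum $h\sup_{1/2\le\sigma\le 1}T_0^{2(1-\sigma)}x^{\sigma-1}=h\sup_\sigma x^{-2\epsilon(1-\sigma)}$ is attained at $\sigma=1$ and equals $h$ (and similarly the second supremum is of size $h$ near $T'=T_0$, $\sigma=1$). A density estimate of the form $N(\sigma,T)\ll T^{2(1-\sigma)}\exp((\log\log T)^{O(1)})$ says nothing nontrivial as $\sigma\to 1$: it permits a bounded number of zeros with $\beta=1-o(1)$, and a single such zero contributes $\asymp h\,x^{\beta-1}$, which is not $o(h)$. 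So the conclusion you then assert, $\psi(x+h)-\psi(x)=h+O\bigl(x^{1/2+\epsilon}\exp((\log\log x)^{O(1)}-c\log x)\bigr)$, does not follow from the bounds you proved; from them you only get $\psi(x+h)-\psi(x)=h+O\bigl(h\exp((\log\log x)^{O(1)})\bigr)$, which is vacuous.

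The missing ingredient is control of zeros with real part close to $1$. Two standard fixes: (i) simplest here, use Hypothesis $\mathcal{F}$ itself: the zeros lie on finitely many \emph{fixed} vertical lines, and since $\zeta(1+it)\ne 0$ each such line has real part at most $1-c_0$ for some fixed $c_0>0$; then the supremum is over $\sigma\le 1-c_0$ and gives $x^{-2\epsilon c_0}$, which easily absorbs the $\exp((\log\log x)^{O(1)})$ loss. (ii) Alternatively, restrict the supremum using a zero-free region, but note that the classical de la Vall\'ee Poussin region $1-c/\log T$ only saves a constant factor $e^{-2\epsilon c}$, and even Littlewood's region only saves a power of $\log x$, neither of which beats $\exp((\log\log x)^{O(1)})$; you would need the Vinogradov--Korobov region, whose saving $\exp(-c\epsilon(\log x)^{1/3}(\log\log x)^{-1/3})$ does dominate. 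This is exactly the input that is built into the argument of \cite[Theorem 10.5]{IK2004} which the paper invokes; once you add it (or the one-line observation (i)), the rest of your bookkeeping is fine.
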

%
%
%
%
Another consequence of Theorem \ref{thm:ZeroDensity} is a conditional improvement on primes in \emph{almost all} short intervals.
%
%
%
%
\begin{corollary}[Primes in almost all short intervals under Hypothesis $\mathcal{F}$]\label{cor:AlmostAllShortIntervals}
Assume Hypothesis $\mathcal{F}$. Let $C>0$ be a sufficiently large constant, and let $y = \exp((\log \log X)^C)$. Then
\begin{align*}
\#\{p\in [x,x+y]\}=(1+o(1))\frac{y}{\log{x}}
\end{align*}
for all but $O(X\exp(-(\log\log X)^{-2}))$ integers $x \in [X,2X]$.
\end{corollary}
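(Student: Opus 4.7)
The proof follows Selberg's variance method. By Chebyshev's inequality (and a standard passage from $\psi$ to $\pi$ in which prime powers affect only $O(y\sqrt{X}) = o(X)$ integers), the claim reduces to a second-moment bound of the form
\[
V := \int_X^{2X} |\psi(x+y)-\psi(x)-y|^2 \, dx \ll y^2 X\, \exp\bigl(-(\log\log X)^c\bigr)
\]
for a suitable constant $c > 0$. Applying the truncated explicit formula at a height $T$ polynomial in $X$ (chosen so the explicit-formula error is negligible in second mean), it suffices to bound the corresponding integral of $\sum_{|\gamma|\le T} ((x+y)^\rho - x^\rho)/\rho$.

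Under Hypothesis $\mathcal{F}$, the zeros lie on finitely many fixed vertical lines $\beta = \sigma_1,\dots,\sigma_k$, all with $\sigma_j < 1$; set $\delta := 1-\max_j \sigma_j$, which is a fixed positive constant independent of $X$. I would split the zero sum into a contribution from zeros with $\beta = 1/2$ and from each line $\beta = \sigma_j > 1/2$. For the $\beta = 1/2$ part, the classical Selberg analysis (diagonal plus Hilbert-type mean-value bounds for the cross terms) gives a contribution of size $\ll yX(\log X)^{O(1)}$, matching the bound one obtains under the Riemann Hypothesis.

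For a line $\beta = \sigma_j > 1/2$, I combine the pointwise estimate $|((x+y)^\rho-x^\rho)/\rho|\ll\min(yx^{\sigma_j-1}, x^{\sigma_j}/|\gamma|)$ with the density bound $N(\sigma_j,T)\ll T^{2(1-\sigma_j)}\exp((\log\log T)^{O(1)})$ from Theorem~\ref{thm:ZeroDensity}. A dyadic decomposition in $|\gamma|$, together with standard mean-value estimates for the off-diagonal cross terms, yields a contribution of $\ll X y^{2\sigma_j}\exp((\log\log X)^{O(1)})$ from that line. Summing over the finitely many $j$ with $\sigma_j > 1/2$ gives a total of $\ll X y^{2(1-\delta)}\exp((\log\log X)^{O(1)})$.

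Combining, $V \ll yX(\log X)^{O(1)} + X y^{2(1-\delta)}\exp((\log\log X)^{O(1)})$. For $y=\exp((\log\log X)^C)$ with $C$ sufficiently large, $V/y^2$ is much smaller than $X$, and Chebyshev's inequality (applied with threshold $\epsilon = 1/\log\log X$, say) gives the claimed exceptional-set bound. The main technical hurdle is the careful treatment of the off-diagonal cross-terms in the second moment, handled by mean-value theorems for Dirichlet polynomials (Hilbert's inequality, Halász--Montgomery). The key structural input that makes the exceptional set genuinely small is that Hypothesis $\mathcal{F}$ forces $\delta$ to be a fixed positive constant, so the density bound of Theorem~\ref{thm:ZeroDensity} translates directly into a power savings $y^{-2\delta}$ (beyond the quasi-polylog factor) for the off-critical-line contribution.
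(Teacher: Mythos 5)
Your argument is correct, and the overall skeleton (explicit formula, Selberg-type variance, Chebyshev, with Theorem \ref{thm:ZeroDensity} as the driving input) is the same standard route the paper follows via \cite[Section 9.1]{Har2007}. The genuine difference is in the one step the paper flags as delicate: since $y=\exp((\log\log X)^C)\le x^{o(1)}$, the saving $y^{-2(1-\sigma)}$ coming from the density-hypothesis bound degenerates as $\sigma\to 1$, and the paper handles zeros with $\sigma\ge 1-10^{-6}$ by importing the unconditional Vinogradov-type estimate $N(\sigma,T)\ll T^{167(1-\sigma)^{3/2}}(\log T)^{17}$, using Theorem \ref{thm:ZeroDensity} only for $\sigma\le 1-10^{-6}$. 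You instead invoke Hypothesis $\mathcal{F}$ structurally: the zero-bearing lines are finitely many and fixed, none can sit on $\mathrm{Re}(s)=1$ since $\zeta(1+it)\neq 0$, so $\delta=1-\max_j\sigma_j$ is a fixed positive constant and the near-$1$ regime is simply empty; then $y^{-2\delta}=\exp(-2\delta(\log\log X)^C)$ beats the $\exp((\log\log X)^{O(1)})$ loss once $C$ is large, exactly as you say. This is legitimate (implied constants depending on the fixed line configuration are consistent with the paper's own conventions, e.g.\ $c_\mathcal{F}$), and it makes the proof slightly more self-contained; what the paper's arrangement buys is robustness, since it uses only the \emph{conclusion} of Theorem \ref{thm:ZeroDensity} together with unconditional near-$1$ information, rather than re-invoking the rigidity of the hypothesis. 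Two small remarks: your per-line bound $\ll Xy^{2\sigma_j}\exp((\log\log X)^{O(1)})$ does require the $\min(yx^{\sigma_j-1},x^{\sigma_j}/|\gamma|)$ weighting (so that the dyadic sum over heights is dominated by $|\gamma|\asymp X/y$ rather than by the truncation height), which you do state; and lines with $\sigma_j<1/2$, present by the functional equation, should be mentioned but are handled at least as easily as the critical line. Your exceptional-set bound $\ll X\exp(-c(\log\log X)^{C'})$ is comfortably within what the corollary asserts.
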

%
%
%
%
One should be able to relax Hypothesis $\mathcal{F}$ considerably and still obtain similar results via our method. For example, we could allow for zeros with ordinate in $[T,2T]$, say, to lie on $\ll (\log T)/(\log \log T)^2$ well-spaced vertical lines. We have chosen to just present the simplest case in the interest of clarity. However, at our current state of knowledge it appears that some assumption of rigidity of real parts in necessary - see Section \ref{sec:Bows} for a discussion of this.

Underlying Theorem \ref{thm:ZeroDensity} is a new unconditional way of detecting certain `half-isolated' zeros of $\zeta(s)$. The key point is that this method is sensitive to the vertical distribution of zeros.  The precise definition of a half-isolated zero is somewhat technical (see Definition \ref{def:HalfIsolated} below), but roughly speaking $\rho_0=\beta_0+i\gamma_0$ is half-isolated if all nearby zeros have real part very close to $\beta_0$ and imaginary part at least $\gamma_0$ (so these zeros lie vertically `above' $\rho_0$). We prove, unconditionally, that half-isolated zeros have very short zero-detecting polynomials.
%
%
%
%
\begin{theorem}[Half-isolated zeros have short zero-detecting polynomials]\label{thm:HalfIsolated}
There exists an absolute constant $C\geq 1$ and a fixed non-negative smooth function $w_0$ supported in $[1/2,2]$ such that the following holds.

If $\rho_0=\beta_0+i\gamma_0$ is a half-isolated zero (see Definition \ref{def:HalfIsolated} for the precise statement) with $\gamma_0 \in [T,2T]$, then there exists a real $Y \in [T^{(\log\log{T})^3/\log{T}},T^{5/\log\log{T}}]$ such that
\begin{align*}
\Bigl|\sum_{n} \frac{\Lambda(n)}{n^{\rho_0}} w_0(n/Y) \Bigr| \geq (\log T)^{-C}.
\end{align*}
\end{theorem}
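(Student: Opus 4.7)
The plan is to begin with the Mellin-Perron representation
\begin{equation*}
S(Y) = \frac{1}{2\pi i}\int_{(c)}\Bigl(-\frac{\zeta'}{\zeta}(\rho_0+s)\Bigr)\tilde w_0(s)Y^s\,ds, \qquad c>1-\beta_0,
\end{equation*}
where $\tilde w_0$ is the Mellin transform of $w_0$, and shift the contour well to the left of the critical strip. Picking up the poles of $-\zeta'/\zeta(\rho_0+s)$ yields
\begin{equation*}
S(Y) = -\tilde w_0(0) + \tilde w_0(1-\rho_0)Y^{1-\rho_0} - \sum_{\rho\neq\rho_0}\tilde w_0(\rho-\rho_0)Y^{\rho-\rho_0} + R(Y),
\end{equation*}
with $R(Y)$ the shifted residual integral. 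I would choose $w_0\geq 0$ smooth with $\tilde w_0(0)=\int w_0(x)\,dx/x$ a fixed positive constant and $|\tilde w_0(\sigma+it)|\ll_A(1+|t|)^{-A}$ for every $A$. The middle term contributes only $O(T^{-100})$ because $|\mathrm{Im}(\rho_0)|\asymp T$, and $R(Y)$ is negligible using standard bounds on $\zeta'/\zeta$ in the critical strip provided $\log Y \gg (\log\log T)^3$, which is exactly where the lower cutoff on $Y$ in the statement comes from. Trivial zeros are handled by the same rapid decay of $\tilde w_0$ high up. It therefore suffices to locate a $Y$ in the allowed window for which
\begin{equation*}
E(Y) := \sum_{\rho\neq\rho_0}\tilde w_0(\rho-\rho_0)Y^{\rho-\rho_0}
\end{equation*}
satisfies $|E(Y)|\leq|\tilde w_0(0)|-(\log T)^{-C}$.

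The rapid decay of $\tilde w_0$ truncates $E(Y)$ to zeros with $|\gamma-\gamma_0|\leq H:=(\log T)^B$, of which there are $O(H\log T)$ by Riemann--von Mangoldt. The half-isolation hypothesis now enters decisively: every such nearby zero satisfies $\gamma\geq\gamma_0$ together with $|\beta-\beta_0|\leq\delta_0$, where $\delta_0$ is a parameter built into Definition \ref{def:HalfIsolated} and small enough that $Y^{|\beta-\beta_0|}=O(1)$ uniformly across the window $u=\log Y\in[(\log\log T)^3,\,5\log T/\log\log T]$. Consequently $E(e^u)$ reduces to a sum of $O((\log T)^{B+1})$ bounded-modulus oscillatory terms of the form $a_\rho e^{iu(\gamma-\gamma_0)}$ whose frequencies $\gamma-\gamma_0$ are all non-negative.

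The main obstacle, and the novel content of the theorem, is to exhibit a $u$ in this very short interval at which these one-sided oscillations destructively interfere enough to reduce $|E|$ below $|\tilde w_0(0)|$ by at least $(\log T)^{-C}$. The non-negativity of the frequencies is essential: if both $\gamma>\gamma_0$ and $\gamma<\gamma_0$ were allowed, then cancellation in such a narrow window would be out of reach, and this one-sidedness is exactly what the half-isolation hypothesis buys us. I expect the core step to be either a pigeonhole / simultaneous Diophantine approximation argument applied to the ordinates $(\gamma-\gamma_0)/2\pi$, or a carefully weighted second-moment computation that exploits the biased sign of frequency differences; either way, the Riemann--von Mangoldt zero-count loss of $(\log T)^{O(1)}$ must be absorbed into the final exponent $C$.
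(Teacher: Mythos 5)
Your setup---Mellin inversion, shifting the contour, discarding the pole at $1-\rho_0$, the trivial zeros and the far-away zeros, and then using half-isolation to reduce to a one-sided exponential sum over zeros with $\gamma\ge\gamma_0$ and nearly equal real part---matches the first half of the paper's argument (Proposition \ref{prp:HalfIsolated}). (One small omission: Definition \ref{def:HalfIsolated} also permits nearby zeros with real part $\le \beta_0-(\log\log T)^2/\log Y$, which may lie \emph{below} $\rho_0$; these must be discarded via the factor $Y^{\beta-\beta_0}\le\exp(-(\log\log T)^2)$, which is part of why the lower cutoff on $\log Y$ is imposed.)

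The genuine gap is the key step, and the target you reduce to is in fact the wrong one. You propose to find $Y$ with $|E(Y)|\le|\tilde w_0(0)|-(\log T)^{-C}$, i.e.\ to make the tail strictly smaller than the main term. This is not achievable in general: if, say, $(\log T)^{1/2}$ zeros lie on the line $\mathrm{Re}(s)=\beta_0$ within distance $1/\log T$ above $\rho_0$ (perfectly consistent with half-isolation), then for every $Y$ in the window the phases $(\gamma-\gamma_0)\log Y$ are $o(1)$, so $E(Y)\approx(\log T)^{1/2}\,\tilde w_0(0)\gg\tilde w_0(0)$; the theorem still holds there only because the \emph{full} sum is large, not because the tail is small. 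What the paper proves is precisely that: the combined sum $\sum_{\rho\in\mathcal{S}_{\approx}(\rho_0)}W_0(\rho-\rho_0)U^{\rho-\rho_0}$, which includes the contribution $W_0(0)=\log 2$ from $\rho_0$ itself, cannot be uniformly smaller than $(\log T)^{-100}$ as $\log U$ ranges over $[\log Y,2\log Y]$. That is the refined power sum estimate (Lemma \ref{lmm:Turan}), and neither mechanism you suggest delivers it: pigeonhole or simultaneous Diophantine approximation on the $\asymp(\log T)^3$ ordinates within $(\log T)^2$ of $\rho_0$ requires $t$ to range over an interval exponentially long in the number of frequencies, vastly exceeding the window $[A,2A]$ with $A=\log Y\ll\log T/\log\log T$, and classical Tur\'an power sums only give lower bounds of size $\exp(-cR)$, which is useless here; moreover Remark \ref{rmk:Turan}(4) shows genuinely polynomially small values can occur, so a cheap averaging or second-moment argument cannot give more than the stated polynomial saving. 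The paper's actual mechanism is analytic: after a Taylor approximation of the real-part factors, the sum becomes an analytic function of subexponential growth on the upper half-plane (because all frequencies are non-negative), its value at $iA$ is $\ge 1-o(1)$ by positivity of the coefficients $W_0(\rho-\rho_0)\approx 1$ for the low-lying zeros, and a subharmonic/Jensen-formula inequality on $\overline{\mathbb{H}}$ (Lemma \ref{lmm:Harmonic}, a quantitative F.\ and M.\ Riesz-type statement) shows a hypothetical uniform bound $B^{-99}$ on $[A,2A]$ would force $\log|f(iA)|\le-\log B$, a contradiction. Without this lemma or an equivalent substitute, the proposal does not prove the theorem.
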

%
%
%
%
Since short Dirichlet polynomials rarely take large values, one immediate consequence of the existence of short zero-detecting polynomials for half-isolated zeros is that there are few half-isolated zeros.
%
%
%
%
\begin{corollary}[Half-isolated zeros satisfy Density Hypothesis]\label{cor:half isolated zeroes satisfy DH}
The number of half-isolated zeros $\rho_0 = \beta+i\gamma$ such that $\beta \geq \sigma$ and $\gamma \in [T,2T]$ is at most $T^{2(1-\sigma)+o(1)}$.
\end{corollary}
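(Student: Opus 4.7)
My approach is to apply Theorem~\ref{thm:HalfIsolated} to each half-isolated zero and then bound the number of large values of the resulting short Dirichlet polynomial via a high-moment argument, exploiting the prime-power support of $\Lambda$.

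Write $D_Y(s) := \sum_n \Lambda(n) w_0(n/Y) n^{-s}$. Theorem~\ref{thm:HalfIsolated} produces, for every half-isolated zero $\rho_0 = \beta_0 + i\gamma_0$ with $\gamma_0 \in [T,2T]$ and $\beta_0 \geq \sigma$, a scale $Y = Y(\rho_0) \in [T^{(\log\log T)^3/\log T}, T^{5/\log\log T}]$ with $|D_Y(\rho_0)| \geq (\log T)^{-C}$; in particular $Y = T^{o(1)}$.

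First I would perform standard pigeonholing, at combined cost $(\log T)^{O(1)}$: dyadic pigeonholing on $Y$ to fix a single $Y^*$; pigeonholing $\beta_0$ into intervals of length $1/\log T$ to fix a common $\beta^* \geq \sigma$ (valid because $Y^* \leq T^{o(1)}$ makes shifting $\beta$ by $O(1/\log T)$ multiply $|D_{Y^*}|$ by at most $1+o(1)$); and passing to a unit-spaced subset of the ordinates using Riemann--von Mangoldt. It suffices to bound the number $R$ of unit-spaced ordinates $\gamma_r \in [T,2T]$ with $|D_{Y^*}(\beta^* + i\gamma_r)| \geq (\log T)^{-C-1}$.

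The main step is a $k$-th moment estimate. Choose $k = \lfloor \log T/\log Y^* \rfloor$, so that $(Y^*)^k \in [T^{1-o(1)},T]$, and the admissible range of $Y^*$ forces $k \in [\log\log T/5, \log T/(\log\log T)^3]$. Applying the Mean Value Theorem for Dirichlet polynomials to $D_{Y^*}^k$ gives
\[
R\,(\log T)^{-2(C+1)k} \leq \sum_r |D_{Y^*}^k(\beta^* + i\gamma_r)|^2 \ll T \sum_n |c_n^{(k)}|^2 n^{-2\beta^*},
\]
where $c_n^{(k)}$ are the coefficients of $D_{Y^*}^k$. Since $c_n := \Lambda(n) w_0(n/Y^*)$ is supported on prime powers in $[Y^*/2, 2Y^*]$, the coefficient $c_n^{(k)}$ vanishes unless $n$ factorises as a product of $k$ such prime powers, and there are at most $k!$ ordered such factorisations (the square-free case dominates). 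A short counting argument then gives
\[
\sum_n |c_n^{(k)}|^2 n^{-2\beta^*} \ll (\log T)^{O(k)} (Y^*)^{k(1-2\beta^*)}.
\]
Combining with $(Y^*)^{k(1-2\beta^*)} = ((Y^*)^k)^{1-2\beta^*} \leq T^{1-2\beta^* + o(1)}$ and $(\log T)^{O(k)} = T^{o(1)}$ (valid because $k\log\log T \leq \log T/(\log\log T)^2 = o(\log T)$ across the admissible range) yields $R \leq T^{2(1-\beta^*) + o(1)} \leq T^{2(1-\sigma) + o(1)}$.

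The hard part is the moment estimate $\sum_n |c_n^{(k)}|^2 n^{-2\beta^*} \ll (\log T)^{O(k)} (Y^*)^{k(1-2\beta^*)}$: the naive divisor bound $|c_n^{(k)}| \leq (\log Y^*)^k d_k(n)$ would introduce $(\log T)^{O(k^2)}$, which is too large at the lower end $Y^* = T^{(\log\log T)^3/\log T}$ to preserve the density hypothesis exponent. Using the prime-power support of $c_n$ collapses this to $(\log T)^{O(k)}$, which remains $T^{o(1)}$ throughout.
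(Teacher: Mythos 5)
Your overall strategy (short detecting polynomial, raise to a power $k\approx \log T/\log Y^*$ so the length is $\approx T$, discrete mean value theorem, and control of the coefficients of $D_{Y^*}^k$ via the prime-power support of $\Lambda$) is exactly the standard ``short polynomials rarely take large values'' argument that the paper has in mind when it calls the corollary immediate, and that part of your write-up is essentially sound: the factorisation count is $(\log T)^{O(k)}$ (your ``$k!$'' is heuristic, but a slot-by-slot bound gives $(\log T)^{O(k)}$, which suffices), and $(\log T)^{O(k)}=T^{o(1)}$ in the admissible range of $k$.

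The genuine gap is the reduction to a fixed real part $\beta^*$. Your justification --- that shifting $\beta$ by $O(1/\log T)$ multiplies $|D_{Y^*}|$ by $1+o(1)$ --- is false: the shift is not a common multiplicative factor, and the additive error is of size up to $\sum_n \Lambda(n)w_0(n/Y^*)\,n^{-\beta}\cdot O(\log Y^*/\log T)\asymp (Y^*)^{1-\beta}/\log T$, which even for the smallest admissible $Y^*=\exp((\log\log T)^3)$ and $\beta$ near $1/2$ or $3/4$ is enormous compared with the threshold $(\log T)^{-C}$. Since $|D_{Y^*}(\rho_0)|\ge(\log T)^{-C}$ reflects massive cancellation relative to the trivial bound $(Y^*)^{1-\beta}$, largeness at $\beta_0+i\gamma_0$ simply does not transfer to largeness at $\beta^*+i\gamma_0$ by perturbation. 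The standard repair is the device the paper itself supplies: apply Lemma~\ref{lem:RealPartsBiggerThanSigma} to write $\sum_{r}|D_{Y^*}(\beta_r+i\gamma_r)|$ in terms of modified polynomials $\tilde D$ evaluated at the fixed abscissa $\sigma$, take a supremum over the auxiliary $\alpha$, and only then raise to the $2k$-th power by H\"older before invoking the mean value theorem (this is exactly how the paper treats the analogous counts in Lemmas~\ref{lmm:MaxClustersShort} and~\ref{lmm:exceptionalClusters}). A secondary, more minor issue: dyadic pigeonholing in $Y$ does not fix a single polynomial, since $w_0(n/Y)$ depends on the exact $Y$; you should instead use Proposition~\ref{prp:HalfIsolated}, which places $Y$ in a fixed set $\mathcal{U}$ of size $(\log T)^{152}$, or run a continuity-in-$Y$ argument (a derivative bound in $U$ costs only $T^{o(1)}$ extra classes). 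With these two repairs your proof goes through and coincides with the paper's intended argument.
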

%
%
%
%
Similar results on efficiently detecting certain zeros of $\zeta(s)$ have been obtained in earlier work of Heath-Brown (unpublished), Balasubramanian and Ramachandra \cite{BS1982}, and Conrey and Iwaniec \cite{CI2014}. The key distinguishing feature of our work is that we can detect zeros even when there are many other zeros nearby, and this allows us to address possible obstructions such as vertical arithmetic progressions of zeros.

We hope that the ideas underlying this estimate might evenually be extended to give an improvement to the Huxley-Ingham bound \eqref{eq:ZeroDensity}. Unfortunately we have been unable, thus far, to exploit half-isolated zeros to unconditionally provide improvements to bounds for $N(\sigma,T)$; see Section \ref{sec:Bows} for a discussion of what we perceive to be the key obstacles to obtaining an unconditional version of Theorem \ref{thm:ZeroDensity}.

Lastly, we mention a more speculative result. One may hypothesize about the vertical distribution of the zeros when Hypothesis $\mathcal{F}$ occurs. Since the zeros are constrained to vertical lines, and any vertical line with real part larger than 1/2 must have relatively few zeros (by the Ingham-Huxley bounds), we see that the zeros must occur in separate `clusters' of nearby zeros (where a zero of height $T$ is `nearby' another if it is within $(\log{T})^3$, say, of the other zero.) One might naively expect that if the zeros were somehow randomly distributed on the vertical lines, then any cluster of zeros would have to be quite short. If this is indeed the case, then we obtain stronger information on moments of the Riemann zeta function.
%
%
%
%
\begin{theorem}\label{thm:FifthMoment}
Assume Hypothesis $\mathcal{F}$, and assume that every cluster of zeros (see Definition \ref{dfntn:Cluster} for a precise statement) has size $\leq T^{\varepsilon_0}$ for some $\varepsilon_0 \geq 0$. Then
\begin{align*}
\int_0^T |\zeta(\tfrac{1}{2}+it)|^5 dt \ll_{\varepsilon_0} T^{1+O(\varepsilon_0)+o(1)}.
\end{align*}
\end{theorem}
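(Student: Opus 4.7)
The plan is to reduce the fifth moment to a sixth-moment bound via Cauchy--Schwarz, and then obtain the sixth-moment bound by combining the approximate Dirichlet polynomial structure of $\zeta^3$ with zero-density information from Theorem~\ref{thm:ZeroDensity} together with the cluster hypothesis. First, Cauchy--Schwarz gives
\[
\int_0^T |\zeta(\tfrac{1}{2}+it)|^5 dt \leq \Bigl(\int_0^T |\zeta(\tfrac{1}{2}+it)|^4 dt\Bigr)^{1/2}\Bigl(\int_0^T |\zeta(\tfrac{1}{2}+it)|^6 dt\Bigr)^{1/2},
\]
and since the classical fourth moment satisfies $\int_0^T |\zeta|^4 dt \ll T (\log T)^4$, the theorem reduces to showing
\[
\int_0^T |\zeta(\tfrac{1}{2}+it)|^6 dt \ll T^{1+O(\varepsilon_0)+o(1)}.
\]

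To handle the sixth moment, I would use the approximate functional equation to write $\zeta(\tfrac{1}{2}+it)^3 \approx D(t) := \sum_{n \leq T^{3/2}} d_3(n) n^{-1/2-it}$, so that $\int_0^T |\zeta|^6 dt$ is essentially $\int_0^T |D(t)|^2 dt$. Applying Heath--Brown's identity to expand $d_3(n) = \sum_{abc = n} 1$ and splitting dyadically reduces the task to bounding integrals of the form $\int_0^T |D_A(t) D_B(t) D_C(t)|^2 dt$ for Dirichlet polynomials of lengths $A, B, C$ with $ABC \asymp T^{3/2}$. When one of $A, B, C$ is much larger than the others, the standard mean value theorem for Dirichlet polynomials already gives a bound $\ll T^{1+o(1)}$. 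The difficult case is the balanced range $A \asymp B \asymp C \asymp T^{1/2}$, which is exactly the range that obstructs the unconditional sixth moment problem.

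In the balanced case, I would use a Hal\'asz-type large-value argument. The set of $t$ where $|D_A D_B D_C|$ exceeds a threshold $V$ is detected, via the standard reflection of large values to zeros, by zeros of $\zeta$ in a neighborhood; Hypothesis~$\mathcal{F}$ confines these zeros to finitely many vertical lines, and Theorem~\ref{thm:ZeroDensity} supplies the Density Hypothesis $N(\sigma, T) \ll T^{2(1-\sigma)+o(1)}$ for them. The additional saving comes from the cluster hypothesis: each cluster has at most $T^{\varepsilon_0}$ off-critical zeros, so the distinct cluster positions are sparse, and each cluster contributes at most $T^{O(\varepsilon_0)+o(1)}$ to the relevant $L^2$ norm via a short zero-detecting polynomial in the spirit of Theorem~\ref{thm:HalfIsolated}. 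Summing over the polylogarithmically many Heath--Brown dyadic pieces yields $\int_0^T |D|^2 dt \ll T^{1+O(\varepsilon_0)+o(1)}$, the required sixth moment bound.

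The main obstacle is making precise the link between a large value of $|D_A D_B D_C|$ at some $t_0$ and the existence of a cluster of off-critical zeros near $t_0$, and then quantitatively translating the cluster size bound $T^{\varepsilon_0}$ into a Dirichlet polynomial large-value estimate. This requires extending the zero-detecting polynomial ideas underlying Theorem~\ref{thm:HalfIsolated} to accommodate the specific products $D_A D_B D_C$ produced by Heath--Brown's identity, while keeping the $\varepsilon_0$ loss multiplicative in each dyadic piece so that the $(\log T)^{O(1)}$ pieces accumulate only to $T^{O(\varepsilon_0)+o(1)}$ in total.
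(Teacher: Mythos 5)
Your opening Cauchy--Schwarz step trades the fifth moment for the sixth moment, and this is already a reduction to a strictly harder problem that the stated hypotheses do not obviously resolve. The entire point of the paper's decomposition $5=4+1$ is that only a \emph{single} factor of $\zeta$, of approximate polynomial length $T^{1/2}$, has to be carried alongside $|\zeta|^4$: after splitting $n=n_1n_2$ into a $T^{\epsilon}$-smooth and a rough part, and one more Cauchy--Schwarz, every invocation of Watt's mean value theorem involves a twisting polynomial of length at most $T^{1/4}$, which is exactly Watt's range. A sixth-moment bound would require $|\zeta|^4$ twisted by the \emph{square} of a polynomial of length about $T^{1/2}$, far beyond Watt, and neither Hypothesis $\mathcal{F}$, the cluster bound, nor Theorem \ref{thm:ZeroDensity} supplies a substitute. (Your claim that the unbalanced ranges of the Heath--Brown decomposition are handled by the plain mean value theorem is also not correct: applied to a product of total length $T^{3/2}$ it only gives $T^{3/2+o(1)}$.)

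The more fundamental gap is the asserted ``standard reflection of large values to zeros.'' In zero-density theory the implication runs the other way: a zero forces a specific mollified zero-detecting polynomial to be large. There is no converse statement that a large value of the generic, unmollified pieces $D_AD_BD_C$ of $\zeta^3$ at height $t_0$ forces an off-critical zero near $t_0$; indeed $|\zeta(\tfrac12+it)|$ can be large with no off-line zeros nearby, and the hypotheses of the theorem constrain only the zeros on the lines with real part $>\tfrac12$ (see the restatement in Proposition \ref{prop:FifthMoment}), saying nothing about the critical line. Consequently the cluster hypothesis and the detectors of Theorem \ref{thm:HalfIsolated} cannot be brought to bear on the balanced range $A\asymp B\asymp C\asymp T^{1/2}$, which is then exactly the unconditional obstruction to the sixth moment, and your argument does not close. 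The paper's proof avoids the sixth moment altogether: it applies the approximate functional equation to one factor of $\zeta$, splits the resulting length-$T^{1/2}$ polynomial multiplicatively into smooth and rough parts, treats a long smooth part by factorization plus Watt's theorem, and converts the rough (prime) part via Mellin inversion into sums over zeros; only at that point do the cluster hypothesis (every off-line zero lies within $T^{\varepsilon_0+o(1)}$ of a half-isolated zero) and the flexible-length zero detector, raised to a suitable power $\kappa$, enter, restoring polynomials of length at most $T^{1/4}$ so that Watt's theorem can be applied twice after Cauchy--Schwarz.
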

%
%
%
%
We leave the proof of Theorem \ref{thm:FifthMoment} to an appendix since it has a slightly different flavour from the rest of our paper.
%
%
%
%
\section{Outline}
%
%
%
%
\subsection{Zero detecting polynomials}
%
%
%
%
The methods of Ingham and Huxley are based on showing that if $\rho$ is a zero of $\zeta(s)$, then there must be a Dirichlet polynomial which takes a large value at $\rho$; one then shows that these Dirichlet polynomials rarely take large values. The proof of the existence of the zero-detecting polynomial does not give much control over the length of the Dirichlet polynomial produced, but the bounds for the frequency of large values are quite sensitive to the length of the Dirichlet polynomial. The method works well if the Dirichlet polynomial has a convenient length, but the current estimates are limited by large value estimates for Dirichlet polynomials of inconvenient length (for $\sigma=3/4$ the critical case is Dirichlet polynomials of length $T^{2/5}$). If the Dirichlet polynomial has length $T^\epsilon$, for example, then the number of large values on the line $\text{Re}(s)=\sigma$ is $O(T^{2(1-\sigma)+\epsilon+o(1)})$, giving Density Hypothesis-strength bounds when $\epsilon$ is small.

There is another, less well-known, circle of ideas for detecting zeros of the zeta function, going back to the above-mentioned work of Heath-Brown, Balasubramanian and Ramachandra, and Conrey and Iwaniec. This is our starting point.

Choose a smooth function $w$ with compact support, and for a parameter $Y>0$ we consider the Dirichlet polynomial
\begin{align*}
S_Y(s):=\sum_n \frac{\Lambda(n)}{n^{s}}w(n/Y).
\end{align*}
We use Mellin inversion to rewrite the sum as an integral involving $-\zeta'/\zeta$, and then shift the line of integration and pick up contributions from the zeros of $\zeta$. Hence
\begin{align*}
S_Y(s)=\sum_n \frac{\Lambda(n)}{n^{s}}w(n/Y) \approx \sum_{\rho} Y^{\rho - s} W(\rho - s),
\end{align*}
where $W$ is the Mellin transform of $w$ and $\rho$ ranges over non-trivial zeros of $\zeta$. The rapid decay of $W$ reduces the sum over zeros to zeros $\rho$ which are close to $s$. If $s=\rho_0$ is a zero of $\zeta$, then there is a large contribution $W(0)$ from $\rho=\rho_0$, and so we find
\begin{align}
S_Y(\rho_0)\approx W(0)+\sum_{0<|\rho-\rho_0| \text{ small}} Y^{\rho - \rho_0} W(\rho - \rho_0).\label{eq:ZeroSum}
\end{align}
If there are no other nearby zeros, then  the sum above is empty and $S_Y(s)$ takes a large value at $\rho_0$. Therefore $S_Y(s)$ is a zero detecting polynomial with controlled length $Y$, and by choosing $Y=T^\epsilon$ we are in a favourable situation for our large-values estimates. This gives Heath-Brown's result that there are few `isolated' zeros. The work of Balasubramanian-Ramachandra and Conrey-Iwaniec used Tur\'an power-sum estimates to show that $S_Y(\rho_0)$ is large for some suitable $Y$ provided there are few zeros in the neighbourhood of $\rho_0$. However, if a zero $\rho_0=\beta_0 + i\gamma_0$ has $\asymp \log T$ zeros nearby (assuming $\gamma_0 \approx T$), then this is too many zeros for the power sum methods to handle.
%
%
%
%
\subsection{Improved power-sum estimates}
%
%
%
%
Following the above strategy, by considering the distribution of zeros close to $\rho_0$ one might hope to be able to choose $Y$ to avoid the `inconvenient' lengths which arise in the Ingham-Huxley methods and still show that $S_Y(\rho_0)$ is large. Unfortunately this is not possible in general; if the zeros in the neighbourhood of $\rho_0$ are approximately in a vertical arithmetic progression with common difference $2\pi c i/\log{T}$ then $S_Y(\rho_0)$ is small for all choices of smooth $w$ and $Y<T^{1/c-\epsilon}$. Therefore vertical arithmetic progressions again indicate a barrier to this approach, and this potential barrier is not ruled out by Hypothesis $\mathcal{F}$.

One might speculate that the \textit{only} obstruction to $S_Y(\rho_0)$ being uniformly small in this way is caused by the local distribution of zeros being a pertubation of a small number of vertical arithmetic progressions. This would mean that the only obstructions to the approach are quite rigid, and in particular half-isolated zeros would be detectable since locally there would not be any arithmetic progressions of zeros extending below the half-isolated zero.

The key technical ingredient in the deduction of Theorem \ref{thm:HalfIsolated} is an improved power sum result which can be adapted to detect half-isolated zeros. This then does allow us to deduce that there is a choice of small $Y$ where $S_Y(\rho_0)$ is large when $\rho_0$ is half-isolated. There are several technical assumptions in the full statement of the result, but the following special case is indicative.
%
%
%
%
\begin{lemma}[Simple case of power sum inequality]
Let $\theta_1\leq \cdots \leq \theta_R$ be real numbers. There exists an absolute constant $B_0 \geq 1$ such that the following is true. For any $A >0$, there exists a real $t\in [A,2A]$ such that
\begin{align*}
\Bigl|\sum_{r=1}^R \exp(it\theta_r) \Bigr| \geq (B_0 R)^{-99}.
\end{align*}
\end{lemma}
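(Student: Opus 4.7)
The plan is to reduce to a fixed interval by scaling, and then combine a weighted second-moment argument with a clustering/iteration. First, by substituting $s = t/A$ and replacing each $\theta_r$ by $\tilde\theta_r = A\theta_r$, one has $e^{it\theta_r} = e^{is\tilde\theta_r}$ and the range $t \in [A, 2A]$ corresponds to $s \in [1,2]$. So it suffices to show: for any real $\tilde\theta_1, \ldots, \tilde\theta_R$, there exists $s \in [1,2]$ with $|\sum_r e^{is\tilde\theta_r}| \geq (B_0 R)^{-99}$. Set $P(s) := \sum_r e^{is\tilde\theta_r}$.

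\textbf{Weighted second moment.} I would compute $\int \eta(s) |P(s)|^2\, ds = R\, \hat\eta(0) + \sum_{r \neq r'} \hat\eta(\tilde\theta_{r'} - \tilde\theta_r)$ for a smooth non-negative bump $\eta$ supported in $[1,2]$ whose Fourier transform decays rapidly. If the off-diagonal sum does not overwhelm the diagonal $R\,\hat\eta(0)$, then $\int\eta \cdot \sup |P|^2 \geq \int \eta |P|^2 \gg R$, yielding $\sup_{s\in[1,2]} |P(s)| \gg \sqrt{R}$, which far exceeds $(B_0 R)^{-99}$. The problematic case is when many off-diagonal pairs $(r, r')$ have $|\tilde\theta_r - \tilde\theta_{r'}|$ very small.

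\textbf{Clustering and iteration.} In the hard case, I would cluster the $\tilde\theta_r$'s into groups of diameter at most a small absolute constant $\epsilon$, with distinct clusters separated by gaps $> \epsilon$. Within a cluster of size $n_l$ centered at $\tilde\phi_l$, a Taylor expansion yields $\sum_{r \in \mathrm{cl}_l} e^{is\tilde\theta_r} = n_l e^{is\tilde\phi_l}(1 + O(\epsilon))$ uniformly for $s \in [1,2]$, so $P(s)$ is well approximated by $\sum_l n_l e^{is\tilde\phi_l}$, a power sum with $L < R$ well-separated frequencies and positive real coefficients summing to $R$. Reapplying the second-moment step to this reduced sum (which now enjoys much stronger off-diagonal control thanks to the $\epsilon$-spacing of the $\tilde\phi_l$) either produces a large value or forces further merging of frequencies, strictly reducing $L$.

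\textbf{Main obstacle.} The principal challenge is controlling error accumulation through the clustering iteration, and ensuring that each iteration loses only a polynomial (not exponential) factor in $R$, as well as adaptively choosing the clustering scale $\epsilon$ so that each step makes definite progress in $L$. An alternative avoiding the iteration is to go directly to the $2M$-th moment with $M$ comparable to $99$, expanding $|P|^{2M}$ into tuple sums and exploiting Vinogradov-style additive structure in the $\tilde\theta_r$'s to sharpen the diagonal-versus-off-diagonal analysis; this replaces iteration with delicate combinatorial bookkeeping of near-coincidences $\sum_i (\tilde\theta_{r_i} - \tilde\theta_{s_i}) \approx 0$. Either way, the specific exponent $99$ in $(B_0 R)^{-99}$ reflects the depth (or moment order) required to overcome off-diagonal cancellations arising from adversarial configurations such as near-arithmetic progressions of the $\tilde\theta_r$'s.
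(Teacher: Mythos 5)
Your approach never uses the feature that makes the lemma true, and the specific steps you propose are refuted by examples recorded in the paper itself. The paper's proof (via Lemma \ref{lmm:Turan}) is complex-analytic: after factoring out $e^{it\theta_1}$ one gets a sum with non-negative frequencies, which therefore extends to a bounded analytic function $g$ on the upper half-plane with $g(iA)=\sum_r e^{-A(\theta_r-\theta_1)}\ge 1$; the subharmonic (Poisson--Jensen) inequality of Lemma \ref{lmm:Harmonic} bounds $\log|g(iA)|$ by a harmonic average of $\log|g|$ on $\mathbb{R}$, the interval $[A,2A]$ carries harmonic measure $\ge 1/(5\pi)$ at $iA$, and since $99>5\pi$ the assumption $|g|\le (B_0R)^{-99}$ on $[A,2A]$ forces that average below $-\log R$, a contradiction. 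The ordering hypothesis (one-sidedness after subtracting $\theta_1$) and the fact that the extreme frequency carries a coefficient of modulus $1$ are exactly what give the lower bound at the interior point $iA$; your argument stays entirely on the real line and never invokes either.

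Concretely, the second-moment step fails: diagonal dominance for $\int\eta|P|^2$ over a unit-length window is false in general. Remark \ref{rmk:Turan}(4) (Bourgain's example) exhibits unit coefficients and frequencies separated by an absolute constant (so your clustering at scale $\epsilon<1$ merges nothing) for which $\sup_{t\in[A,2A]}|P|\le R^{-0.55}$; hence every moment of $P$ on the window is polynomially small and the off-diagonal terms cancel the diagonal $\asymp R$ almost exactly. So the dichotomy ``either the diagonal dominates or the frequencies cluster at scale $\epsilon$'' is wrong, the reduced well-separated problem cannot be settled by a second (or $2M$-th) moment with diagonal-versus-off-diagonal bookkeeping, and the iteration has no mechanism to make progress. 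Remark \ref{rmk:Turan}(3) shows further that positivity of the cluster weights alone cannot suffice: smooth positive weights on one-sided, separated frequencies give a sum that is $O_j(R^{-j})$ on $[A,2A]$, so any proof must exploit that the lowest frequency retains a coefficient bounded away from $0$ at a point off the real axis, which moment estimates on the interval cannot see. There is also a quantitative conflict in your clustering: for the approximation $\sum_{r\in\mathrm{cl}_l}e^{is\tilde\theta_r}=n_le^{is\tilde\phi_l}(1+O(\epsilon))$ to be useful at the target threshold you need $\epsilon R\ll (B_0R)^{-99}$, i.e.\ $\epsilon\ll R^{-100}$, and separation at that scale yields no off-diagonal control whatsoever.
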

%
%
%
%
Traditional power-sum methods would yield a lower bound which is exponentially small in $R$, but versions of this result have appeared before. See Section \ref{sec:HalfIsolated} for a full statement of the result and further discussion.
%
%
%
%
\subsection{Local-global structure using Hypothesis \texorpdfstring{$\mathcal{F}$}{F}}
%
%
%
%
A key point of Hypothesis $\mathcal{F}$ is that if the zeros lie on finitely many vertical lines, then zeros come in vertical `clusters', with a half-isolated zero at the bottom of each cluster (see Definition \ref{dfntn:Cluster} for a precise definition of \emph{cluster}, but imagine two zeros $\rho,\rho'$ of height $T$ are in the same cluster if there is a sequence of zeros $\rho_1,\dots,\rho_j$ on the same vertical line with $\rho_1=\rho$, $\rho_j=\rho'$ and $|\rho_i-\rho_{i+1}|\le (\log{T})^3$ for all $1\le i<j$.)

We exploit the clustering phenomenon by averaging over zeros within a single cluster, as well as averaging over clusters themselves. To do this successfully requires a combination of the flexible zero-detecting polynomials for half-isolated zeros, as well as the more classical zero-detecting techniques.
%
%
%
%
\section*{Acknowledgements}
We thank Roger Heath-Brown for comments on an earlier version of this paper, and for his generosity in allowing us to incorporate his ideas on an improved arrangement for bounding the relevant Dirichlet polynomials in Section \ref{sec:ClusteringBound}. This led to a stronger bound for $N(\sigma,T)$ and a cleaner proof compared with our earlier version.

 JM is supported by a Royal Society Wolfson Merit Award, and this project has received funding from the European Research Council (ERC) under the European Union’s Horizon 2020 research and innovation programme (grant agreement No 851318).
%
%
%
%
\section{Notation}

It is always the case that $T$ denotes a large real number.

We utilize the traditional Landau and Vinogradov notations $O(\cdot), o(\cdot), \ll$, and $\gg$. We say $f = O(g)$ if there is a constant with $|f| \leq Cg$, and $f\ll g$ or $g \gg f$ mean $f = O(g)$. If the implied constant in a $O(\cdot)$ or $\ll$ depends on some other quantity we sometimes indicate this with a subscript, such as $f \ll_\varepsilon g$. We write $o(1)$ for a quantity that tends to zero as $T$ goes to infinity.

We write $n \sim N$ for the condition $N < n \leq 2N$.

For a function $f: \mathbb{R}_{\geq 0} \rightarrow \mathbb{C}$ with suitable convergence properties we define the Mellin transform
\begin{align*}
F(s) = \int_0^\infty f(x) x^{s-1} dx.
\end{align*}
In general, we follow the convention of using a capital letter to denote the Mellin transform of a function.

Throughout the paper $w_0(x)$ denotes a fixed, non-negative smooth function supported in $[\frac{1}{2},2]$ with certain bounds on its derivatives. We construct such a $w_0$ in Lemma \ref{lem:Partition}.

For a real number $c$ we write
\begin{align*}
\frac{1}{2\pi i}\int_{(c)} f(z) dz = \frac{1}{2\pi i}\int_{c-i\infty}^{c+i\infty} f(z) dz.
\end{align*}
We write $\text{Re}(z)$ and $\text{Im}(z)$ for the real and imaginary parts, respectively, of $z \in \mathbb{C}$, so that $z = \text{Re}(z) + i\text{Im}(z)$. If $z = re^{i\theta}$ with $r\geq 0$ and $\theta \in (-\pi,\pi]$ we write $\text{arg}(z) = \theta$.

As usual, we write $e(x) = e^{2\pi i x}$. We write $P^+(n)$ and $P^-(n)$ for the greatest and least prime factor of a positive integer $n$, where $P^+(1) = P^-(1) = 1$. We write $\tau(n)$ for the usual divisor function, and $\mu(n)$ for the M\"obius function.

A set $\mathcal{S}$ of real numbers if $G$-separated if $|\alpha - \beta| \geq G$ for all distinct $\alpha,\beta \in \mathcal{S}$. The notation $|\mathcal{S}|$ denotes the cardinality of a finite set.
%
%
%
%
\section{Half-isolated zeros}\label{sec:HalfIsolated}
%
%
%
%
In this section we introduce half-isolated zeros and then establish Theorem \ref{thm:HalfIsolated} in the more quantitative form of Proposition \ref{prp:HalfIsolated}. This section is essentially self-contained and can be read separately from the rest of the paper. We begin by giving a precise definition of \emph{half-isolated zero}.
%
%
%
%
\begin{definition}\label{def:YHalfIsolated}
Given a real $Y>1$, we call a zero $\rho_0=\beta_0+i\gamma_0$ of $\zeta(s)$ \textit{`$Y$-half-isolated'} if every zero $\rho'=\beta'+i\gamma'$ of $\zeta(s)$ with $|\rho'-\rho_0|\le (\log{|\gamma_0|})^2$ satisfies at least one of the following:
\begin{enumerate}
\item (Similar real part and larger imaginary part)
\[
|\beta'-\beta_0|\le \frac{1}{10\log{Y}}\qquad \text{and} \qquad \gamma'\ge \gamma_0.
\]
\item (Smaller real part)
\[
\beta'\le \beta_0-\frac{(\log\log{|\gamma_0|})^2}{\log{Y}}.
\]
\end{enumerate}
\end{definition}
%
%
%
%
\begin{definition}\label{def:HalfIsolated}
We call a zero $\rho_0=\beta_0+i\gamma_0$ of $\zeta(s)$ \textit{`half-isolated'} if it is $Y$-half-isolated for some $Y$ satisfying
\[
(\log\log{|\gamma_0|})^3\le \log{Y}\le \frac{\log{|\gamma_0|}}{\log\log|\gamma_0|}.
\]
\end{definition}
%
%
%
%
The reader should bear in mind the situation when $\gamma_0\in[ T,2T]$ and $Y\approx T^{1/\log\log{T}}$, in which case any zero $\beta+i\gamma$ within $(\log{T})^2$ of $\rho_0$ must satisfy either $\beta\le \beta_0-(\log\log{T})^3/\log{T}$ or $|\beta-\beta_0|\leq (\log\log{T})/(10\log{T})$ and $\gamma\ge \gamma_0$.
%
%
%
%
\begin{center}
\begin{figure}[h!!!]
\begin{tikzpicture}[every node/.style={scale=0.8}]
\fill[fill=gray!30] (0,0) rectangle (10,10);
\fill[fill=white] (5,5) circle (4.5);
\fill[fill=gray!30] (0,0) rectangle (3,10);
\fill[fill=gray!30] (4.5,10) rectangle (5.5,5);
\fill[fill=black] (5,5) circle (0.1);
\draw[<->] (3,5)--(4.9,5);
\draw[<->] (5.07,4.93)--(8.2,1.8);
\draw[<->] (4.5,7)--(5.5,7);
\node [] at (5,4.6) {$\rho_0$};
\node [align=center] at (5,2) {No nearby zeros\\ below $\rho_0$};
\node [align=center] at (7.5,6) {No nearby zeros\\ to the right of $\rho_0$};
\node [align=center] at (1.5,5) {Possible zeros\\ to the left of $\rho_0$};
\node [align=center] at (5,8.5) {Possible zeros\\ vertically above $\rho_0$};
\node [align=center] at (8.5,0.5) {Possible zeros\\ far away from $\rho_0$};
\node [] at (7.2,3.5) {$(\log{T})^2$};
\node [] at (4,4.6) {$\frac{(\log\log{T})^3}{\log{T}}$};
\node [] at (5,6.6) {$\frac{\log\log{T}}{10\log{T}}$};
\end{tikzpicture}
\caption{Possible locations of zeros near a half-isolated zero}
\end{figure}
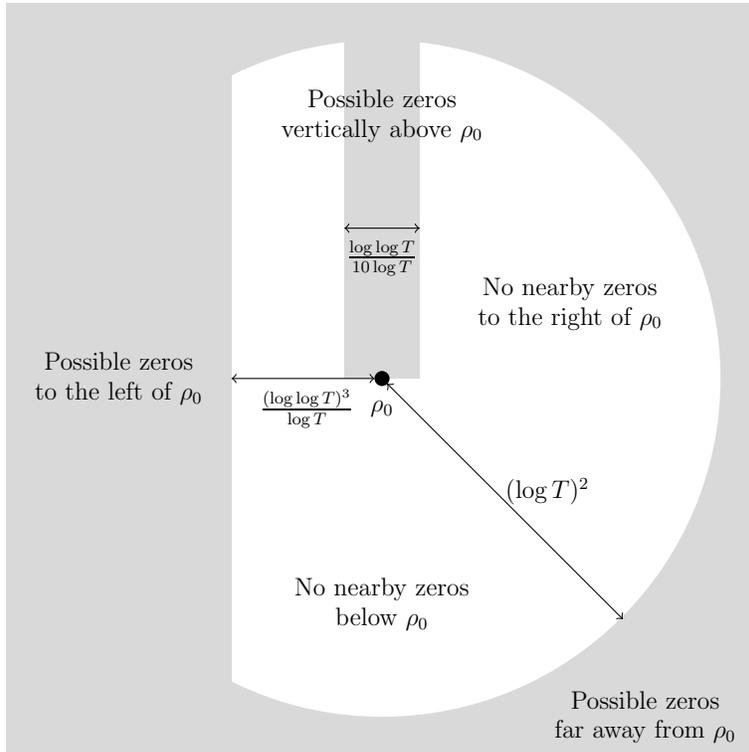
\end{center}
%
%
%
%
We use $\mathbb{H}$ to denote the upper half-plane, and $\overline{\mathbb{H}}$ the completed upper half-plane:
\begin{align*}
\mathbb{H}&:=\{x+iy:\,y> 0\},\qquad \overline{\mathbb{H}}:=\mathbb{H}\cup\mathbb{R}=\{x+iy:\,y\ge 0\}.
\end{align*}
%
%
%
%
\begin{lemma}[Subharmonic inequality for subexponential analytic functions on $\overline{\mathbb{H}}$]\label{lmm:Harmonic}
Let $f:\overline{\mathbb{H}}\rightarrow \mathbb{C}$ be an analytic function on $\mathbb{H}$ which is continuous on $\overline{\mathbb{H}}$ and such that for all $z\in\overline{\mathbb{H}}$ we have $|f(z)|\le \exp(O(|z|^{1/2}))$. Then for every $z=x+iy\in\mathbb{H}$ with $f(z)\ne 0$ we have
\[
\log|f(z)|\le \frac{y}{\pi}\int_{-\infty}^{\infty}\frac{\log|f(t)|}{(t-x)^2+y^2}dt.
\]
\end{lemma}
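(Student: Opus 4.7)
The plan is to recognise the inequality as the familiar Poisson integral upper bound for the subharmonic function $\log|f|$ on the upper half-plane, and to prove it via the subharmonic maximum principle on an exhausting sequence of semi-discs, together with a Phragm\'en--Lindel\"of style multiplier that converts the growth bound $\log|f(z)| \le O(|z|^{1/2})$ into outright decay at infinity on $\overline{\mathbb{H}}$. First, I would fix the branch of $(-iz)^{3/4}$ on $\overline{\mathbb{H}}$ with $\arg(-iz) \in [-\pi/2,\pi/2]$, so that $\mathrm{Re}((-iz)^{3/4}) \ge \cos(3\pi/8)\, |z|^{3/4}$ throughout $\overline{\mathbb{H}}$. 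For each $\epsilon > 0$, I then introduce the auxiliary function
\[
g_\epsilon(z) := f(z) \exp\bigl(-\epsilon (-iz)^{3/4}\bigr),
\]
which remains analytic on $\mathbb{H}$ and continuous on $\overline{\mathbb{H}}$, but now satisfies $|g_\epsilon(z)| \le \exp(O(|z|^{1/2}) - \epsilon \cos(3\pi/8)\, |z|^{3/4})$. In particular $|g_\epsilon|$ is bounded on $\overline{\mathbb{H}}$ and tends to $0$ as $|z| \to \infty$; and since $\mathrm{Re}((-it)^{3/4}) \ge 0$ for every real $t$, one has $\log|g_\epsilon(t)| \le \log|f(t)|$.

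Next I would work on the upper semi-discs $S_R = \{w \in \mathbb{H} : |w-x| < R\}$ for $R > y$. After truncating the boundary data at level $-N$ (to avoid $-\infty$ values coming from zeros of $f$) and solving the Dirichlet problem on $S_R$, the subharmonic maximum principle applied to $\log|g_\epsilon|$ yields, in the limit $N \to \infty$ by monotone convergence,
\[
\log|g_\epsilon(z)| \le \int_{\partial S_R} \log|g_\epsilon(\zeta)|\, d\omega_z^R(\zeta),
\]
where $\omega_z^R$ denotes harmonic measure on $\partial S_R$ from $z$. I would split this boundary integral into the real segment $I_R = [x-R,x+R]$ and the upper semicircular arc $C_R$. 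An explicit conformal map from $S_R$ to $\mathbb{H}$ (e.g.\ a rescaling of $\zeta \mapsto -\zeta - 1/\zeta$) gives $\omega_z^R(C_R) = O(y/R)$, while the decay built into $g_\epsilon$ forces $\log|g_\epsilon| \ge -O(\epsilon R^{3/4})$ on $C_R$ once $R$ is large; hence the arc contribution is $O(\epsilon y R^{-1/4}) \to 0$. On $I_R$ the measure $d\omega_z^R$ converges to the half-plane Poisson kernel $y\,dt/(\pi((t-x)^2 + y^2))$, and dominated convergence (valid because $|\log|g_\epsilon(t)|| \ll_\epsilon 1 + |t|^{3/4}$ away from the isolated zeros of $f$, and this is integrable against $|t|^{-2}$) gives in the limit $R \to \infty$
\[
\log|g_\epsilon(z)| \le \frac{y}{\pi}\int_{-\infty}^{\infty} \frac{\log|g_\epsilon(t)|}{(t-x)^2 + y^2}\, dt.
\]

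To conclude, I would replace $\log|g_\epsilon(t)|$ by the upper bound $\log|f(t)|$ in the integrand, rewrite the left side as $\log|f(z)| - \epsilon \mathrm{Re}((-iz)^{3/4})$, and send $\epsilon \to 0^+$. I expect the main technical hurdle to be the vanishing of the arc contribution in the semi-disc limit: this is precisely what forces the multiplier exponent $3/4$ to lie strictly between the given growth exponent $1/2$ and the threshold $1$ beyond which an additional $\alpha y$ term would appear in the representation, and it is also what makes the $-N$ truncation necessary in order to accommodate isolated zeros of $f$ on the real line.
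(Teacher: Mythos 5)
Your overall route is sound and genuinely different from the paper's. The paper transplants the problem to the unit disc: it sets $g(s)=f\bigl(iy(1+s)/(1-s)+x\bigr)$, applies Jensen's inequality $\log|g(0)|\le\frac{1}{2\pi}\int_0^{2\pi}\log|g(re^{i\theta})|\,d\theta$, uses the growth hypothesis (which becomes $\log|g(s)|\ll_{x,y}|1-s|^{-1/2}$, integrable on the circle) to pass $r\to1^-$, and recovers the Poisson kernel by the substitution $t=x-y\cot(\theta/2)$. You instead exhaust the half-plane by semi-discs, use harmonic measure and the subharmonic maximum principle, and insert a Phragm\'en--Lindel\"of multiplier; this is the classical harmonic-majorant proof and it does work, with the $-N$ truncation being the right way to handle zeros of $f$. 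Note, though, that since the growth exponent $1/2$ is strictly less than $1$, the multiplier is not actually needed for the arc term: $\log|f|\le O(R^{1/2})$ on $C_R$ together with $\omega_z^R(C_R)=O(y/R)$ already makes that contribution vanish. Where boundedness of $g_\epsilon$ genuinely helps is in controlling the positive part of the boundary data on $I_R$.

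Two of your stated estimates are, however, wrong as written, and both in the same way: you assert two-sided control on $\log|g_\epsilon|$ where only an upper bound is available --- and, fortunately, only an upper bound is needed. First, on the arc you claim $\log|g_\epsilon|\ge-O(\epsilon R^{3/4})$; this is false in general, since $f$ may vanish on or arbitrarily close to $C_R$, and in any case the decay you built in is an \emph{upper} bound, $\log|g_\epsilon|\le O(R^{1/2})-c\epsilon R^{3/4}$. The repair is immediate: in $\log|g_\epsilon(z)|\le\int_{I_R}+\int_{C_R}$ you only need the arc term to have a small upper bound, and $\sup_{C_R}\log|g_\epsilon|\le0$ for large $R$ (or $\le O_\epsilon(1)$) times $\omega_z^R(C_R)=O(y/R)$ gives exactly that; a very negative arc term only strengthens the inequality, so no lower bound is required. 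Second, on $I_R$ you invoke dominated convergence via $|\log|g_\epsilon(t)||\ll_\epsilon 1+|t|^{3/4}$; the implicit lower bound is also false in general: $f(z)=\exp\bigl(-(-iz)^{9/10}\bigr)$ satisfies every hypothesis of the lemma yet $\log|f(t)|\asymp-|t|^{9/10}$, and $f$ may even vanish on a subset of $\mathbb{R}$ of positive measure, so no pointwise bound on the negative part exists. Instead split $\log|g_\epsilon|=\log^+|g_\epsilon|-\log^-|g_\epsilon|$: the positive part is bounded (this is where the multiplier earns its keep) and is handled by dominated convergence, using also that the harmonic-measure density of $S_R$ on the diameter is majorized by the half-plane Poisson kernel (compare the two Dirichlet solutions via the maximum principle); the negative part is handled by Fatou's lemma, which gives the inequality in the direction you need, namely $\limsup_{R\to\infty}\int_{I_R}\log|g_\epsilon|\,d\omega_z^R\le\frac{y}{\pi}\int_{-\infty}^{\infty}\frac{\log|g_\epsilon(t)|}{(t-x)^2+y^2}\,dt$, a possibly $-\infty$ right-hand side being harmless. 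With these two repairs your argument is complete, and the final steps (replacing $\log|g_\epsilon(t)|$ by $\log|f(t)|$ and letting $\epsilon\to0^+$) are fine as you wrote them.
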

%
%
%
%
\begin{proof}
Let $g$ be given by
\[
g(s):=f\Bigl(\frac{iy(1+s)}{1-s}+x\Bigr).
\]
By our hypotheses on $f$ we see $g$ is analytic on $|s|<1$, continuous on $\{s:|s|\le 1,s\ne 1\}$ and $g(0)\ne 0$. Thus by Jensen's formula, for any $r \in (0,1)$ we have
\[
\log|g(0)|\le \frac{1}{2\pi}\int_0^{2\pi} \log|g(re^{i\theta})|d\theta.
\]
Since $|f(s)|\le \exp (O(|s|^{1/2}))$ we have that 
\[
\log|g(s)|\ll_{x,y} \frac{1}{|1-s|^{1/2}}.
\]
Therefore by the dominated convergence theorem we deduce that
\[
\lim_{r\rightarrow 1^-}\int_0^{2\pi} \log|g(re^{i\theta})|d\theta=\int_0^{2\pi} \log|g(e^{i\theta})|d\theta,
\] 
with this integral converging absolutely. Thus we find
\begin{align*}
\log|f(x+iy)|=\log|g(0)|&\le \frac{1}{2\pi}\int_0^{2\pi}\log|g(e^{i\theta})|d\theta=\frac{y}{\pi}\int_{-\infty}^{\infty} \frac{\log|f(t)|}{(t-x)^2+y^2}dt,
\end{align*}
where in the last equality we made the change of variables 
\begin{align*}
t&=iy(1+e^{i\theta})/(1-e^{i\theta})+x=x-y\cot(\theta/2). \qedhere
\end{align*}
\end{proof}
%
%
%
%
\begin{lemma}[Refined power sum estimate]\label{lmm:Turan}
Let $A,B>0$ with $B$ sufficiently large, and $z_1,\dots,z_R$ and $c_1,\dots,c_R$ be complex numbers, such that
\begin{enumerate}
\item $z_1=0$ and $c_1=1$
\item $\textup{Im}(z_r)\ge 0$ for all $1\le r\le R$
\item $|\textup{Re}(z_r)|\le \frac{1}{10A}$ for all $1\le r\le R$.
\item If $|\textup{Im}(z_r)|\le (\log{B})^2/A$, then $|\arg(c_r)|\le 1/10$.
\item $B\ge\sum_{r=1}^R |c_r|$.
\end{enumerate}
Then there is a real $t\in[A,2A]$ such that
\[
\Bigl|\sum_{r=1}^R c_r \exp(t z_r)\Bigr|\ge B^{-99}.
\]
\end{lemma}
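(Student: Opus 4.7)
The plan is to argue by contradiction: assume $\bigl|\sum_{r=1}^R c_r \exp(tz_r)\bigr| < B^{-99}$ for every $t \in [A,2A]$, and write $F(s) := \sum_r c_r \exp(sz_r)$ for the entire function in question. The strategy has two halves. First, exhibit a point in $\mathbb{H}$ where one can prove $|F|$ is bounded below by an absolute positive constant. Second, use a subharmonic mean-value inequality to force $|F|$ to be correspondingly small at that point, contradicting the lower bound.

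For the lower bound I would take $s_0 = iA$ and split $F(iA) = \sum_r c_r \exp(-A\operatorname{Im}(z_r)) \exp(iA \operatorname{Re}(z_r))$ according to the size of $\operatorname{Im}(z_r)$. Terms with $\operatorname{Im}(z_r) \geq (\log B)^2/A$ are killed by the exponential factor $\exp(-A\operatorname{Im}(z_r)) \leq \exp(-(\log B)^2)$, contributing in total at most $B\exp(-(\log B)^2)$, which is negligible for large $B$. For the remaining terms hypothesis (4) gives $|\arg c_r| \leq 1/10$ and hypothesis (3) gives $|A\operatorname{Re}(z_r)| \leq 1/10$, so each such $c_r \exp(iAz_r)$ has argument in $[-1/5,1/5]$ and therefore contributes nonnegatively to $\operatorname{Re} F(iA)$. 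Combined with the baseline $c_1 \exp(iAz_1) = 1$, this gives $\operatorname{Re} F(iA) \geq 1 - o(1) \geq 1/2$, and in particular $\log|F(iA)| \geq -\log 2$.

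For the upper bound I would invoke Lemma~\ref{lmm:Harmonic} after a conformal change of variables. Because $|F(s)| \leq B\exp(|\operatorname{Re}(s)|/(10A))$ grows too fast for Lemma~\ref{lmm:Harmonic} to apply directly on $\overline{\mathbb{H}}$, I would fix a bounded domain $D \subset \overline{\mathbb{H}}$ containing $iA$ in its interior and $[A,2A]$ on its real-axis boundary (say a semi-disk centred on the real axis with radius a fixed large multiple of $A$), and apply Lemma~\ref{lmm:Harmonic} to $F \circ \psi$ where $\psi:\overline{\mathbb{H}} \to \overline{D}$ is a conformal map. Since $F$ is bounded on the bounded set $\overline D$, the pullback $F \circ \psi$ is bounded on $\overline{\mathbb{H}}$ and the growth hypothesis is trivially satisfied. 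On the real-axis boundary of $D$ we have $\log|F| \leq \log B + O(1)$ by the trivial bound, and $\log|F(t)| < -99\log B$ on $[A,2A]$ by the contradiction hypothesis. Pulling the resulting inequality back to $D$ gives the two-constants-type estimate
\[
\log|F(iA)| \leq p(-99\log B) + (1-p)(\log B + O(1)) = (1-100p)\log B + O(1),
\]
where $p = \omega_D([A,2A]; iA)$ denotes the harmonic measure.

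To conclude I would check that $p$ exceeds $1/100$ by an absolute amount. A direct Poisson-kernel computation in $\overline{\mathbb{H}}$ gives $\omega_{\mathbb{H}}([A,2A]; iA) = (\arctan 2 - \arctan 1)/\pi \approx 0.102$, and choosing $D$ a sufficiently large semi-disk keeps the harmonic measure within a fixed constant factor of this. The resulting bound $\log|F(iA)| \leq -c\log B + O(1)$ for some absolute $c > 0$ then contradicts $\log|F(iA)| \geq -\log 2$ once $B$ is sufficiently large. The principal technical hurdle is the linear exponential growth of $F$ in the horizontal direction, which is what forces the detour through a conformal map to a bounded region rather than a direct invocation of Lemma~\ref{lmm:Harmonic}; the exponent $99$ in the conclusion is calibrated precisely so that the harmonic-measure lower bound $p > 1/100$ suffices.
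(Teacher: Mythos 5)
Your proposal is correct, and its skeleton matches the paper's: assume $|f(t)|\le B^{-99}$ on $[A,2A]$, prove $\operatorname{Re}f(iA)\ge 1-o(1)$ by splitting at $\operatorname{Im}(z_r)\asymp(\log B)^2/A$ and using hypotheses (3)--(4) for positivity, then contradict this with a harmonic majorization in which the window $[A,2A]$ carries Poisson/harmonic-measure weight $\tfrac1\pi(\arctan 2-\arctan 1)\approx 0.10>\tfrac1{100}$ while the rest of the boundary only contributes $\log B+O(1)$; the exponent $99$ is exactly what makes these numerics close. Where you genuinely diverge is in the treatment of the horizontal exponential growth $|f(s)|\le B\exp(|\operatorname{Re}(s)|/(10A))$ that blocks a direct use of Lemma \ref{lmm:Harmonic}: the paper replaces each factor $e^{sx_r}$ by its Taylor polynomial of degree $\lfloor 3\log B\rfloor$, obtaining an auxiliary $\widetilde f$ of subexponential growth to which Lemma \ref{lmm:Harmonic} applies globally on $\mathbb H$, and then bounds the full Poisson integral over $\mathbb R$ (the range $|t|\le A(\log B)^2$ giving $\le\log B+O(\log\log B)$ and the far range $O(1)$), at the cost of carrying $O(B^{-\log\log B})$ truncation errors; you instead keep $f$ itself and localize to a semi-disk of radius $CA$, where the bound $Be^{C/10}$ is valid on the whole closed half-plane piece (because $\operatorname{Im}(z_r)\ge 0$, so the estimate holds on the arc as well as on the real axis --- worth stating explicitly, since your displayed two-constants inequality needs it on all of $\partial D\setminus[A,2A]$), and apply a two-constants/harmonic-measure argument, either via the conformal pullback of Lemma \ref{lmm:Harmonic} (using Carath\'eodory boundary extension) or directly from the maximum principle for the subharmonic function $\log|f|$ on the bounded Jordan domain. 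Your remaining quantitative point also checks out: by the maximum principle, $\omega_D([A,2A];iA)\ge \omega_{\mathbb H}([A,2A];iA)-\omega_D(\mathrm{arc};iA)$, and the arc's harmonic measure at $iA$ is $O(1/C)$, so a fixed large $C$ keeps $p$ comfortably above $1/100$. In short, the paper's truncation keeps the argument global and fully explicit, while your localization avoids Taylor bookkeeping at the price of conformal-map boundary regularity (or an appeal to the two-constants theorem) and the small harmonic-measure comparison in the semi-disk; both are sound.
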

%
%
%
%
\begin{remark}
The reader should have in mind the case when all the $z_r=i\theta_r$ are purely imaginary, when $c_r=1$ for all $r$ and $B=R>A$. In this case Lemma \ref{lmm:Turan} gives
\[
\sup_{t\in [A,2A]}\Bigl|\sum_{r=1}^{R}\exp(i t \theta_r)\Bigr|\gg R^{-99}.
\]
Standard Tur\'an power sum methods (see, for example \cite[Chapter 5, Theorem 1]{Mon1994}) would show that there is an integer $t\in[A,A+R]$ such that the sum is at least $\exp(- c R )$ for some constant $c>0$. In this case we obtain an exponential improvement in the lower bound and a tighter bound on $t$ by exploiting positivity of the coefficients and the fact that we consider $t$ real. Other variants of Tur\'an's method do exploit positivity of the coefficients (see \cite[Chapter 5, \S  6]{Mon1994}), but we are not aware of any results which allow us to restrict the range of $t$ to a short interval, which is vital for our applications.
\end{remark}
%
%
%
%
\begin{remark}\label{rmk:Turan}
Several of the conditions in Lemma \ref{lmm:Turan} which appear artificial at first sight are in fact necessary in some form.
\begin{enumerate}
\item The relaxation to real values of $t$ is important. If $z_r=2\pi i(r-1)/R$ and $c_r=1$ for all $r\le R$, then the sum vanishes for all integers $t\in(0,R)$.  
\item The positivity condition that $|\arg(c_r)|\le 1/10$ if $|z_r|\le (\log{B})^2/A$ is important. If we allow $c_r\in\{-1,1\}$ then by choosing $z_r=i\theta_r$ and $c_r$ such that 
\[
\sum_{r=1}^{2^k}c_r \exp(it\theta_r)=\Bigl(1-e\Bigl(\frac{t}{20A^2}\Bigr)\Bigr)^k,
\]
 we see that the sum has size $\leq (2^k)^{-\log A}$ for all $t\in [A,2A]$.
\item The discreteness condition that $c_1=1$ and $\text{Im}(z_r)\ge \text{Im}(z_1)$ is important. If we choose a smooth non-negative function $f$ supported on $[0,1]$, $c_r=f(r/R)$ and $z_r=\pi i r/(2A)$, then we see that by Poisson summation
\[
\sum_{r=1}^R c_r \exp(t z_r)=\sum_{r\in\mathbb{Z}}f\Bigl(\frac{r}{R}\Bigr)e\Bigl(\frac{rt}{4A}\Bigr)=R\sum_{\ell\in\mathbb{Z}}\hat{f}\Bigl(R\Bigl(\ell-\frac{t}{4A}\Bigr)\Bigr).
\]
Since $-t/(4A)$ is always at least 1/4 away from any integer for $t\in [A,2A]$, and $\hat{f}(\xi)$ has rapid decay since $f$ is smooth, we see that the sum is $O_j(R^{-j})$ for every $j\in \mathbb{Z}_{>0}$.
\item There are choices of $c_1,\ldots,c_R$ and $z_1,\dots,z_R$ satisfying the hypotheses of Lemma \ref{lmm:Turan} such that the sum is genuinely polynomially small for all $t\in[A,2A]$. Consider first
\[
e^{i\frac{4\pi t}{3A}}+3e^{i\frac{2\pi t}{3A}}+4+3e^{-i\frac{2\pi t}{3A}}+e^{-i\frac{4\pi t}{3A}}=|e^{i\frac{\pi t}{3A}}+e^{-i\frac{\pi t}{3A}}|^4-|e^{i\frac{\pi t}{3A}}+e^{-i\frac{\pi t}{3A}}|^2.
\]
For $t\in [A,2A]$ this is of the form $u^4-u^2$ for some real $|u|\le 1$, and hence takes values in $[-1/4,0]$. Thus if we take $R=12^{k}$ and choose $c_r=1$, $z_r=i\theta_r$ such that
\[
\sum_{r=1}^R \exp(i t\theta_r)=\Bigl(e^{i\frac{4\pi t}{3A}}+3e^{i\frac{2\pi t}{3A}}+4+3e^{-i\frac{2\pi t}{3A}}+e^{-i\frac{4\pi t}{3A}}\Bigr)^k,
\]
we see that the sum is $\leq 4^{-k}=R^{-\log{4}/\log{12}}=R^{-0.55\dots}$ for all $t\in [A,2A]$. This example was shown to us by Jean Bourgain.
\end{enumerate}
\end{remark}
%
%
%
%
\begin{remark}
A version of Lemma \ref{lmm:Turan} was independently shown by Jean Bourgain (private communication to the first author), with essentially the same proof. Will Sawin \cite{SawMO} also independently proved a version of Lemma \ref{lmm:Turan}, again with fundamentally the same proof. It seems plausible a version of this lemma has existed in the literature for some time; Terence Tao pointed out to us that this can be viewed as a consequence of a quantified version of the F. and M. Riesz Theorem. 
\end{remark}
%
%
%
%
\begin{proof}[Proof of Lemma \ref{lmm:Turan}]
Let $f:\mathbb{C}\rightarrow\mathbb{C}$ be defined by
\[
f(s):=\sum_{r=1}^R c_r e^{s z_r}.
\]
Assume for a contradiction that $|f(t)|\le B^{-99}$ for all $t\in [A,2A]$.

Let $z_r=x_r+iy_r$. We wish to apply Lemma \ref{lmm:Harmonic}, but if $x_r\ne 0$ for some $r$ in the sum then $f$ can have exponential growth, so we first take a Taylor series approximation. Since $|x_r|\le 1/(10A)$, we see that uniformly for $|s|\le 2A$
\[
e^{s x_r}=\sum_{j=0}^J\frac{s^j x_r^j}{j!}+O(B^{-2\log\log{B}}),
\]
where 
\[
J:=\lfloor 3\log{B}\rfloor.
\]
We therefore define $\widetilde{f}:\mathbb{C}\rightarrow\mathbb{C}$ by
\[
\widetilde{f}(s):=\sum_{r=1}^R c_r e^{i s y_r}\sum_{j=0}^J\frac{s^j x_r^j}{j!}.
\]
Since $\sum_{r}|c_r|\le B$, we have that for $s \in \mathbb{H}$ with $|s|\le 2A$
\begin{equation}
\widetilde{f}(s)=f(s)+O(B^{-\log\log{B}}).
\label{eq:TildeFApprox}
\end{equation}
Since we sum over a finite set, $\widetilde{f}$ is analytic on $\mathbb{C}$. Since $y_r\ge 0$ for all $r$, $\text{Re}(isy_r)\le 0$ for $s\in \mathbb{H}$, so
\begin{equation}
\widetilde{f}(s)\ll_{A,B} 1+ |s|^J\ll_{A,B} \exp(|s|^{1/2})
\label{eq:TildeFGrowth}
\end{equation}
 on $\mathbb{H}$. In particular, we may apply Lemma \ref{lmm:Harmonic} to $\widetilde{f}$ and find
\begin{equation}
\log|\widetilde{f}(i A)|\le \frac{A}{\pi}\int_{-\infty}^{\infty}\frac{\log{|\widetilde{f}(t)|}}{t^2+A^2}dt.
\label{eq:TildeFHarmonicBound}
\end{equation}
We wish to obtain a contradiction by obtaining a lower bound for $\log|\widetilde{f}(iA)|$ and an upper bound for the integral. First we note that since $B$ is large
\begin{align}
\widetilde{f}(i A)&=f(i A)+O(B^{-200})=\sum_{r=1}^R c_r \exp(-A y_r)\exp(i A x_r)+O(B^{-200}).
\label{eq:iAexpansion}
\end{align}
 We see that if $y_r>(\log{B})^2/(2A)$, then 
\[
c_r \exp(-A y_r)\exp(i A x_r)\ll B\exp(-(\log{B})^2/2)\le B^{-201}.
\]
Therefore the contribution of $r$ with $y_r>(\log{B})^2/(2A)$ to \eqref{eq:iAexpansion} is negligible. By assumption of the lemma, if $y_r\le (\log{B})^2/(2A)$ then $|\arg(c_r)|\le 1/10$. Since $|x_r|\le 1/(10A)$, we also have that
\[
|\arg(\exp(i A x_r))|\le \frac{1}{10}.
\]
In particular, for all $r$ such that $y_r\le (\log{B})^2/(2A)$ we have
\[
\text{Re}( c_r \exp(-Ay_r)\exp(i A x_r)  )\ge 0.
\]
Therefore, by taking real parts of \eqref{eq:iAexpansion} and separating the term $z_1=0$ (which contributes $1$) and dropping other terms with $y_r\le (\log{B})^2/(2A)$ for a lower bound, we find
\begin{equation}
|\widetilde{f}(i A)|\ge \text{Re}(\widetilde{f}(i A))\ge 1-O(B^{-200}).
\label{eq:TildeFLowerBound}
\end{equation}
We now consider the integral on the right hand side of \eqref{eq:TildeFHarmonicBound}. By assumption, $\log|f(t)|\le -99\log{B}$ for $t\in[A,2A]$. Therefore by \eqref{eq:TildeFApprox} $\log|\widetilde{f}(t)|\le -99\log{B}+O(1)$ on $[A,2A]$, and so
\begin{equation}
\frac{A}{\pi}\int_{A}^{2A}\frac{\log{|\widetilde{f}(t)|}}{t^2+A^2}dt\le \frac{-99\log{B}}{5\pi}+O(1).\label{eq:TildeFInt1}
\end{equation}
Since $|x_r|\le 1/(10A)$ for all $r$, on the real axis we have the bound
\begin{equation}
\widetilde{f}(x)\ll B+B\sup_{1\le j\le J}\Bigl(\frac{|x|}{j A}\Bigr)^j\ll 
\begin{cases}
B\exp\left(\displaystyle\frac{|x|}{A}\right),& |x|\le A(\log{B})^2,\\
\left(\displaystyle\frac{|x|}{A}\right)^{3\log{B}},&|x|>A(\log{B})^2.
\end{cases}
\label{eq:TildeFUpperBound}
\end{equation}
These bounds give
\begin{align}
\frac{A}{\pi}\int_{\substack{|t|\le A(\log{B})^2\\ t\notin [A,2A]}}\frac{\log{|\widetilde{f}(t)|}}{t^2+A^2}dt&\le \log{B}+O(\log\log{B}),\label{eq:TildeFInt2}\\
\frac{A}{\pi}\int_{|t|>A(\log{B})^2}\frac{\log{|\widetilde{f}(t)|}}{t^2+A^2}dt&\le O(1).\label{eq:TildeFInt3}
\end{align}
Putting together \eqref{eq:TildeFHarmonicBound}, \eqref{eq:TildeFInt1}, \eqref{eq:TildeFInt2} and \eqref{eq:TildeFInt3} gives
\[
\log|\widetilde{f}(i A)|\le\frac{A}{\pi}\int_{-\infty}^{\infty}\frac{\log{|\widetilde{f}(t)|}}{t^2+A^2}dt\le -\log{B}.
\] 
But this contradicts our lower bound \eqref{eq:TildeFLowerBound} that $|\widetilde{f}(i A)|\ge 1-O(B^{-100})$. Thus there must be some $t\in [A,2A]$ such that $|f(t)|\ge B^{-99}$, giving the result.
\end{proof}
%
%
%
%
\begin{lemma}[Large values of the zero sum]\label{lmm:ZeroSum}
Let $T$ and $Y$ be sufficiently large and satisfy
\[
(\log\log{T})^{3}\le \log{Y}\le 100\log{T}.
\]
Let $W:\mathbb{C}\rightarrow\mathbb{C}$ be analytic with $W(0)=1$ and $|W(s)|,|W'(s)|\ll \min(1,|s|^{-2})$ for $\textup{Re}(s)\in[-1,1]$.

Let $\rho_0=\beta_0+i\gamma_0$ be a zero of $\zeta(s)$ with $\gamma_0\in[T,2T]$. Define
\begin{align*}
\mathcal{S}_{\approx}(\rho_0)&:=\Bigl\{\rho=\beta+i\gamma:\,\zeta(\rho)=0,\,\gamma_0\le \gamma\le \gamma_0+(\log{T})^2,\, |\beta-\beta_0|\le \frac{1}{10\log{Y}}\Bigr\},\\
\mathcal{V}&:=[Y,Y^2]\cap \Bigl\{\Bigl(1+\frac{1}{(\log{T})^{150}}\Bigr)^j:\,j\in\mathbb{Z}\Bigr\}.
\end{align*}
Then  there is a $Z\in\mathcal{V}$ such that
\[
\Bigl|\sum_{\rho\in \mathcal{S}_{\approx}(\rho_0)}W(\rho-\rho_0)Z^{(\rho-\rho_0)}\Bigr|\ge \frac{1}{(\log{T})^{100}}.
\]
\end{lemma}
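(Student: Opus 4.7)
The plan is to reduce everything to Lemma \ref{lmm:Turan} via the substitution $Z=e^t$. Enumerate $\mathcal{S}_\approx(\rho_0)=\{\rho_1,\dots,\rho_R\}$ with $\rho_1=\rho_0$, and set $z_r:=\rho_r-\rho_0$ and $c_r:=W(\rho_r-\rho_0)$. The quantity to be estimated is the power sum $f(t):=\sum_{r=1}^R c_r e^{tz_r}$ evaluated at $t=\log Z$. The strategy is to apply Lemma \ref{lmm:Turan} with $A:=\log Y$ and $B\asymp\log T$ to obtain a real $t\in[\log Y,2\log Y]$ with $|f(t)|$ large, and then to round $e^t$ to an element $Z\in\mathcal{V}$.

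Four of the five hypotheses of Lemma \ref{lmm:Turan} are immediate from the definitions: $z_1=0$ and $c_1=W(0)=1$; $\text{Im}(z_r)=\gamma_r-\gamma_0\geq 0$ by the definition of $\mathcal{S}_\approx(\rho_0)$; and $|\text{Re}(z_r)|=|\beta_r-\beta_0|\leq 1/(10\log Y)=1/(10A)$. For the bound $B\geq\sum_r|c_r|$, I would partition the zeros by the integer part $k$ of $\gamma_r-\gamma_0\in[0,(\log T)^2]$. The standard count $N(\gamma_0+k+1)-N(\gamma_0+k)\ll\log T$ contributes $O(\log T)$ zeros per unit slice, while on the $k$-th slice $|W(z_r)|\ll\min(1,|z_r|^{-2})\ll\min(1,k^{-2})$ since $|\text{Re}(z_r)|$ is tiny and $|\text{Im}(z_r)|\geq k$; summing in $k$ yields $\sum_r|c_r|\ll\log T$, so $B=C_1\log T$ is admissible. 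The fifth and most delicate hypothesis is the positivity condition $|\arg(c_r)|\leq 1/10$ whenever $|\text{Im}(z_r)|\leq(\log B)^2/A$. Here the assumption $\log Y\geq(\log\log T)^3$ enters crucially: it forces $(\log B)^2/A\ll(\log\log T)^2/(\log\log T)^3=o(1)$, so the condition is only non-vacuous when $|z_r|=o(1)$, and for such $z_r$ the bound $|W'|\ll 1$ on a neighbourhood of $0$ together with $W(0)=1$ gives $c_r=1+o(1)$ and hence $|\arg(c_r)|\leq 1/10$ for $T$ large. I expect this verification to be the main conceptual step: it is exactly where the lower bound on $\log Y$ pays for itself.

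With all hypotheses in hand, Lemma \ref{lmm:Turan} produces $t\in[\log Y,2\log Y]$ with $|f(t)|\geq B^{-99}=(C_1\log T)^{-99}\geq 2(\log T)^{-100}$ for $T$ sufficiently large. Setting $Z_0:=e^t\in[Y,Y^2]$, I pick a nearest element $Z\in\mathcal{V}$; since elements of $\mathcal{V}$ are spaced by factors of $1+(\log T)^{-150}$, we have $|Z-Z_0|\leq Z_0(\log T)^{-150}$ and hence $|\log Z-t|\leq(\log T)^{-150}$. To control the rounding error I use $|z_r c_r|=|z_rW(z_r)|\leq\min(|z_r|,|z_r|^{-1})\leq 1$ together with $|e^{tz_r}|=Z^{\text{Re}(z_r)}\leq Y^{2/(10\log Y)}=e^{1/5}$, so $|f'(t)|\ll R\ll(\log T)^3$. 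The mean value theorem now gives $|f(t)-f(\log Z)|\ll(\log T)^{-147}$, and the triangle inequality upgrades the earlier bound to $|f(\log Z)|\geq(\log T)^{-100}$, which is exactly the required inequality at $Z\in\mathcal{V}$.
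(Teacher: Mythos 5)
Your proposal is correct and follows essentially the same route as the paper: the same reduction to Lemma \ref{lmm:Turan} with $z_r=\rho_r-\rho_0$, $c_r=W(\rho_r-\rho_0)$, $A=\log Y$, $B\asymp\log T$, the same use of $\log Y\ge(\log\log T)^3$ to verify the positivity hypothesis, and the same derivative-plus-rounding step to pass from $t\in[\log Y,2\log Y]$ to a point of $\mathcal{V}$. The only differences (unit-interval slicing for $\sum_r|c_r|\ll\log T$, and bounding $|f'|$ via $|z_rW(z_r)|\ll 1$ and $R\ll(\log T)^3$ rather than via $(\log T)^2\sum_\rho\min(1,|\rho-\rho_0|^{-2})$) are cosmetic.
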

%
%
%
%
\begin{proof}
Let $f:\mathbb{R}\rightarrow\mathbb{C}$ be defined by
\[
f(t):=\sum_{\rho\in \mathcal{S}_{\approx}(\rho_0)}W(\rho-\rho_0)e^{t(\rho-\rho_0)}.
\]
Let the elements of $\mathcal{S}_{\approx}(\rho_0)$ be $\rho_1,\dots,\rho_R$, with $\rho_1=\rho_0$, and let $z_r:=\rho_r-\rho_0$, $c_r:=W(\rho_r-\rho_0)$ and $A:=\log{Y}$. Then certainly $z_1=0$, $\text{Im}(z_r)\ge 0$ and $|\text{Re}(z_r)|\le 1/(10A)$ by the definition of $\mathcal{S}_{\approx}(\rho_0)$. By the decay of $W$, we see that
\[
\sum_{\rho\in \mathcal{S}_{\approx}(\rho_0)}|W(\rho-\rho_0)|\ll \sum_{\rho:\,\zeta(\rho)=0}\min\Bigl(1,\frac{1}{|\rho-\rho_0|^2}\Bigr)\ll \log{T}.
\]
Therefore we can choose $B\asymp \log{T}$ such that $\sum_{r=1}^R|c_r|\le B$. Finally, since $\log{Y}\ge (\log\log{T})^3$, we see that if $|s|\ll (\log{B})^2/A \ll 1/\log \log T$ then
\[
W(s)=1+O\Bigl(\frac{1}{\log\log{T}}\Bigr),
\]
and so for $T$ sufficiently large we have $|\arg(c_r)|\le 1/10$ whenever $|\text{Im}(z_r)|\le (\log{B})^2/A$. Therefore all the conditions of Lemma \ref{lmm:Turan} are satisfied, and so there is a $t_0\in[\log{Y},2\log{Y}]$ such that
\[
|f(t_0)|\gg \frac{1}{(\log{T})^{99}}.
\]
By definition, if $\rho\in \mathcal{S}_{\approx}(\rho_0)$ then $|\rho-\rho_0|\le (\log{T})^2$. Therefore, if $t\leq 3 \log Y$ then $|e^{t(\rho-\rho_0)}| \ll 1$ for all $\rho \in \mathcal{S}_\approx(\rho_0)$ and
\begin{align*}
|f'(t)|\ll \sum_{\rho\in \mathcal{S}_{\approx}(\rho_0)}\Bigl|W(\rho-\rho_0)(\rho-\rho_0)e^{t(\rho-\rho_0)}\Bigr|\ll (\log T)^2 \sum_{\rho : \zeta(\rho) = 0} \min \left(1, \frac{1}{|\rho-\rho_0|^2}\right) \ll (\log T)^3.
\end{align*}
We recall that $|f(t_0)|\gg (\log{T})^{-99}$. By our derivative bound we see that $|f(t)|\ge (\log{T})^{-100}$ whenever $|t-t_0|\le (\log{T})^{-110}$. Since $\log(1+(\log{T})^{-150})\le (\log{T})^{-150}$, there is a $Z\in \mathcal{V}$ such that $|\log{Z}-t_0|\le (\log{T})^{-120}$. This gives the result.
\end{proof}
%
%
%
%
\begin{proposition}[Short zero detecting polynomial for $Y$-half-isolated zeros]\label{prp:HalfIsolated}
Let $T$ be sufficiently large. Then there is a set $\mathcal{U}=\mathcal{U}(T)$ with $\mathcal{U}\subseteq [\exp((\log\log{T})^3),T^2]$ and $\#\mathcal{U}\le (\log{T})^{152}$ such that the following holds.

Let $w_0$ be the smooth function of Lemma \ref{lem:Partition}. Let $(\log \log T)^3 \leq \log Y \leq \log T$. For any $Y$-half-isolated zero $\rho_0=\beta_0+i\gamma_0$ with $\gamma_0\in[T,2T]$, there is a choice of $U\in\mathcal{U} \cap [Y,Y^2]$ such that 
\[
\Bigl|\sum_{n}\frac{\Lambda(n)}{n^{\rho_0}}w_0\Bigl(\frac{n}{U}\Bigr)\Bigr|\gg (\log{T})^{-100}.
\]
\end{proposition}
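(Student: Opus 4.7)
The plan is to apply Mellin inversion to $S_U(\rho_0) := \sum_n \Lambda(n) n^{-\rho_0} w_0(n/U)$, shift the contour to collect residues from zeros of $\zeta$, and isolate a principal sum over nearby zeros which Lemma \ref{lmm:ZeroSum} will then control by a judicious choice of $U$. Concretely, I would write
\[
S_U(\rho_0) = \frac{1}{2\pi i}\int_{(c)} \left(-\frac{\zeta'}{\zeta}(\rho_0+s)\right) W_0(s) U^s\, ds
\]
for $c$ with $\text{Re}(\rho_0)+c > 1$ and shift the contour to $\text{Re}(s) = -1/2$. This yields three contributions: a residue at $s = 1-\rho_0$ from $\zeta$'s pole, of magnitude at most $|W_0(1-\rho_0)| U^{1-\beta_0}$ and hence negligible due to the super-polynomial vertical decay of $W_0$; the residual integral on $\text{Re}(s) = -1/2$, controlled by standard bounds on $\zeta'/\zeta$ in the critical strip combined with the factor $U^{-1/2}$ and the vertical decay of $W_0$; and a residue sum $-\sum_\rho W_0(\rho-\rho_0) U^{\rho-\rho_0}$ over non-trivial zeros $\rho$ with $\text{Re}(\rho) > \beta_0 - 1/2$.

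Next, I would split this residue sum by proximity to $\rho_0$. Zeros with $|\rho - \rho_0| > (\log T)^2$ are handled by the super-polynomial decay of $W_0$, the classical $O(\log T)$ bound for zeros per unit-height interval, and $|U^{\rho-\rho_0}| \le U$, together contributing much less than $(\log T)^{-100}$. For zeros with $|\rho-\rho_0| \le (\log T)^2$, the $Y$-half-isolation hypothesis forces each $\rho$ into one of two categories: either $\rho \in \mathcal{S}_\approx(\rho_0)$, the ``good'' zeros lying above $\rho_0$ with real part within $1/(10\log Y)$ of $\beta_0$; or $\beta \le \beta_0 - (\log\log T)^2/\log Y$, the ``far-left'' zeros. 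For a far-left zero, $U \ge Y$ gives $|U^{\rho-\rho_0}| = U^{\beta-\beta_0} \le Y^{-(\log\log T)^2/\log Y} = \exp(-(\log\log T)^2)$, and summing over the $O(\log T)$ such zeros is still negligible compared with $(\log T)^{-100}$. Thus $S_U(\rho_0)$ agrees, up to an error of this order, with $-\sum_{\rho \in \mathcal{S}_\approx(\rho_0)} W_0(\rho-\rho_0) U^{\rho-\rho_0}$.

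Finally, I would apply Lemma \ref{lmm:ZeroSum} with $W = W_0/W_0(0)$ (absorbing the constant $W_0(0)$ into the implied constant) and with the parameter $Y$ from the half-isolation hypothesis to produce some $Z$ in the discrete set $\mathcal{V}(Y) \subseteq [Y, Y^2]$ where the above sum has magnitude at least $(\log T)^{-100}$; setting $U = Z$ yields the claimed bound on $|S_U(\rho_0)|$. To obtain a single $\rho_0$- and $Y$-independent set $\mathcal{U}$, I take $\mathcal{U} := [\exp((\log\log T)^3), T^2] \cap \{(1 + (\log T)^{-150})^j : j \in \mathbb{Z}\}$, which contains $\mathcal{V}(Y)$ for every admissible $Y$ and has cardinality at most $(\log T)^{152}$. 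The main obstacle, as I see it, is the clean error accounting: each of the pole contribution, the residual line integral, the faraway zeros, and especially the far-left zeros must be pinned down to be individually much smaller than the target $(\log T)^{-100}$. The half-isolation definition is engineered precisely so that Criterion (1) delivers the near-positivity hypothesis (condition (4)) required by Lemma \ref{lmm:Turan} inside Lemma \ref{lmm:ZeroSum}, while Criterion (2) pushes all competing nearby zeros far enough to the left that the factor $U^{\beta-\beta_0}$ renders them negligible whenever $U \ge Y$.
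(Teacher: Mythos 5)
Your proposal follows the paper's own argument almost verbatim: the same detector $S_U(\rho_0)$, the same Mellin inversion and contour shift, the same discarding of the pole at $s=1-\rho_0$, of vertically distant zeros, and of ``far-left'' zeros using $U\ge Y$ together with criterion (2) of $Y$-half-isolation, the same identification of the surviving zeros with $\mathcal{S}_{\approx}(\rho_0)$ via criterion (1), the same application of Lemma \ref{lmm:ZeroSum} with $W=W_0/W_0(0)$, and the identical grid $\mathcal{U}=[\exp((\log\log T)^3),T^2]\cap\{(1+(\log T)^{-150})^j\}$. However, one step fails as written: the shift to $\text{Re}(s)=-1/2$. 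On that line the argument of $\zeta'/\zeta$ is $\rho_0+s$ with real part $\beta_0-\tfrac12\in[0,\tfrac12]$, i.e.\ your new line of integration lies inside the closed critical strip. There is no ``standard bound on $\zeta'/\zeta$ in the critical strip'' of pointwise type there: $\zeta'/\zeta(\rho_0+s)$ has poles at $s=\rho-\rho_0$ for every zero $\rho$, and nothing rules out zeros lying on, or arbitrarily close to, the line $\text{Re}(z)=\beta_0-\tfrac12$ when $\beta_0>\tfrac12$ (the case of interest); in that situation the shifted integral is not even well defined, let alone bounded by $U^{-1/2}(\log T)^{O(1)}$. The repair is exactly the paper's choice: shift to $\text{Re}(s)=-2$, so that $\text{Re}(\rho_0+s)$ lies strictly between $-2$ and $-1$, where $|\zeta'/\zeta(\rho_0+s)|\ll\log(T+|s|)$ is a genuinely standard estimate. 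The residue sum then runs over all non-trivial zeros, and the extra zeros with $\beta\le\beta_0-\tfrac12$ that your contour omitted are harmless, since they are far-left zeros killed by $U^{\beta-\beta_0}\le U^{-1/2}$.

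A secondary bookkeeping point: when discarding zeros with $|\rho-\rho_0|>(\log T)^2$ you bound $|U^{\rho-\rho_0}|\le U$, which can be as large as $T^2$, while at the cutoff the decay of $W_0$ from Lemma \ref{lem:Partition} only gives roughly $\exp(-\sqrt{|\gamma-\gamma_0|/2})\approx T^{-1/\sqrt{2}}$, so the inequality as stated does not close. After reducing to $\beta_0\ge\tfrac12$ (as the paper does) one should use $U^{\beta-\beta_0}\le U^{1/2}\le Y$, and for the values of $Y$ that actually arise ($Y=T^{o(1)}$ in the definition of a half-isolated zero, and $Y\le T^{1/2}$ in Lemma \ref{lem:GoodLengthForHIZeros}) the superpolynomial decay of $W_0$, together with the extra $|\rho-\rho_0|^{-2}$ factor and the density bound $N(t+1)-N(t)\ll\log t$, then makes this tail comfortably smaller than $(\log T)^{-200}$. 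With the contour moved outside the critical strip and this accounting tightened, your argument is the paper's proof of Proposition \ref{prp:HalfIsolated}.
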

%
%
%
%
\begin{proof}
We choose the set $\mathcal{U}$ to be
\[
\mathcal{U}:=[\exp((\log\log{T})^3),T^2]\cap\Bigl\{\Bigl(1+\frac{1}{(\log{T})^{150}}\Bigr)^j:\,j\in\mathbb{Z}\Bigr\}.
\]
Then it is clear that $\#\mathcal{U}\le (\log{T})^{152}$, so it remains to show that for every $Y$-half-isolated zero $\rho_0$ there is a suitable Dirichlet polynomial in terms of some $U\in\mathcal{U}$.

 Let $\rho_0=\beta_0+i\gamma_0$ be a $Y$-half-isolated zero of $\zeta(s)$ with $\gamma_0\in [T,2T]$ and $\beta_0\ge 1/2$. Then all zeros $\rho=\beta+i\gamma$ with $|\rho-\rho_0|\le (\log{T})^2$ satisfy either $\beta\le \beta_0-(\log\log{T})^2/\log{Y}$ or $\gamma\ge \gamma_0$ and $|\beta-\beta_0|\le 1/(10\log{Y})$. Given such a $Y$, we note that the set $\mathcal{V}$ defined in Lemma \ref{lmm:ZeroSum} is a subset of $\mathcal{U}$.

 We consider the function $S:\mathbb{R}\rightarrow\mathbb{C}$, given by
\begin{align*}
S(U) := \sum_n \frac{\Lambda(n)}{n^{\rho_0}} w_0\Bigl(\frac{n}{U} \Bigr)
\end{align*}
for $U \in [Y,Y^2]$. We rewrite $w_0$ using Mellin inversion, giving
\begin{align*}
S(U) = \frac{1}{2\pi i}\int_{(1)} U^s W_0(s)\Big[ -\frac{\zeta'}{\zeta}(\rho_0 + s)\Big] ds.
\end{align*}
We shift the line of integration to $\text{Re}(s) = -2$, picking up contributions from simple poles at $s = 1-\rho_0$ and $s = \rho-\rho_0$, where $\rho$ ranges over non-trivial zeros of the zeta function. It follows that
\begin{align*}
S(U) &= U^{1-\rho_0}W_0(1-\rho_0) - \sum_{\rho}U^{\rho - \rho_0} W_0(\rho-\rho_0) + \frac{1}{2\pi i}\int_{(-2)} U^s W_0(s)\Big[ -\frac{\zeta'}{\zeta}(\rho_0 + s)\Big] ds.
\end{align*}
Since $\text{Im}(\rho_0)\geq T$ and $U\le Y^2\le T^4$, by the rapid decay of $W_0(s)$ (see Lemma \ref{lem:Partition}) we have $U^{1-\rho_0}W_0(1-\rho_0)\ll T^{-2}$. Similarly, inside the integral $W_0(s)\ll |s|^{-2}$ and 
\begin{align*}
\Bigl|\frac{\zeta'}{\zeta}(z + s) \Bigr| \ll \log (T + |s|),
\end{align*}
so (since $U\ge \log T$) the integral contributes $O(U^{-1})$ and we have
\begin{align*}
S(U) &= - \sum_{\rho}U^{\rho - \rho_0} W_0(\rho-\rho_0) + O(U^{-1}).
\end{align*}
We wish to simplify the sum over zeros $\rho$. We first note that if $|\rho-\rho_0|>(\log{T})^2/2$ then $W_0(\rho-\rho_0)\ll T^{-1/100}|\rho-\rho_0|^{-2}$ by Lemma \ref{lem:Partition}, and so these terms are negligible, giving
\[
S(U) = - \sum_{\substack{\rho\\ |\rho-\rho_0|\le (\log{T})^2/2}}U^{\rho - \rho_0} W_0(\rho-\rho_0) + O(\exp(-(\log \log T)^{10})).
\]
We now separate the contribution from zeros $\rho$ such that $\text{Re}(\rho)\le \text{Re}(\rho_0)-(\log\log{T})^2/(2\log{Y})$. Since $U\ge Y$, we see that these terms contribute
\[
\ll \exp\Bigl(-(\log\log{T})^2/2\Bigr)\sum_{\substack{\rho\\ |\rho-\rho_0|\le (\log{T})^2/2}} |W_0(\rho-\rho_0)|\ll (\log{T})^{-200}.
\]
Thus we have (noting that $U^{-1}\ll Y^{-1}\ll (\log{T})^{-200}$)
\[
S(U) = - \sum_{\substack{\rho\\ |\rho-\rho_0|\le (\log{T})^2/2\\ \text{Re}(\rho)\ge \text{Re}(\rho_0)-(\log\log{T})^2/(2\log{Y})}}U^{\rho - \rho_0} W_0(\rho-\rho_0) + O\Bigl((\log T)^{-200}\Bigr).
\]
We now use the assumption that $\rho_0$ is a $Y$-half-isolated zero to see that all the zeros $\rho=\beta+i\gamma$ appearing in the sum must satisfy
\begin{align*}
|\beta-\beta_0|\leq \frac{1}{10\log{Y}},\qquad \text{and} \qquad \gamma\ge\gamma_0.
\end{align*}
Reintroducing some negligible terms if necessary, we then see that
\[
S(U) = - (\log 2)\sum_{\substack{\rho=\beta+i\gamma\\ \gamma_0\le \gamma \le \gamma_0+(\log{T})^2\\ |\beta-\beta_0|\le 1/(10\log{Y})}}U^{\rho - \rho_0} \frac{W_0(\rho-\rho_0)}{\log 2} + O\Bigl(\frac{1}{(\log{T})^{200}}\Bigr).
\]
Lemma \ref{lem:Partition} gives $W_0(0)=\log 2$, so Lemma \ref{lmm:ZeroSum} shows that there is a choice of $U\in\mathcal{V}$ such that $S(U)\gg (\log{T})^{-100}$. Since $\mathcal{V}\subseteq\mathcal{U}$, this gives the result.
\end{proof}
%
%
%
%
Theorem \ref{thm:HalfIsolated} is a consequence of Proposition \ref{prp:HalfIsolated}, since every half-isolated zero with ordinate in $[T,2T]$ is $Y$-half-isolated for some $Y \in [\exp((\log \log T)^3), T^{2/\log \log T}]$.
%
%
%
%
\section{Zero-density estimates}\label{sec:ZeroDensity}
%
%
%
%
In this section we introduce the key results we need to prove Theorem \ref{thm:ZeroDensity}. We begin with some notation and definitions. The first definition is the notion of Type I zeros in the theory of density estimates.
%
%
%
%
\begin{definition}
Let $\rho = \beta+i\gamma$ be a non-trivial zero of the zeta function with $\gamma \in [T,2T]$. We say that $\rho$ is `detected by a Dirichlet polynomial of length $N$', if
\begin{align}\label{eq:NDetector}
|D_N(\rho)| \ge \frac{1}{3\log T},
\end{align}
where
\begin{align}
D_N(s)&:=\sum_{n\sim N}\frac{a(n)}{n^s}\exp \Bigl( -\frac{n}{T^{1/2}}\Bigr),\label{eq:DNDef}\\
a(n)& := \sum_{\substack{d \mid n \\ d\leq 2T^{1/100} }} \mu(d).
\label{eq:anDef}
\end{align}
\end{definition}
%
%
%
%
Next, we need to define a cluster of zeros, as mentioned in the introduction.
%
%
%
%
\begin{definition}\label{dfntn:Cluster}
A set $ \{\rho_1 , \cdots , \rho_r\}$ of non-trivial zeros of $\zeta(s)$ with imaginary parts in $[T,2T]$ is a \emph{cluster} if
\begin{enumerate}
\item The zeros lie on the same vertical line: There is a $\sigma\in[0,1]$ such that
\[
\text{Re}(\rho_j)=\sigma\qquad (1\le j\le r).
\]
\item The imaginary parts are ordered and increase by at most $(\log{T})^3$: We have
\[
\text{Im}(\rho_j)<\text{Im}(\rho_{j+1})<\text{Im}(\rho_j)+(\log{T})^3\qquad (1\le j<r).
\]
\end{enumerate}
\end{definition}
%
%
%
%
We remark that the zeros in a cluster are taken without multiplicity. Indeed, the multiplicity of zeros has no import in our arguments, since the bound $N(T+1)-N(T)\ll \log T$ means multiplicities only contribute harmless logarithmic factors.
%
%
%
%

Furthermore:
\begin{itemize}
\item  We say that two clusters $\mathcal{C}_1,\mathcal{C}_2$ are \emph{separated} if $|\text{Im}(t_1)-\text{Im}(t_2)| \geq (\log T)^3$ or if $\text{Re}(t_1)\ne \text{Re}(t_2)$ for all $t_1 \in \mathcal{C}_1$ and $t_2 \in \mathcal{C}_2$.
\item We say that $\mathcal{C}$ is a maximal cluster if there is no cluster $\mathcal{C}'$ strictly containing $\mathcal{C}$.
\end{itemize}
We note that a maximal cluster $\mathcal{C}$ is separated from any cluster which is not a subset of $\mathcal{C}$. Under Hypothesis $\mathcal{F}$ every non-trivial zero with imaginary part in $[T,2T]$ lies in some maximal cluster.

It will be profitable to consider separately clusters of different sizes, and according to the length of the Dirichlet polynomial which detects many elements of the cluster. 

\begin{definition}\label{defn:RNH}
Given a parameter $H \ge 1$ and $\sigma \in [\frac{1}{2},1]$, we define $R_{N,H}(\sigma)$ to be the number of zeros $\rho = \beta+i\gamma$ such that:
\begin{enumerate}
\item $\beta \geq \sigma$.
\item $\gamma \in [T,2T]$.
\item There is a maximal cluster $\mathcal{C}$ containing $\rho$ with $|\mathcal{C}|\in[H,2H]$ and $\text{Im}(\rho') \in [T,2T]$ for every $\rho' \in \mathcal{C}$ such that at least $H(\log{T})^{-4}$ zeros $\rho'\in\mathcal{C}$ are detected by a Dirichlet polynomial of length $N$.
\item There is a half-isolated zero $\rho_0 = \beta_0 + i\gamma_0$ such that $\beta_0 \geq \sigma$ and
\begin{align*}
|\gamma - \gamma_0| \le |\mathcal{C}|(\log T)^5
\end{align*}
for every $\sigma+i\gamma \in \mathcal{C}$.
\end{enumerate}
\end{definition}
With the above notation introduced, we are now in a position to state the main proposition which reduces bounding $N(\sigma,T)$ to bounding $R_{N,H}(\sigma)$ for various values of $N$ and $H$.
%
%
%
%
\begin{proposition}\label{prop:ClustersReduction}
Assume Hypothesis $\mathcal{F}$. Let $\sigma \in [1/2,1]$ be such that there are non-trivial zeros with real part equal to $\sigma$. Then there is an absolute constant $c_\mathcal{F} > 0$ such that
\begin{align*}
N(\sigma,2T) - N(\sigma,T) &\ll T^{2(1-\sigma)}(\log T)^{O(1)} + N(\sigma+c_\mathcal{F},3T) (\log T)^{O(1)} \\
&+ (\log{T})^{O(1)}\sup_{\substack{T^{1/100} \leq N \leq T^{1/2}(\log T)^2 \\  1\leq H \ll T}}R_{N,H}(\sigma).
\end{align*}
\end{proposition}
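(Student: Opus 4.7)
The plan is to use Hypothesis $\mathcal{F}$ to split zeros by their real part, then organize the remaining zeros into maximal clusters, each equipped with a half-isolated zero and a classical zero-detecting polynomial. Hypothesis $\mathcal{F}$ confines nontrivial zeros to finitely many fixed vertical lines, so the line $\textup{Re}(s)=\sigma$ is separated from the next line to its right by some absolute constant $c_\mathcal{F}>0$. Any nontrivial zero with real part strictly greater than $\sigma$ then has real part $\geq\sigma+c_\mathcal{F}$, and these zeros are accounted for by $N(\sigma+c_\mathcal{F},3T)$ (the factor $3T$ absorbing boundary inclusion). The rest of the argument handles zeros with $\textup{Re}(\rho)=\sigma$ and $\textup{Im}(\rho)\in[T,2T]$, which I would partition into maximal clusters in the sense of Definition \ref{dfntn:Cluster}.

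For each maximal cluster $\mathcal{C}$ on $\textup{Re}(s)=\sigma$, I would attach a half-isolated zero $\rho_0=\beta_0+i\gamma_0$ with $\beta_0\geq\sigma$ satisfying $|\gamma-\gamma_0|\leq|\mathcal{C}|(\log T)^5$ for every $\sigma+i\gamma\in\mathcal{C}$. The natural candidate is the cluster bottom $\rho_\mathcal{C}$: under Hypothesis $\mathcal{F}$, any zero $\rho'$ within $(\log T)^2$ of $\rho_\mathcal{C}$ either shares the line $\textup{Re}(s)=\sigma$ (and has $\textup{Im}(\rho')\geq\textup{Im}(\rho_\mathcal{C})$ by maximality of $\mathcal{C}$, placing $\rho'$ into case (1) of Definition \ref{def:YHalfIsolated} since $|\beta'-\sigma|=0$) or lies on another line at horizontal distance $\geq c_\mathcal{F}$ from $\sigma$. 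Choosing $Y=\exp((\log\log T)^3)$ makes $(\log\log T)^2/\log Y=1/\log\log T\ll c_\mathcal{F}$, so zeros on lines to the left of $\sigma$ satisfy case (2) of Definition \ref{def:YHalfIsolated}. If there are no zeros to the right of $\sigma$ within $(\log T)^2$ of $\rho_\mathcal{C}$, then $\rho_\mathcal{C}$ itself is $Y$-half-isolated. If such zeros exist, I would reroute by taking $\rho_0$ to be the cluster bottom on the largest-real-part line meeting a neighborhood of $\mathcal{C}$; by construction it has no neighbors strictly to its right and is half-isolated by the same argument, and it lies within $(\log T)^2$ of $\rho_\mathcal{C}$, hence within $|\mathcal{C}|(\log T)^3+(\log T)^2\leq|\mathcal{C}|(\log T)^5$ of every element of $\mathcal{C}$.

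With such a $\rho_0$ attached to each cluster, I would invoke the classical mollifier-based zero-detection: shifting the Mellin contour of the smoothed Dirichlet series $\sum_n a(n)n^{-s}\exp(-n/T^{1/2})$ past the zero $\rho$ of $\zeta(s)M(s)$, where $M(s)=\sum_{m\leq 2T^{1/100}}\mu(m)/m^s$, shows that every nontrivial zero $\rho$ with $\textup{Im}(\rho)\in[T,2T]$ satisfies $|D_N(\rho)|\geq 1/(3\log T)$ for some dyadic $N\in[T^{1/100},T^{1/2}(\log T)^2]$. For a cluster $\mathcal{C}$ of dyadic size $|\mathcal{C}|\in[H,2H]$, a pigeonhole over the $O(\log T)$ allowable values of $N$ produces a single $N$ detecting at least $|\mathcal{C}|/\log T\geq H(\log T)^{-4}$ zeros of $\mathcal{C}$, which is exactly condition (3) of Definition \ref{defn:RNH}. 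Combined with the attached half-isolated $\rho_0$ (condition (4)) and the trivial checks for (1) and (2), every zero of $\mathcal{C}$ is counted by $R_{N,H}(\sigma)$ for this $(N,H)$. Summing over clusters grouped by their assigned $(N,H)$-pair, of which there are at most $(\log T)^{O(1)}$ dyadic choices, produces the bound $(\log T)^{O(1)}\sup R_{N,H}(\sigma)$. The residual term $T^{2(1-\sigma)}(\log T)^{O(1)}$ absorbs edge contributions such as clusters overlapping the boundaries $\textup{Im}=T$ or $\textup{Im}=2T$, or small-cluster configurations that can be bounded directly by applying the large-value estimate for short Dirichlet polynomials (as in Corollary \ref{cor:half isolated zeroes satisfy DH}) to the attached half-isolated zeros.

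The main technical obstacle will be the rerouting in the second paragraph: one must verify quantitatively that, when the cluster bottom on $\textup{Re}(s)=\sigma$ fails to be half-isolated because of a neighbor of larger real part, the substitute choice of $\rho_0$ genuinely meets the thresholds of Definition \ref{def:YHalfIsolated} with $Y$ in the window $[(\log\log T)^3,\log T/\log\log T]$, and also that its vertical displacement from $\mathcal{C}$ is controlled by $|\mathcal{C}|(\log T)^5$ under worst-case chaining of neighboring clusters. Once this structural step is settled, the pigeonhole and summation are routine.
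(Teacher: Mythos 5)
Your overall skeleton (reduce to the line $\mathrm{Re}(s)=\sigma$ via Hypothesis $\mathcal{F}$, split into maximal clusters, attach a half-isolated zero, pigeonhole on the detecting length $N$) matches the paper, but two steps are genuinely broken. First, your claim that contour shifting shows \emph{every} zero with $\mathrm{Im}(\rho)\in[T,2T]$ satisfies $|D_N(\rho)|\ge 1/(3\log T)$ for some dyadic $N$ is false: the mollified Mellin-shift argument only gives the dichotomy of Lemma \ref{lmm:TypeIIIZeros}, namely that $\rho$ is Type I \emph{or} Type II (the shifted integral against $M(\rho+s)\zeta(\rho+s)$ may be the large term). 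A cluster consisting mostly of Type II zeros cannot be fed into $R_{N,H}(\sigma)$ at all, since condition (3) of Definition \ref{defn:RNH} then fails; the paper isolates these as Type A clusters and bounds them by the twisted fourth moment (Lemma \ref{lem:TypeIIZeroBound}), which is one source of the $T^{2(1-\sigma)}(\log T)^{O(1)}$ term. Your proposal has no argument for such clusters.

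Second, and more seriously, the ``rerouting'' step fails, and you correctly sensed this is the crux. If the bottom $\rho_{\mathcal C}$ of a cluster on $\mathrm{Re}(s)=\sigma$ has a zero to its right within $(\log T)^2$, the bottom of the cluster on the ``largest-real-part line meeting a neighbourhood of $\mathcal{C}$'' need not lie within $(\log T)^2$ of $\rho_{\mathcal C}$: that right-hand cluster can be very long and its bottom can sit vertically far below $\mathcal{C}$, destroying the required bound $|\gamma-\gamma_0|\le|\mathcal{C}|(\log T)^5$ in Definition \ref{defn:RNH}(4); moreover that bottom need not be half-isolated, since near \emph{it} there may be zeros on lines still further to the right (those lines need not meet the neighbourhood of $\mathcal{C}$), so the chain of reroutings gives no control on vertical displacement. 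The paper resolves this with a counting dichotomy rather than a purely structural one: if at least $|\mathcal{C}|/(\log T)^{100}$ zeros with real part $>\sigma$ lie within $(\log T)^3|\mathcal{C}|$ of $\mathcal{C}$ (Type C), the whole cluster is charged directly to $N(\sigma+c_{\mathcal F},3T)$ via Lemma \ref{lmm:TypeCClusters}; otherwise (Type D) the iterative construction of Lemma \ref{lmm:NearbyHalfIsolated} — each step moves at most $2(\log T)^2$, goes either strictly right or down along the same line, and produces pairwise distinct zeros of real part $>\sigma$ — must terminate at a half-isolated zero within $|\mathcal{C}|$ steps, since otherwise it would manufacture $\ge|\mathcal{C}|$ nearby zeros to the right of $\sigma$, contradicting the Type D hypothesis. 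Without this ``few zeros to the right'' input, no bound of the shape $|\mathcal{C}|(\log T)^{O(1)}$ on the displacement of the half-isolated zero is available, so your condition (4) cannot be verified. (A smaller gap of the same flavour: clusters straddling $\mathrm{Im}(s)=T$ or $2T$ are excluded by Definition \ref{defn:RNH}(3) and need the separate argument of Lemmas \ref{lmm:MaxClustersShort} and \ref{lmm:exceptionalClusters}, not just absorption into the residual term.)
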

%
%
%
%
Proposition \ref{prop:ClustersReduction} is the key place in the argument where we make use of the combinatorial structure imposed by Hypothesis $\mathcal{F}$. With this proposition established, our task is to obtain suitable bounds on $R_{N,H}(\sigma)$.
%
%
%
%
\begin{proposition}[Half-isolated zeros with clustering]\label{prop:HalfIsolatedClustering}
Assume Hypothesis $\mathcal{F}$, and let $\sigma \in [\frac{1}{2},1]$. Then
\[
R_{N,H}(\sigma)\ll T^{2(1-\sigma)}\exp((\log \log T)^{O(1)}).
\]
\end{proposition}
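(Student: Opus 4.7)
The plan is to bound $R_{N,H}(\sigma)$ by bounding the number of clusters $|\mathcal{B}|$ contributing to it and using the trivial estimate $R_{N,H} \leq 2H |\mathcal{B}|$. Each cluster in $\mathcal{B}$ is parametrised by its associated half-isolated zero $\rho_0$ via Definition \ref{defn:RNH}(4), so the overall strategy combines (i) a large-values bound on half-isolated zeros coming from Proposition \ref{prp:HalfIsolated}, (ii) a geometric bound on the number of clusters supported by each half-isolated zero under Hypothesis $\mathcal{F}$, and (iii) the detection condition involving $D_N$ from Definition \ref{defn:RNH}(3).

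First I would apply Proposition \ref{prp:HalfIsolated} to each relevant half-isolated zero $\rho_0$, yielding a short polynomial value $|S_U(\rho_0)| \geq (\log T)^{-100}$ for some $U \in \mathcal{U}$ with $|\mathcal{U}| \leq (\log T)^{152}$. Pigeonholing on $U$ at the cost of a polylogarithmic factor, all relevant half-isolated zeros are detected by a single $S_U$ of length $U \leq T^{O(1/\log \log T)}$. The standard Hal\'asz-Montgomery large-values theorem applied to this sub-polynomially long Dirichlet polynomial then shows that the number of $1$-well-spaced half-isolated zeros with $\beta_0 \geq \sigma$ and $\gamma_0 \in [T,2T]$ is at most $T^{2(1-\sigma)}\exp((\log\log T)^{O(1)})$, which is essentially Corollary \ref{cor:half isolated zeroes satisfy DH}. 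Under Hypothesis $\mathcal{F}$ there are $O(1)$ vertical lines; clusters on a common line are $(\log T)^3$-separated (Definition \ref{dfntn:Cluster}) and contained in a vertical window of height $\leq 2H(\log T)^5$ centred at $\rho_0$, so each $\rho_0$ supports at most $O(H(\log T)^2)$ clusters. These two facts combine to the preliminary bound $R_{N,H} \ll H^2\,T^{2(1-\sigma)}\exp((\log\log T)^{O(1)})$.

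The preliminary bound is only sufficient when $H$ is polylogarithmic in $T$, so the principal obstacle is saving the remaining factor of $H^2$ when $H$ is large. Here I would use the crucial assumption of Definition \ref{defn:RNH}(3): each cluster contains $\geq H(\log T)^{-4}$ zeros detected by $D_N$, giving the cluster-internal bound $\sum_{\rho \in \mathcal{C},\,\mathrm{det}} |D_N(\rho)|^2 \gg H(\log T)^{-O(1)}$. Summing over $\mathcal{C} \in \mathcal{B}$, using that the vertical windows of different clusters on the same vertical line are disjoint, and applying a Hal\'asz-Montgomery large-values estimate for $D_N$ of length $N \in [T^{1/100}, T^{1/2}(\log T)^2]$, I expect to obtain $|\mathcal{B}|\,H \ll T^{2(1-\sigma)} \exp((\log\log T)^{O(1)})$; combined with $R_{N,H} \leq 2H|\mathcal{B}|$ this yields the desired estimate. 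The hardest technical step is deploying Hal\'asz-Montgomery with the cluster-internal input even though detected zeros inside a single cluster are only $\gg 1/\log T$-separated rather than $(\log T)^3$-separated; matching this internal lower bound to the external large-values estimate without losing a factor of $H$ will require either choosing an appropriate well-separated sub-family of detected zeros in each cluster or a direct Gallagher-type argument that absorbs the near-coincidence of zeros inside a cluster into controllable logarithmic factors.
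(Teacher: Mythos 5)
Your steps (i)--(ii) are fine (they essentially reprove Corollary \ref{cor:half isolated zeroes satisfy DH} and lose a factor $H^2$, as you note), but step (iii), which carries the whole burden of removing that loss, does not work. There you abandon the half-isolated zero detector entirely and try to bound the total number of detected zeros, $|\mathcal{B}|\cdot H(\log T)^{-O(1)}$, by a Hal\'asz--Montgomery large-values estimate for $D_N$ alone, with $N$ anywhere in $[T^{1/100},T^{1/2}(\log T)^2]$. That is precisely the classical problem whose failure at ``inconvenient'' lengths is the reason the Ingham--Huxley exponent is $12/5$ rather than $2$: for example at $\sigma=3/4$ and $N\approx T^{2/5}$, mean-value and Hal\'asz-type estimates only give $\approx T^{3/5}$ large values, not the required $T^{2(1-\sigma)}=T^{1/2}$. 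The clustering information does not rescue this, because summing the cluster-internal bound $\sum_{\rho\in\mathcal{C}}|D_N(\rho)|^2\gg H(\log T)^{-O(1)}$ over disjoint clusters just reproduces the statement ``there are $\gg |\mathcal{B}|H(\log T)^{-O(1)}$ large values of $D_N$'', which is the same quantity the classical estimates fail to control at density-hypothesis strength. Indeed, if step (iii) worked as stated it would prove the Density Hypothesis for all Type I zeros without using Hypothesis $\mathcal{F}$ or half-isolated zeros at all. (Your worry about the $1/\log T$-spacing inside clusters is comparatively minor and is handled in the paper by Lemma \ref{lem:DiscreteToContinuous} together with a smooth majorant.)

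The paper's proof saves the factor of $H$ in a genuinely different way, by \emph{coupling} the two detectors rather than using them in separate steps. After reducing to $N\le T^{1/2}\exp(-(\log\log T)^{10})$ (the complementary range is handled exactly as in your preliminary remark, via a fourth moment of $D_N$), one first upgrades Proposition \ref{prp:HalfIsolated}: under Hypothesis $\mathcal{F}$ a half-isolated zero is $Y$-half-isolated for every admissible $Y$, so by iterating one builds a detector $B(s)$ of \emph{flexible} length, chosen to be $\approx T/N^2$ (Lemma \ref{lem:GoodLengthForHIZeros}). Second, the cluster condition in Definition \ref{defn:RNH} is converted into a mean square of $D_N$ over a window of length $H'=H(\log T)^{O(1)}$ centred at $\gamma_0$; after smoothing, the off-diagonal terms are forced to satisfy $|n_1-n_2|\ll N/H$, and this constraint is the source of the $1/H$ saving. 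Finally one bounds $\sum_{\rho_0}|B(\rho_0)|^2|S(\gamma_0)|^2$ by a continuous mean value over $[T/2,3T]$: since the total length $M N^2\approx T$, the integral localizes to $|m_2n_2n_3-m_1n_1n_4|\ll MN^2/T\ll\exp((\log\log T)^{O(1)})$, which is counted by divisor estimates, giving $K\ll T^{2-2\sigma}H^{-1}\exp((\log\log T)^{O(1)})$ and hence the proposition. The essential ideas missing from your proposal are thus (a) the flexible-length detector for half-isolated zeros (the fixed length $T^{O(1/\log\log T)}$ from Proposition \ref{prp:HalfIsolated} is too short to complement $N^2$ up to $T$), and (b) multiplying this detector against the cluster mean square so that the short averaging window, not a large-values theorem for $D_N$, produces the factor $H^{-1}$.
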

%
%
%
%

\begin{proof}[Proof of Theorem \ref{thm:ZeroDensity} assuming Propositions \ref{prop:ClustersReduction} and \ref{prop:HalfIsolatedClustering}]
The two propositions together imply
\begin{align*}
N(\sigma,2T) - N(\sigma,T) &\ll T^{2(1-\sigma)}\exp((\log \log T)^{O(1)}) + N(\sigma+c_\mathcal{F},3T) (\log T)^{O(1)}.
\end{align*}
Using this with $T/2,T/4,\ldots$ in place of $T$, we find
\begin{align}\label{eqn:Nsigma by Nsigma plus c}
N(\sigma,2T) &\ll T^{2(1-\sigma)}\exp((\log \log T)^{O(1)}) + N(\sigma+c_\mathcal{F},3T) (\log T)^{O(1)}.
\end{align}
By repeated application of \eqref{eqn:Nsigma by Nsigma plus c} we finally obtain
\begin{align*}
N(\sigma,2T) &\ll T^{2(1-\sigma)}\exp((\log \log T)^{O(1)}),
\end{align*}
noting that, under Hypothesis $\mathcal{F}$, we apply \eqref{eqn:Nsigma by Nsigma plus c} at most $O(1)$ times.
\end{proof}
%
%
%
%
\begin{proof}[Proof of Corollary \ref{cor:ShortIntervals} and Corollary \ref{cor:AlmostAllShortIntervals}]
Corollaries \ref{cor:ShortIntervals} and \ref{cor:AlmostAllShortIntervals} both follow from Theorem \ref{thm:ZeroDensity} via the explicit formula by standard techniques. Using the bound of Theorem \ref{thm:ZeroDensity} in the argument of \cite[Theorem 10.5]{IK2004} or \cite[Section 7.1]{Har2007} gives Corollary \ref{cor:ShortIntervals} immediately. For Corollary \ref{cor:AlmostAllShortIntervals} one needs to be slightly careful when $y\le x^{o(1)}$ in handling possible zeros very close to the line $\sigma=1$. By using the sharper bound $N(\sigma,T)\ll T^{167(1-\sigma)^{3/2}}(\log{T})^{17}$ (see \cite[Corollary 12.5]{Mon1971}) when $\sigma\ge 1-10^{-6}$ and Theorem \ref{thm:ZeroDensity} when $\sigma\le 1-10^{-6}$ in the standard argument as given in \cite[Section 9.1]{Har2007}, we obtain Corollary \ref{cor:AlmostAllShortIntervals}.
\end{proof}
%
%
%
%
\section{Proof of Proposition \ref{prop:ClustersReduction}--Combinatorics of zeros}
%
%
%
%
We review the classical set-up for detecting zeros of the Riemann zeta function. Since most of the technical details are somewhat standard but slightly different from other treatments, we have included full proofs in the appendix.
%
%
%
%
\begin{definition}[Type I/II zeros] Let $\rho=\beta+i\gamma$ be a non-trivial zero with $\gamma\in [T,2T]$.
\begin{enumerate}
\item We say $\rho$ is a `Type I zero' if
\begin{align}\label{eq:TypeICondition}
|D_N(\rho)|\geq \frac{1}{3\log{T}}
\end{align}
for some $N=2^j\in [T^{1/100},T^{1/2}(\log{T})^2]$, where $D_N(s)$ is given by \eqref{eq:DNDef}.
\item We say $\rho$ is a `Type II zero' if
\begin{align}\label{eq:Type II zero condition}
\Bigl|\frac{1}{2\pi i} \int_{(-\beta + 1/2)} T^{s/2} \Gamma(s) M(\rho+s)\zeta(\rho+s) ds\Bigr| \geq \frac{1}{3},
\end{align}
where 
\[
M(s) := \sum_{m \leq 2T^{1/100}} \frac{\mu(m)}{m^s}.
\]
\end{enumerate}
\end{definition}
%
%
%
%
Standard zero-detecting arguments (see \cite[Chapter 12]{Mon1971} or Appendix \ref{sec:ZeroDetection}) show the following.
%
%
%
%
\begin{lemma}\label{lmm:TypeIIIZeros}
Let $T$ be sufficently large. Then every non-trivial zero $\rho=\beta+i\gamma$ with $\gamma\in[T,2T]$ is either a Type I zero or a Type II zero (or both).
\end{lemma}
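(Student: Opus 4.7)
My plan is to start from the identity coming from Mellin inversion of $e^{-x}$. The Dirichlet series of $\zeta(s)M(s)$ has coefficients $a(n) = \sum_{d\mid n,\,d\leq 2T^{1/100}}\mu(d)$, and Möbius inversion gives $a(1)=1$ together with $a(n)=0$ for $2\leq n\leq 2T^{1/100}$. Evaluating
\begin{align*}
I_2(\rho) := \frac{1}{2\pi i}\int_{(2)} T^{s/2}\Gamma(s)\zeta(\rho+s)M(\rho+s)\,ds
\end{align*}
by expanding the Dirichlet series on $\text{Re}(s)=2$ and using $\int_{(2)}\Gamma(s)x^{-s}\,ds/(2\pi i) = e^{-x}$ will then yield
\begin{align*}
I_2(\rho) = e^{-1/T^{1/2}} + \sum_{n > 2T^{1/100}} \frac{a(n)}{n^{\rho}}e^{-n/T^{1/2}} = 1 + O(T^{-1/2}) + \sum_{n > 2T^{1/100}} \frac{a(n)}{n^{\rho}}e^{-n/T^{1/2}}.
\end{align*}

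Next I would shift the contour from $\text{Re}(s)=2$ to $\text{Re}(s)=-\beta+1/2$. Since $M$ is entire, the only poles crossed are at $s=0$ (from $\Gamma$) and $s=1-\rho$ (from $\zeta$). The residue at $s=0$ equals $\zeta(\rho)M(\rho)$, which vanishes because $\rho$ is a zero of $\zeta$. The residue at $s=1-\rho$ equals $T^{(1-\rho)/2}\Gamma(1-\rho)M(1)$, which is completely negligible by Stirling since $|\gamma|\asymp T$ gives $|\Gamma(1-\rho)|\ll e^{-\pi T/3}$. Together with a routine check that the horizontal connecting segments of the contour shift decay rapidly (again by vertical decay of $\Gamma$), this will produce the identity
\begin{align*}
1 + \sum_{n > 2T^{1/100}} \frac{a(n)}{n^{\rho}}e^{-n/T^{1/2}} = \frac{1}{2\pi i}\int_{(-\beta+1/2)} T^{s/2}\Gamma(s)\zeta(\rho+s)M(\rho+s)\,ds + O(T^{-1/2}).
\end{align*}

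The dichotomy will then follow immediately. If $\rho$ is not a Type II zero, the integral on the right has absolute value $<1/3$, so the sum on the left satisfies $|\text{sum}|\geq 1 - 1/3 - O(T^{-1/2}) \geq 1/2$ for $T$ large. The factor $e^{-n/T^{1/2}}$ confines the essential support of the sum to $n\leq T^{1/2}(\log T)^2$; the tail beyond contributes $O(T^{-100})$ via $|a(n)|\leq\tau(n)$. Splitting the remaining range $T^{1/100} < n \leq T^{1/2}(\log T)^2$ into $O(\log T)$ dyadic blocks $n\sim N = 2^j$ and applying the pigeonhole principle will yield some admissible $N$ with $|D_N(\rho)|\geq 1/(3\log T)$, so $\rho$ is a Type I zero.

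The main technical obstacle will be the careful bookkeeping of error terms: the residue at $s=1-\rho$, the horizontal connecting pieces of the contour shift, and the tail $n > T^{1/2}(\log T)^2$ of the Dirichlet sum must all be shown to be much smaller than the constant $1$ that drives the dichotomy. Each of these estimates relies on the classical bound $|\Gamma(\sigma+it)|\ll e^{-\pi|t|/2}|t|^{\sigma-1/2}$ for $|t|\geq 1$ combined with standard convexity bounds for $\zeta$ and trivial bounds for $M$, but no subconvexity input is required since the $\Gamma$-decay provides enormous savings throughout.
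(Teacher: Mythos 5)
Your proposal follows essentially the same route as the paper's own proof in Appendix \ref{sec:ZeroDetection}: Mellin inversion of $e^{-n/T^{1/2}}$ against $\zeta(\rho+s)M(\rho+s)$, a contour shift to $\mathrm{Re}(s)=-\beta+\tfrac12$ in which the $\Gamma$-pole at $s=0$ is cancelled by the zero at $\rho$ and the residue at $s=1-\rho$ is negligible by the decay of $\Gamma$, followed by dyadic decomposition and pigeonhole. The only detail worth making explicit is that the number of dyadic blocks $N=2^j$ in $(T^{1/100},T^{1/2}(\log T)^2]$ is about $\tfrac{49}{100}\log T/\log 2<\tfrac{3}{2}\log T$, so your pigeonhole step really does deliver the constant $\tfrac{1}{3\log T}$ from the lower bound $\tfrac12$.
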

%
%
%
%
The strategy is to show that these conditions can only hold infrequently. Let $R_I(\sigma,T)$ denote the number of Type I zeros with $\beta \geq \sigma$ and $\gamma\in[T,2T]$, and let $R_{II}(\sigma,T)$ denote the number of Type II zeros with $\beta \geq \sigma$ and $\gamma\in[T,2T]$. The Type II zeros cause few problems.
%
%
%
%
\begin{lemma}\label{lem:TypeIIZeroBound}
We have
\begin{align*}
R_{II}(\sigma,T) \ll T^{2(1-\sigma)}(\log T)^{O(1)}.
\end{align*}
\end{lemma}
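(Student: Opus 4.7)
The plan is to convert the Type II condition into a large-value statement for the Dirichlet polynomial $M(s)\zeta(s)$ on the critical line and then apply a standard large-values estimate for Dirichlet polynomials. Shifting the contour in the Mellin identity
\[
\sum_n \frac{a(n)}{n^\rho}\exp(-n/T^{1/2}) = T^{(1-\rho)/2}\Gamma(1-\rho)M(1) + \frac{1}{2\pi i}\int_{(-\beta+1/2)} T^{s/2}\Gamma(s)M(\rho+s)\zeta(\rho+s)\,ds
\]
from $\textup{Re}(s)=2$ down to $\textup{Re}(s)=-\beta+1/2$ crosses only the pole at $s=1-\rho$, since $\zeta(\rho)=0$ cancels the potential pole at $s=0$. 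Parametrising $s=-\beta+1/2+it$ one has $|T^{s/2}|=T^{(1-2\beta)/4}$, and the exponential decay of $\Gamma$ localises the integral to $|t|\le C\log T$. So for each Type II zero $\rho=\beta+i\gamma$ with $\beta\ge\sigma$, pigeonhole yields some $t_\rho\in[-C\log T,C\log T]$ with
\[
|M(\tfrac{1}{2}+i(\gamma+t_\rho))\,\zeta(\tfrac{1}{2}+i(\gamma+t_\rho))| \gg V := T^{(2\sigma-1)/4}(\log T)^{-O(1)}.
\]

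Next I collect the points $s_\rho:=\tfrac{1}{2}+i(\gamma+t_\rho)$ and thin to a $1$-separated subset of cardinality $\gg R_{II}(\sigma,T)/(\log T)^{O(1)}$ in $[T/2,3T]$, using the basic count $N(T+1)-N(T)\ll\log T$. By the approximate functional equation, $M\zeta$ on the critical line is (essentially) a Dirichlet polynomial $\sum_{n\le N}a(n)n^{-s}$ of length $N\ll T^{1/2+1/100}=T^{51/100}$, and the coefficients satisfy the standard mollifier bound $\sum_n a(n)^2/n\ll 1$, immediate from the identity $\sum_n a(n)^2 n^{-2s}=\zeta(2s)\sum_{d_1,d_2\le 2T^{1/100}}\mu(d_1)\mu(d_2)[d_1,d_2]^{-2s}$. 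The Halász--Montgomery large-values inequality (see e.g.\ \cite[Chapter 7]{Mon1994}) then yields
\[
R_{II}(\sigma,T)\ll (\log T)^{O(1)}\Bigl(\frac{N}{V^{2}}+\frac{T}{V^{6}}\Bigr) \ll (\log T)^{O(1)}\bigl(T^{101/100-\sigma}+T^{5/2-3\sigma}\bigr).
\]

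Elementary algebra shows both terms are $\le T^{2(1-\sigma)}$ in the range $\tfrac{1}{2}\le\sigma\le\tfrac{99}{100}$: the first because $101/100-\sigma\le 2(1-\sigma)$ iff $\sigma\le 99/100$, the second because $5/2-3\sigma\le 2(1-\sigma)$ iff $\sigma\ge 1/2$. For the sliver $\sigma>99/100$, the subconvexity bound $|\zeta(\tfrac{1}{2}+it)|\ll t^{1/6}$ combined with the trivial bound $|M(\tfrac{1}{2}+it)|\ll T^{1/200}$ forces $|M\zeta|\ll T^{1/6+1/200+o(1)}$ on the critical line, which is smaller than the threshold $V$ once $\sigma$ exceeds about $0.844$; so $R_{II}(\sigma,T)=0$ in this range and the stated inequality is trivial (one could alternatively invoke \cite[Corollary 12.5]{Mon1971}).

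The main technical point will be verifying the hypotheses of the Halász--Montgomery inequality and carefully tracking the coefficient norm bound together with the $T^{1/100}$-thick mollifier contribution to the length; this thickness is precisely what dictates the crossover threshold $99/100$ above. The full bookkeeping is classical and relegated to the appendix.
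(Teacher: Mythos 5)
Your opening reduction is the same as the paper's: shift the contour, use $\zeta(\rho)=0$ to kill the pole at $s=0$, truncate via the decay of $\Gamma$, and pigeonhole to points $\tfrac12+i(\gamma+t_\rho)$ where $|M\zeta|\gg V=T^{(2\sigma-1)/4}(\log T)^{-O(1)}$, then pass to a well-spaced subset. The gap is the large-values input. There is no Hal\'asz--Montgomery inequality of the form $R\ll(\log T)^{O(1)}\bigl(NV^{-2}+TV^{-6}\bigr)$ for a generic Dirichlet polynomial of length $N$. The actual Hal\'asz--Montgomery bound is $\sum_r|D(t_r)|^2\ll G\,(N+RT^{1/2})\log 2T$ with $G=\sum_n|a_n|^2$ (in the half-line normalisation), which yields $R\ll GNV^{-2}\log T$ only when $V^2\gg GT^{1/2}\log T$, i.e.\ $\sigma>1$ here; Huxley's subdivision of it gives $R\ll\bigl(GNV^{-2}+G^3NTV^{-6}\bigr)(\log NT)^{O(1)}$, with an extra factor of $N$ in the second term. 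With $N\approx T^{51/100}$, $G\ll(\log T)^{O(1)}$ and $V=T^{(2\sigma-1)/4-o(1)}$, that second term is $T^{301/100-3\sigma+o(1)}$, which exceeds the target $T^{2(1-\sigma)}$ by at least $T^{1/100}$ for every $\sigma\le 1$ (at $\sigma=3/4$ it is about $T^{0.76}$ against the needed $T^{1/2}$), and you cannot simply drop it. Indeed a bound of the shape you quote, with no $N$ in the second term and only logarithmic coefficient dependence, applied to (powers of) the classical detecting polynomials of length about $T^{1/2}$ would give the Density Hypothesis up to $T^{o(1)}$, so it cannot be ``classical bookkeeping''. (A smaller point: $\sum_n a(n)^2/n\ll 1$ is false over all $n$ --- your own identity exhibits the pole of $\zeta(2s)$ at $s=\tfrac12$ --- though the truncated sum over $n\le N$ is $\ll(\log T)^{O(1)}$, which would suffice if the rest worked.)

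The paper avoids generic large-values machinery altogether. After the same reduction it applies H\"older to raise the $u$-integral to the fourth power, selects a $(\log T)^3$-separated set $\mathcal{T}$ of Type II ordinates with $R_{II}(\sigma,T)\ll(\log T)^4|\mathcal{T}|$, compares the sum over $\mathcal{T}$ with an integral, and invokes the known twisted fourth moment bound $\int_{T/2}^{3T}\bigl|M(\tfrac12+it)\zeta(\tfrac12+it)\bigr|^4\,dt\ll T(\log T)^{O(1)}$ (Hughes--Young et al.; the mollifier has length only $2T^{1/100}$, well within the admissible twist range), giving $R_{II}(\sigma,T)\ll T^{2-2\sigma}(\log T)^{O(1)}$ directly for all $\sigma$. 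The key information is precisely the mollified structure of $M\zeta$, which your step of replacing $M\zeta$ by a generic polynomial of length $T^{1/2+1/100}$ discards: no known mean-value or large-values theorem for arbitrary coefficients of that length is strong enough at the threshold $V=T^{(2\sigma-1)/4}$. Your subconvexity fallback for $\sigma>99/100$ is fine but does not rescue the main range; to repair the argument, replace the large-values step by the twisted fourth moment as above.
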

%
%
%
%
\begin{proof}
See Appendix \ref{sec:ZeroDetection}.
\end{proof}
%
%
%
%
We now turn our attention to bounding the number $R_I(\sigma,T)$ of Type I zeros. Under Hypothesis $\mathcal{F}$ the zeros of $\zeta(s)$ lie on finitely many vertical lines, say $\text{Re}(s)=\sigma_1,\sigma_2,\dots,\sigma_k$. For each vertical line, we put the zeros $\rho=\beta+i\gamma$ with $\gamma\in[T,2T]$ on the line into maximal clusters (recall Definition \ref{dfntn:Cluster}). 

We initially take the first cluster $\mathcal{C}_1=\{\rho_1\}$ to consist of the zero with smallest imaginary part, and then repeatedly add to $\mathcal{C}_1$ any zeros on the vertical line which are within $(\log{T})^3$ of a zero already included in this cluster until all zeros not included in $\mathcal{C}_1$ are at least $(\log{T})^3$ away from all elements on $\mathcal{C}_1$. We then repeat the process by initially taking $\mathcal{C}_2=\{\rho_2\}$ to consist of the zero of smallest imaginary part not in $\mathcal{C}_1$, and then repeatedly adding any zero within $(\log{T})^3$ of an element of $\mathcal{C}_2$. Repeating this procedure all zeros on each vertical line are put into a finite number of $(\log{T})^3$-separated maximal clusters. We do this separately for all vertical lines. 

(Equivalently, we can define a relation by saying $\rho_1\sim\rho_2$ if $\text{Re}(\rho_1)=\text{Re}(\rho_2)$ and $|\text{Im}(\rho_1)-\text{Im}(\rho_2)|\le (\log{T})^3$, and then extend this to an equivalence relation by transitivity. The equivalence classes are then our maximal clusters of zeros.)

We now separately treat clusters according to the following definition.
%
%
%
%
\begin{definition}
Let $\mathcal{C}$ be a maximal cluster of zeros on the line $\text{Re}(s)=\sigma$ which contains a zero having imaginary part in $[T,2T]$.
\begin{itemize}
\item (Mainly Type II zeros) We say $\mathcal{C}$ is a `Type A cluster' if at least $|\mathcal{C}|/2$ zeros in $\mathcal{C}$ are Type II zeros.
\item (Zeros past the endpoints) We say $\mathcal{C}$ is a `Type B cluster' if at least $|\mathcal{C}|/2$ zeros in $\mathcal{C}$ are Type I zeros, and there is some $\rho = \beta+i\gamma \in \mathcal{C}$ such that $\gamma < T$ or $\gamma > 2T$.
\item (Many nearby zeros to the right) We say $\mathcal{C}$ is a `Type C cluster' if it is not Type A or Type B and
\[
\Bigl|\Bigl\{\rho:\,\text{Re}(\rho)>\sigma,\,\min_{\rho'\in\mathcal{C}}|\rho-\rho'|\le (\log{T})^3|\mathcal{C}|\Bigr\}\Bigr|\ge \frac{|\mathcal{C}|}{(\log{T})^{100}}.
\]
\item (Mainly Type I zeros and few nearby zeros to the right) We say $\mathcal{C}$ is a `Type D cluster' if at least $|\mathcal{C}|/2$ zeros in $\mathcal{C}$ are Type I zeros, if $\mathcal{C}$ is not Type B, and
\[
\Bigl|\Bigl\{\rho:\,\text{Re}(\rho)>\sigma,\,\min_{\rho'\in\mathcal{C}}|\rho-\rho'|\le (\log{T})^3|\mathcal{C}|\Bigr\}\Bigr|< \frac{|\mathcal{C}|}{(\log{T})^{100}}.
\]
\end{itemize}
\end{definition}
%
%
%
%
It is easy to see that every maximal cluster is at least one of Type A, B, C, or D. Type A, Type B, and Type C clusters will be relatively straightforward to handle.
%
%
%
%
\begin{lemma}\label{lmm:TypeAClusters}
Let $\mathcal{C}_1,\dots,\mathcal{C}_J$ be the Type A clusters in the region $\textup{Re}(s)\geq\sigma$. Then
\[
\sum_{j=1}^J\sum_{\substack{\rho \in \mathcal{C}_j \\ \textup{Im}(\rho) \in [T,2T]}}1\ll T^{2(1-\sigma)}(\log T)^{O(1)}.
\]
\end{lemma}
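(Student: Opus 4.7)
The plan is to reduce directly to the Type II zero count bound of Lemma \ref{lem:TypeIIZeroBound}, using only the defining property of Type A clusters together with the observation that distinct maximal clusters are disjoint. Since the clusters are maximal, any two of them share no zeros, so summing zero counts cluster-by-cluster is the same as counting zeros across all Type A clusters without overcounting.

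First I would fix the Type A clusters $\mathcal{C}_1,\dots,\mathcal{C}_J$ lying on vertical lines with real part $\geq \sigma$. For each $j$, let $t_j$ denote the number of Type II zeros inside $\mathcal{C}_j$. By the definition of Type A we have $t_j \geq |\mathcal{C}_j|/2$. Moreover, every zero appearing in $\mathcal{C}_j$ has real part equal to the common real part of that cluster, which is at least $\sigma$; by the definition of a Type II zero, it also has imaginary part in $[T,2T]$. Hence each of the $t_j$ Type II zeros in $\mathcal{C}_j$ is counted by $R_{II}(\sigma,T)$.

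Next, since different maximal clusters are disjoint as subsets of the zero set, summing over $j$ gives
\begin{align*}
\sum_{j=1}^J t_j \;\leq\; R_{II}(\sigma,T).
\end{align*}
Combining with $t_j \geq |\mathcal{C}_j|/2$ and the trivial bound
\begin{align*}
\sum_{\substack{\rho\in\mathcal{C}_j\\ \mathrm{Im}(\rho)\in[T,2T]}} 1 \;\leq\; |\mathcal{C}_j|,
\end{align*}
we deduce
\begin{align*}
\sum_{j=1}^J \sum_{\substack{\rho\in\mathcal{C}_j\\ \mathrm{Im}(\rho)\in[T,2T]}} 1 \;\leq\; \sum_{j=1}^J |\mathcal{C}_j| \;\leq\; 2\sum_{j=1}^J t_j \;\leq\; 2 R_{II}(\sigma,T).
\end{align*}
Invoking Lemma \ref{lem:TypeIIZeroBound} then yields the desired bound $T^{2(1-\sigma)}(\log T)^{O(1)}$.

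There is essentially no obstacle here: the whole content of Type A clusters is that they reduce to the Type II analysis, which was already done in Lemma \ref{lem:TypeIIZeroBound}. The only thing one needs to be slightly careful about is verifying that the Type II zeros inside a Type A cluster all lie in the region $\beta \geq \sigma$, $\gamma\in[T,2T]$ counted by $R_{II}(\sigma,T)$; this is immediate from the fact that clusters sit on vertical lines and that the Type II condition is only defined for zeros with ordinate in $[T,2T]$. The more substantive work is deferred to the treatment of Type B, C, and D clusters.
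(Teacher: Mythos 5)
Your argument is correct and is essentially identical to the paper's proof: both use disjointness of the maximal clusters together with the fact that at least half the zeros in a Type A cluster are Type II zeros to bound the count by $2R_{II}(\sigma,T)$, and then invoke Lemma \ref{lem:TypeIIZeroBound}. The extra care you take in checking that the Type II zeros lie in the region counted by $R_{II}(\sigma,T)$ is fine but adds nothing beyond what the paper does implicitly.
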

%
%
%
%
\begin{proof}
Since the $\mathcal{C}_j$ are disjoint, and at least half of the elements are Type II zeros, we have that
\[
\sum_{j=1}^J\sum_{\substack{\rho \in \mathcal{C}_j \\ \textup{Im}(\rho) \in [T,2T]}}1\le 2R_{II}(\sigma,T).
\]
Lemma \ref{lem:TypeIIZeroBound} now gives the result.
\end{proof}
%
%
%
%
Before continuing on to Type B clusters we pause to state several auxiliary results, which will be used here and throughout the paper. The first result allows us to pass from discrete averages of Dirichlet polynomials to continuous averages.
%
%
%
%
\begin{lemma}\label{lem:DiscreteToContinuous}
Let $N$ be a large, positive real number. Let $\ell$ be a positive integer, and let $a_n$ be a sequence of complex numbers supported on $n \leq N$ which satisfies $|a_n| \leq N^A$. Then for $t \in \mathbb{R}$ one has the bound
\begin{align*}
\Bigl|\sum_{n \leq N} a_n n^{-it}\Bigr| \ll_A (\log N)\int_{|u|\leq (\log N)^2} \Bigl|\sum_{n \leq N} a_n n^{-it-iu}\Bigr|du + \exp (-(\log N)^{1.1}).
\end{align*}
\end{lemma}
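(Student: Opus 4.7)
The plan is a standard Fourier-inversion maneuver that exploits the compact support of $(a_n)$ to insert a smooth factor $\eta(\log n / L)$, where $L := \log(2N)$ and $\eta : \mathbb{R} \to [0,1]$ is a fixed smooth bump equal to $1$ on $[-1,1]$ and supported in $[-2,2]$. Since every $a_n \neq 0$ has $n \le N$, this factor is identically $1$ on the support of $(a_n)$ and can be inserted for free. Writing $S(t) := \sum_n a_n n^{-it}$ and applying Fourier inversion to the inserted cutoff yields the identity
\begin{align*}
S(t) \;=\; \int_{\mathbb{R}} K(u)\, S(t+u)\, du, \qquad K(u) \;:=\; \frac{L}{2\pi}\, \widehat{\eta}(-Lu),
\end{align*}
after which the proof reduces to estimating this integral on and off the window $|u| \le (\log N)^2$.

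On the main range $|u| \le (\log N)^2$, the crude pointwise bound $|K(u)| \le L\|\widehat{\eta}\|_\infty/(2\pi) \ll \log N$ immediately gives the main term of the lemma:
\begin{align*}
\Bigl|\int_{|u|\le (\log N)^2} K(u) S(t+u)\, du\Bigr| \;\ll\; (\log N)\int_{|u|\le (\log N)^2} |S(t+u)|\, du.
\end{align*}

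For the tail $|u| > (\log N)^2$, I would combine the trivial estimate $|S(t+u)| \le \sum_n |a_n| \le N \cdot N^A = N^{A+1}$ with the rapid decay of $\widehat{\eta}$ coming from smoothness and compact support of $\eta$. Since $L|u| \gg (\log N)^3$ on this range, the integral $\int_{|u|>(\log N)^2} |K(u)|\, du$ is bounded by $C_k L^{1-k}(\log N)^{2(1-k)}$ for any admissible $k$, where $C_k$ controls the polynomial decay $|\widehat{\eta}(\xi)| \le C_k (1+|\xi|)^{-k}$. To beat the factor $N^{A+1}$ and reach the claimed error $\exp(-(\log N)^{1.1})$, we cannot take $k$ fixed; instead we choose $k$ growing (e.g.\ $k = \lfloor (\log N)^{1.1}\rfloor$). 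For a standard fixed bump one has $C_k \le (Ck)^k$, and this choice of $k$ comfortably absorbs both $N^{A+1}$ and the claimed exponent while still producing super-polynomial decay. Alternatively, one can short-circuit this bookkeeping by taking $\eta$ to be compactly supported of Gevrey class so that $|\widehat{\eta}(\xi)| \ll \exp(-c|\xi|^{1/2})$, in which case the tail is directly $\ll \exp(-c(\log N)^{3/2})$.

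The identity is one line, and the main-range bound is immediate. The only step requiring any care is the tail estimate, where one must squeeze more-than-polynomial decay out of the smoothness of $\eta$; this is the technical heart of the argument, but is routine.
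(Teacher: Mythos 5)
Your argument is correct, and it is essentially the standard proof of this type of statement: the paper itself gives no argument but simply cites Bourgain's Lemma 4.48, whose proof is exactly this kernel-smoothing manoeuvre, so your write-up supplies a self-contained version of what the paper outsources. The identity $S(t)=\int_{\mathbb{R}}K(u)S(t+u)\,du$ with $K(u)=\tfrac{L}{2\pi}\widehat{\eta}(-Lu)$ is valid (the interchange of the finite sum and the absolutely convergent integral is immediate), the main-range bound $|K(u)|\ll \log N$ gives the stated main term, and the tail computation closes: with $k\asymp(\log N)^{1.1}$ the factor $(\log N)^{-3(k-1)}$ beats both $C_k$ and $N^{A+1}$, leaving room below $\exp(-(\log N)^{1.1})$ for $N$ large in terms of $A$ (smaller $N$ being absorbed into the $\ll_A$). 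One small overstatement: for a ``standard'' compactly supported bump the constants in $|\widehat{\eta}(\xi)|\le C_k(1+|\xi|)^{-k}$ are typically of Gevrey-2 size $C_k\asymp (Ck)^{2k}$ rather than $(Ck)^k$ (compare the paper's own Lemma \ref{lem:Partition}, where $|w_0^{(j)}|\ll 1+(4j^2/e^2)^j$ and the Mellin transform decays like $\exp(-\sqrt{|t|/2})$); this does not matter, since even $(Ck)^{2k}$ loses only $\exp(O(k\log\log N))$ against the gain $\exp(-3k\log\log N)$, and your alternative of taking $\eta$ Gevrey with $|\widehat{\eta}(\xi)|\ll\exp(-c|\xi|^{1/2})$ — precisely the decay the paper's $w_0$ enjoys — short-circuits the issue entirely, giving a tail $\ll N^{A+1}\exp(-c'(\log N)^{3/2})$. (The unused parameter $\ell$ in the statement is a leftover from Bourgain's formulation and is rightly ignored.)
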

\begin{proof}
This is a slight reformulation of \cite[Lemma 4.48]{Bou2000}.
\end{proof}
%
%
%
%
Next, we need a result which facilitates summing a Dirichlet polynomial over points with different real parts.
%
%
%
%
\begin{lemma}\label{lem:RealPartsBiggerThanSigma}
Let $z_1 = \sigma_1 + it_1, \ldots, z_r = \sigma_r + it_r$ be complex numbers with $\frac{1}{2}\leq \sigma \leq \sigma_k\leq 1$ for all $1\leq k\leq r$. Then for any finite sequence of complex numbers $a_n$ we have
\begin{align*}
\sum_{k=1}^r \Big|\sum_{n} \frac{a_n}{n^{\sigma_k + it_k }} \Big| \leq \int_0^{1/2}\sum_{k=1}^r \left(\Big|\sum_{n} \frac{2a_n}{n^{\sigma + it_k }} \Big| + \Big|\sum_{n} \frac{a_n}{n^{\sigma + it_k }} \frac{\log n}{n^\alpha} \Big| \right) d\alpha.
\end{align*}
\end{lemma}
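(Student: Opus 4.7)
The plan is to reduce each exponent $\sigma_k$ to the common value $\sigma$ using the fundamental theorem of calculus, at the cost of an integrated error involving a logarithmic derivative.

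First, I write $\sigma_k = \sigma + \delta_k$, where $\delta_k := \sigma_k - \sigma \in [0, 1/2]$ by the hypotheses $\sigma \leq \sigma_k \leq 1$ and $\sigma \geq 1/2$. For each $n$, the identity
\[
n^{-\sigma_k} = n^{-\sigma} - \int_0^{\delta_k} n^{-\sigma - \alpha} \log n \, d\alpha
\]
follows from $\tfrac{d}{d\alpha} n^{-\sigma-\alpha} = -n^{-\sigma-\alpha}\log n$ and integrating from $0$ to $\delta_k$. Multiplying by $a_n n^{-it_k}$ and summing over $n$ (legitimate because the sum is finite), I obtain
\[
\sum_n \frac{a_n}{n^{\sigma_k + it_k}} = \sum_n \frac{a_n}{n^{\sigma + it_k}} - \int_0^{\delta_k} \sum_n \frac{a_n \log n}{n^{\sigma + \alpha + it_k}} \, d\alpha.
\]

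Next I apply the triangle inequality inside and outside the integral, then rewrite $\int_0^{\delta_k} = \int_0^{1/2} \mathbf{1}_{[0,\delta_k]}(\alpha)$ so that every integral extends over the same interval $[0,1/2]$. Summing over $1 \leq k \leq r$ and dropping the indicator for an upper bound gives
\[
\sum_{k=1}^r \Bigl|\sum_n \frac{a_n}{n^{\sigma_k + it_k}}\Bigr| \leq \sum_{k=1}^r \Bigl|\sum_n \frac{a_n}{n^{\sigma + it_k}}\Bigr| + \int_0^{1/2} \sum_{k=1}^r \Bigl|\sum_n \frac{a_n \log n}{n^{\sigma + it_k} n^{\alpha}}\Bigr| \, d\alpha.
\]

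Finally, to match the form stated in the lemma, I rewrite the first (constant in $\alpha$) sum as $\int_0^{1/2} 2 \sum_k |\sum_n a_n n^{-\sigma-it_k}| \, d\alpha$, using $\int_0^{1/2} 2 \, d\alpha = 1$. Combining the two integrals yields the stated inequality. The argument is essentially bookkeeping; there is no real obstacle beyond choosing the correct identity $n^{-\sigma_k} = n^{-\sigma} - \int_0^{\delta_k} n^{-\sigma-\alpha}\log n\, d\alpha$ to move the $\sigma_k$-dependence outside the modulus.
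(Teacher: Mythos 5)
Your proposal is correct and follows essentially the same route as the paper: the identity $n^{-\sigma_k}=n^{-\sigma}-\int_0^{\sigma_k-\sigma}n^{-\sigma-\alpha}\log n\,d\alpha$, the triangle inequality, extending the integral to $[0,1/2]$ by positivity (valid since $\sigma_k-\sigma\leq 1/2$), and absorbing the constant term via $\int_0^{1/2}2\,d\alpha=1$, which is exactly how the factor $2a_n$ in the statement arises. No gaps.
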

%
%
%
%
\begin{proof}
We have
\begin{align*}
\frac{1}{n^{\sigma_k}} = \frac{1}{n^\sigma} - \frac{\log n}{n^\sigma}\int_0^{\sigma_k-\sigma} \frac{d\alpha}{n^\alpha},
\end{align*}
so by the triangle inequality we derive
\begin{align*}
\left|\sum_{n} \frac{a_n}{n^{\sigma_k + it_k }} \right|&\leq \left|\sum_{n} \frac{a_n}{n^{\sigma + it_k }} \right| + \int_0^{\sigma_k-\sigma} \left|\sum_{n} \frac{a_n}{n^{\sigma + it_k }} \frac{\log n}{n^\alpha} \right| d\alpha.
\end{align*}
Extending the integral by positivity and summing over $k$ then gives the result.
\end{proof}
%
%
%
%
\begin{remark}
Given a Dirichlet polynomial $F(s) = \sum_n f(n)n^{-s}$, we write $\tilde{F}(s) = \sum_n \tilde{f}(n) n^{-s}$ for a Dirichlet polynomial that results after applying Lemma \ref{lem:RealPartsBiggerThanSigma}. In particular $|\tilde{f}(n)| \ll \log(3n) |f(n)|$.
\end{remark}
%
%
%
%
Finally, we need a result establishing a relationship between $N$, the length of a zero-detecting polynomial, and $H$, the number of elements in a cluster.
%
%
%
%
\begin{lemma}\label{lmm:MaxClustersShort}
Let $\mathcal{C}$ be a maximal cluster with $|\mathcal{C}| = H\geq 1$. Assume $\textup{Re}(\rho) \geq \frac{1}{2} + \frac{1}{\log \log T}$, $\textup{Im}(\rho) \in [T/2,3T]$ for all $\rho \in \mathcal{C}$. Assume also that $\geq H(\log T)^{-O(1)}$ of the elements of $\mathcal{C}$ are detected by a Dirichlet polynomial of length $N$. If $T$ is sufficiently large, then $N \geq H (\log T)^{O(1)}$.
\end{lemma}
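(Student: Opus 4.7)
The plan is to combine the detection lower bound $|D_N(\rho_k)| \geq 1/(3\log T)$ at many points in the cluster with the discrete mean-value theorem for Dirichlet polynomials, exploiting that the cluster lies on a single vertical line $\textup{Re}(s) = \sigma$ and that its elements have imaginary parts in an interval of length at most $L := H(\log T)^3$.

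First I would pass to a $1$-separated subset of the detected zeros: the Riemann--von Mangoldt bound $N(T+1) - N(T) \ll \log T$ lets us extract a $1$-separated set of at least $r_0 \gg H(\log T)^{-O(1)}$ detected zeros from the cluster by keeping at most one $\gamma_k$ per unit interval. I would then apply the discrete mean-value theorem for Dirichlet polynomials (the $1$-separated version of Montgomery--Vaughan, or via Gallagher's lemma) to bound, using $|a(n)| \leq \tau(n)$ and the exponential damping in the definition of $D_N$,
\begin{align*}
\sum_{k=1}^{r_0}|D_N(\sigma + i\gamma_k)|^2 \ll (L + N)\sum_{n\sim N}\frac{|a(n)|^2}{n^{2\sigma}} \ll (L+N)\,N^{1-2\sigma}(\log T)^{O(1)}.
\end{align*}
The detection hypothesis gives the matching lower bound $\sum_k |D_N(\rho_k)|^2 \geq r_0/(3\log T)^2 \gg H(\log T)^{-O(1)}$. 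Combining with $L \leq H(\log T)^3$ yields
\begin{align*}
H \ll (H + N) \, N^{1-2\sigma}\,(\log T)^{O(1)}.
\end{align*}

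Suppose for contradiction that $N < H$. The displayed inequality then forces $N^{2\sigma-1} \leq (\log T)^{O(1)}$. However, since $\sigma \geq \tfrac{1}{2} + \tfrac{1}{\log\log T}$ and $N \geq T^{1/100}$ (since $D_N$ is the Type I detector), we have
\begin{align*}
N^{2\sigma-1} \geq T^{1/(50\log\log T)} = \exp\Bigl(\frac{\log T}{50\log\log T}\Bigr),
\end{align*}
which exceeds any fixed power of $\log T$ once $T$ is sufficiently large. This contradiction forces $N \geq H$, and in particular $N \gg H(\log T)^{-O(1)}$.

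The key quantitative input is that the hypothesis $\sigma - \tfrac{1}{2} \geq 1/\log\log T$ is just strong enough, when paired with the implicit lower bound $N \geq T^{1/100}$ from the Type I detector, to make $N^{2\sigma-1}$ super-polylogarithmic in $T$. Without either ingredient the argument breaks down: $N^{1-2\sigma}$ would only be $(\log T)^{-O(1)}$, which cannot beat the polylogarithmic losses from thinning and from $\sum_{n \sim N}\tau(n)^2$. The only real technical obstacle is therefore invoking the correct form of the mean-value theorem for $1$-separated points in a window of length $L \ll H(\log T)^3$ rather than the whole range $[T/2,3T]$; with that in place, the rest is bookkeeping of logarithmic factors.
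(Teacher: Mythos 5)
Your proof is correct, and in essence it is the same argument as the paper's: both play the detection lower bound $\sum_{\rho}|D_N(\rho)|^2 \gg H(\log T)^{-O(1)}$ against a mean-value bound taken over a window of length $H(\log T)^{O(1)}$ (rather than over all of $[T/2,3T]$), and both derive the contradiction from $N^{2\sigma-1}\ge T^{1/(50\log\log T)}$, which exceeds every fixed power of $\log T$ because $N\ge T^{1/100}$ and $\sigma-\tfrac12\ge 1/\log\log T$. The only differences are in how the localized mean value is reached: the paper fixes the real part via Lemma \ref{lem:RealPartsBiggerThanSigma}, passes from the discrete sum to a continuous average with Lemma \ref{lem:DiscreteToContinuous}, and then applies the continuous mean-value theorem, whereas you exploit that a cluster lies on a single vertical line, thin the detected ordinates to a $1$-separated set using $N(T+1)-N(T)\ll\log T$, and invoke the well-spaced discrete mean-value theorem (Gallagher/Montgomery--Vaughan); both routes lose only $(\log T)^{O(1)}$, yours being slightly more self-contained for this particular lemma while the paper's keeps its machinery uniform with the arguments used elsewhere. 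One small adjustment: the conclusion as stated (and as used in Proposition \ref{prop:HalfIsolatedClustering}) is $N\ge H(\log T)^{O(1)}$ rather than just $N\ge H$; this follows from exactly your displayed inequality by running the contradiction under the hypothesis $N\le H(\log T)^{C}$ for a fixed constant $C$, which only contributes an extra factor $(\log T)^{C}$ on the right-hand side and is absorbed by the super-polylogarithmic size of $N^{2\sigma-1}$.
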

%
%
%
%
\begin{proof}
Assume for contradiction that $N \leq H(\log T)^{O(1)}$. Since $\geq H(\log T)^{-O(1)}$ elements of $\mathcal{C}$ are detected by a Dirichlet polynomial $D_N(s)$ of length $N\geq T^{1/100}$, we see that
\begin{align*}
\frac{H}{(\log T)^{O(1)}} \ll \sum_{\rho=\beta+it \in \mathcal{C}} \left|D_N(\beta+it) \right|^2.
\end{align*}
We apply Lemma \ref{lem:RealPartsBiggerThanSigma} with $\sigma = \frac{1}{2} + \frac{1}{\log \log T}$ and then take a supremum in $\alpha$ to obtain
\begin{align*}
\frac{H}{(\log T)^{O(1)}} \ll \sum_{\rho=\beta+it \in \mathcal{C}} \left|\widetilde{D}_N(\sigma+it) \right|^2.
\end{align*}
By Lemma \ref{lem:DiscreteToContinuous} we have
\begin{align*}
\frac{H}{(\log T)^{O(1)}} &\ll T^{-1} +  (\log T)^{O(1)} \sum_{\rho=\sigma+it \in \mathcal{C}} \int_{|u|\leq (\log T)^2}|\widetilde{D}_N(\sigma+it+iu)|^2 du \\
&\ll T^{-1} + (\log T)^{O(1)} \int_{-H(\log T)^{O(1)}}^{H(\log T)^{O(1)}}|\widetilde{D}_N(\sigma+iT_0 + iw)|^2 dw,
\end{align*}
for some $T_0 \asymp T$. The $T^{-1}$ term on the right-hand side may be deleted since $H \geq 1$. By the mean value theorem for Dirichlet polynomials \cite[Theorem 9.1]{IK2004} we then obtain
\begin{align*}
\frac{H}{(\log T)^{O(1)}} \ll (\log T)^{O(1)} H \sum_{N/2 < n \leq 2N} \frac{\tau(n)^2}{n^{2\sigma}} \ll (\log T)^{O(1)} H N^{-(2\sigma - 1)}\leq H T^{-(\log \log T)^{-3/2}},
\end{align*}
and this is a contradiction for $T$ sufficiently large.
\end{proof}
%
%
%
%
Now we show that the number of zeros in Type B clusters is small.
%
%
%
%
\begin{lemma}\label{lmm:exceptionalClusters}
Let $\mathcal{C}_1,\ldots,\mathcal{C}_J$ be the Type B clusters in the region $\textup{Re}(s)\geq\sigma$. If $\sigma \in [\frac{1}{2} + \frac{1}{\log \log T}, 1]$ then
\begin{align*}
\sum_{j=1}^J |\mathcal{C}_j| \ll T^{2(1-\sigma)}(\log T)^{O(1)}.
\end{align*}
\end{lemma}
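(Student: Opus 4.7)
The plan is to combine two observations: Hypothesis $\mathcal{F}$ forces the number of Type B clusters to be $O_{\mathcal{F}}(1)$, and each individual Type B cluster whose zeros have real part at least $\sigma$ has size $\ll T^{2(1-\sigma)}(\log T)^{O(1)}$. Summing over the finitely many clusters then yields the lemma.

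First, I would count the Type B clusters. Under Hypothesis $\mathcal{F}$ the zeros of $\zeta(s)$ lie on only finitely many fixed vertical lines, and on each such line the maximal clusters correspond to the maximal $(\log T)^3$-proximity chains of zeros. A cluster is Type B precisely when it chains across the boundary $\text{Im}(s) = T$ or $\text{Im}(s) = 2T$, so there can be at most two Type B clusters per vertical line, giving $J = O_{\mathcal{F}}(1)$ in total.

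Next, I would bound $|\mathcal{C}|$ for a single Type B cluster $\mathcal{C}$ lying on the line $\text{Re}(s) = \sigma_{\mathcal{C}} \geq \sigma$ and of size $H = |\mathcal{C}|$. Since at least $H/2$ of its zeros are Type I, pigeonholing over the $O(\log T)$ dyadic values of $N \in [T^{1/100}, T^{1/2}(\log T)^2]$ produces a single $N$ and a subset $S \subseteq \mathcal{C}$ of size $|S| \gg H/(\log T)^{O(1)}$ on which $|D_N(\rho)| \geq 1/(3\log T)$. All points of $S$ share the real part $\sigma_{\mathcal{C}}$ and occupy an imaginary-part interval of length at most $L := H(\log T)^3$. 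Applying Lemma \ref{lem:DiscreteToContinuous} (with Cauchy-Schwarz to handle $|D_N|^2$) and the mean value theorem for Dirichlet polynomials then gives
\[
\frac{H}{(\log T)^{O(1)}} \ll (\log T)^{O(1)} (L + N) N^{1 - 2\sigma_{\mathcal{C}}}.
\]
Because $\sigma_{\mathcal{C}} \geq \sigma \geq 1/2 + 1/\log\log T$ and $N \geq T^{1/100}$, one has $N^{1-2\sigma_{\mathcal{C}}} \leq T^{-c/\log\log T}$ for some absolute $c > 0$, which allows the $L$ contribution on the right-hand side to be absorbed into the left-hand side after rearranging. This yields
\[
H \ll (\log T)^{O(1)} N^{2(1-\sigma_{\mathcal{C}})} \ll (\log T)^{O(1)} N^{2(1-\sigma)} \ll T^{1-\sigma}(\log T)^{O(1)} \ll T^{2(1-\sigma)}(\log T)^{O(1)},
\]
using $N \leq T^{1/2}(\log T)^2$ and the elementary inequality $1-\sigma \leq 2(1-\sigma)$ for $\sigma \leq 1$.

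The main obstacle is the size bound above: naively the mean value theorem over an interval of length $L = H(\log T)^3$ only delivers $H \ll H \cdot N^{1-2\sigma_{\mathcal{C}}}(\log T)^{O(1)}$, which is vacuous unless the $N^{2\sigma_{\mathcal{C}} - 1}$ factor genuinely exceeds any fixed power of $\log T$. This is precisely where the hypothesis $\sigma \geq 1/2 + 1/\log\log T$ is needed: it guarantees $N^{2\sigma_{\mathcal{C}}-1} \geq T^{\Omega(1/\log\log T)}$, which dominates the $(\log T)^{O(1)}$ losses from Lemma \ref{lem:DiscreteToContinuous} and the pigeonholing step. With the per-cluster bound in hand, summing over the $O_{\mathcal{F}}(1)$ Type B clusters gives $\sum_{j=1}^J |\mathcal{C}_j| \ll T^{2(1-\sigma)}(\log T)^{O(1)}$ as required.
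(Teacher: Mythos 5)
Your argument is correct, but it is genuinely different from the paper's. You prove a bound for each individual Type B cluster: since all zeros of the cluster share one real part $\sigma_{\mathcal{C}}\ge\sigma$ and span an imaginary interval of length $\ll H(\log T)^3$, the discrete-to-continuous step plus the mean value theorem give $H(\log T)^{-O(1)}\ll(\log T)^{O(1)}(H(\log T)^{O(1)}+N)N^{1-2\sigma_{\mathcal{C}}}$, and since $N^{2\sigma_{\mathcal{C}}-1}\ge T^{c/\log\log T}$ beats any power of $\log T$ the $H$-term absorbs, yielding $H\ll N^{2(1-\sigma_{\mathcal{C}})}(\log T)^{O(1)}\ll T^{1-\sigma}(\log T)^{O(1)}$; you then multiply by the number of Type B clusters, which you bound by two per vertical line and hence $O_{\mathcal{F}}(1)$ under Hypothesis $\mathcal{F}$. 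This is essentially a quantitative strengthening of the paper's Lemma \ref{lmm:MaxClustersShort} rather than the paper's actual route: the paper instead observes (via Lemma \ref{lmm:MaxClustersShort}) that all zeros of Type B clusters are confined to windows of length $T^{1/2}(\log T)^{O(1)}$ around the endpoints $T$ and $2T$, raises $D_N$ to a power $k$ with $N^k\approx T^{1/2}$, applies the mean value theorem over those short windows to bound the \emph{total} count $R\ll T^{1-\sigma+\frac{2\sigma-1}{2k+2}}(\log T)^{O(1)}$, and invokes Huxley for $\sigma\ge 5/6$. What your route buys is simplicity (no power trick, no appeal to Huxley) and a stronger per-cluster bound $T^{1-\sigma}$; what the paper's route buys is that it never needs the number of Type B clusters to be small, and in particular it proves the lemma unconditionally, exactly as stated, whereas your count $J=O_{\mathcal{F}}(1)$ invokes Hypothesis $\mathcal{F}$, which is not among the lemma's stated hypotheses (though it is available where the lemma is applied, in Proposition \ref{prop:ClustersReduction}). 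If you want to match the unconditional statement, note that any Type B cluster must contain a zero within $(\log T)^3$ of $T$ or of $2T$ (its chain of gaps $\le(\log T)^3$ crosses an endpoint), and since distinct maximal clusters are disjoint this gives $J\ll(\log T)^4$ unconditionally, which is still harmless against your per-cluster bound; also, strictly speaking a cluster crossing an endpoint is only \emph{necessary}, not equivalent, to being Type B, but only the upper bound on $J$ is needed.
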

%
%
%
%
\begin{proof}
By the definition of a Type B cluster and Lemma \ref{lmm:MaxClustersShort} we see that the imaginary parts of elements of $\bigcup_{j=1}^J \mathcal{C}_j$ are contained in
\begin{align*}
[T - T^{1/2}(\log T)^{O(1)},T + T^{1/2}(\log T)^{O(1)}] \cup [2T - T^{1/2}(\log T)^{O(1)},2T + T^{1/2}(\log T)^{O(1)}].
\end{align*}
Write $R = \bigcup_{j=1}^J |\mathcal{C}_j|$. By the pigeonhole principle there is some $N \in [T^{1/100},T^{1/2}(\log T)^2]$ such that $\gg R/\log T$ of the elements of $\bigcup_{j=1}^J \mathcal{C}_j$ are detected by a Dirichlet polynomial of length $N$. Let $1\leq k\leq 100$ be such that $N^k \leq T^{1/2}(\log T)^{O(1)}\leq N^{k+1}$. By Lemma \ref{lem:RealPartsBiggerThanSigma} and then Lemma \ref{lem:DiscreteToContinuous} we have
\begin{align*}
R &\ll (\log T)^{O(1)}\sum_{\ell=1}^2 \int_{\ell T - T^{1/2}(\log T)^{O(1)}}^{\ell T + T^{1/2}(\log T)^{O(1)}} |\widetilde{D}_N(\sigma+it)|^{2k} dt,
\end{align*}
and the mean-value theorem for Dirichlet polynomials \cite[Theorem 9.1]{IK2004} yields
\begin{align*}
R &\ll T^{1/2}(\log T)^{O(1)} N^{-k(2\sigma-1)} \ll T^{1-\sigma + \frac{2\sigma-1}{2k+2}} (\log T)^{O(1)}.
\end{align*}
Since $k\geq 1$ we deduce
\begin{align*}
T^{1-\sigma + \frac{2\sigma-1}{2k+2}} (\log T)^{O(1)} \leq T^{2(1-\sigma)}(\log T)^{O(1)}
\end{align*}
for $\sigma \in [\frac{1}{2},\frac{5}{6}]$. Work of Huxley \cite{Hux1972} gives the result when $\sigma \geq \frac{5}{6}$.
\end{proof}
%
%
%
%
\begin{lemma}\label{lmm:TypeCClusters}
Assume Hypothesis $\mathcal{F}$. Let $\mathcal{C}_1,\dots,\mathcal{C}_J$ be the Type C clusters on $\textup{Re}(s)=\sigma$. Then
\[
\sum_{j=1}^J|\mathcal{C}_j|\ll (\log{T})^{105}\Bigl(N(\sigma+c_\mathcal{F},3T)-N(\sigma+c_\mathcal{F},T/2)\Bigr).
\]
\end{lemma}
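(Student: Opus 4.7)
My plan is a double-counting argument, pairing Type C clusters with the nearby right-zeros whose existence is guaranteed by the Type C hypothesis. Fix $\sigma$ and let $\mathcal{C}_1,\dots,\mathcal{C}_J$ be the Type C clusters on $\mathrm{Re}(s) = \sigma$. Using Hypothesis $\mathcal{F}$, the zeros with $\mathrm{Re}(\rho)>\sigma$ all lie on a finite collection of vertical lines, so the nearest such line sits at real part $\geq \sigma+c_\mathcal{F}$ for some positive constant $c_\mathcal{F}$ depending only on the configuration. For each Type C cluster $\mathcal{C}_j$, let $R_j$ be the set of zeros $\rho$ with $\mathrm{Re}(\rho)\ge\sigma+c_\mathcal{F}$ and $\min_{\rho'\in\mathcal{C}_j}|\rho-\rho'|\le (\log T)^3|\mathcal{C}_j|$; by the Type C property, $|R_j|\ge |\mathcal{C}_j|/(\log T)^{100}$. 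Then
\[
\sum_{j=1}^J \frac{|\mathcal{C}_j|}{(\log T)^{100}} \;\le\; \sum_{j=1}^J |R_j| \;=\; \sum_{\rho} \#\{j : \rho \in R_j\},
\]
so it suffices to show each right-zero $\rho$ is near at most $O((\log T)^5)$ Type C clusters.

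The main step (and the part where the $(\log T)^{105}$ loss is earned) is the dyadic argument bounding $\#\{j : \rho\in R_j\}$. Group the clusters by dyadic size: for $H=2^k$, let $\mathcal{F}_H$ be the Type C clusters with $|\mathcal{C}_j|\in[H,2H]$ and $\rho\in R_j$. For each $\mathcal{C}_j\in\mathcal{F}_H$, \emph{all} of the $|\mathcal{C}_j|\ge H$ zeros of the cluster lie within vertical distance $3H(\log T)^3$ of $\mathrm{Im}(\rho)$ (the cluster's span is $\le 2H(\log T)^3$ and some element is within $2H(\log T)^3$ of $\rho$). Since the clusters in $\mathcal{F}_H$ are disjoint as sets of zeros, they contribute $\ge |\mathcal{F}_H|\cdot H$ distinct zeros of $\zeta(s)$ on the line $\mathrm{Re}(s)=\sigma$ to an imaginary window of length $O(H(\log T)^3)$ around $\mathrm{Im}(\rho)$. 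Applying the standard bound $N(T+1)-N(T)\ll\log T$ gives at most $O(H(\log T)^4)$ zeros in this window, and therefore $|\mathcal{F}_H|\ll (\log T)^4$. Summing over the $O(\log T)$ dyadic values of $H\in[1,T]$ yields $\#\{j:\rho\in R_j\}\ll (\log T)^5$, as desired.

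Combining the double counting inequality with this bound gives
\[
\sum_{j=1}^J |\mathcal{C}_j| \;\ll\; (\log T)^{105} \cdot \#\bigl\{\rho : \mathrm{Re}(\rho)\ge \sigma+c_\mathcal{F},\, \rho\in \textstyle\bigcup_j R_j\bigr\}.
\]
To conclude, I need to place the imaginary parts of the relevant right-zeros in the range $(T/2,3T]$. For a cluster with $|\mathcal{C}_j|(\log T)^3\le T/2$, any $\rho\in R_j$ has $\mathrm{Im}(\rho)\in[T/2,5T/2]\subseteq(T/2,3T]$, so these contribute to $N(\sigma+c_\mathcal{F},3T)-N(\sigma+c_\mathcal{F},T/2)$. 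The main potential obstacle is the handful of ``large'' clusters with $|\mathcal{C}_j|>T/(2(\log T)^3)$: here $R_j$ might include zeros in $[0,T/2]$. However, since any such cluster has vertical span $\gg T/(\log T)^4$ inside $[T,2T]$, there are only $O((\log T)^4)$ such clusters, and each has $|\mathcal{C}_j|/(\log T)^{100}$ nearby right-zeros which must therefore be counted somewhere — a crude accounting (using that at least half of the imaginary range of $R_j$ lies in $(T/2,3T]$ once $T$ is large, since cluster elements lie in $[T,2T]$) absorbs their contribution into the same $(\log T)^{105}$ bound. I expect this endpoint bookkeeping around the boundary $T/2$ to be the most finicky part, but it is not conceptually difficult — the real heart of the argument is the dyadic cluster-counting estimate above.
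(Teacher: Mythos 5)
Your double-counting and dyadic cluster-counting step is sound and is essentially the same computation as the paper's (the paper fixes a dyadic size class $[L,2L]$, bounds $|\mathcal{C}|$ by $(\log T)^{100}$ times the number of nearby right-zeros, swaps the order of summation, and notes that a given right-zero can be near at most $O((\log T)^4)$ disjoint maximal clusters of size $\geq L$, then sums over the $O(\log T)$ dyadic classes, giving $(\log T)^{105}$). The problem is your endpoint treatment, which is not ``finicky bookkeeping'' but the place where a real ingredient is missing.

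Concretely, for a ``large'' cluster with $|\mathcal{C}_j|(\log T)^3\gg T$ the set $R_j$ lives in the window $[T-|\mathcal{C}_j|(\log T)^3,\,2T+|\mathcal{C}_j|(\log T)^3]$, and nothing in your argument forces any of these right-zeros to have imaginary part in $(T/2,3T]$: they could all sit at height, say, $\asymp |\mathcal{C}_j|(\log T)^3\gg 3T$, in which case they simply are not counted by $N(\sigma+c_\mathcal{F},3T)-N(\sigma+c_\mathcal{F},T/2)$ and their contribution cannot be ``absorbed''. Your parenthetical claim that ``at least half of the imaginary range of $R_j$ lies in $(T/2,3T]$'' is false once $|\mathcal{C}_j|(\log T)^3$ exceeds a constant multiple of $T$, and even where it holds it is irrelevant, since the zeros of $R_j$ need not be spread over that range. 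So as written, a hypothetical single Type C cluster with $\asymp T$ zeros whose nearby right-zeros all lie above $3T$ would defeat your bound.

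The paper closes exactly this hole by showing such large Type C clusters cannot exist. Since a Type C cluster is not Type A, at least half of its zeros are Type I (every zero is Type I or Type II by Lemma \ref{lmm:TypeIIIZeros}), so by pigeonhole a proportion $\gg 1/\log T$ of them are detected by a single Dirichlet polynomial of length $N\leq T^{1/2}(\log T)^2$; Lemma \ref{lmm:MaxClustersShort} (a mean-value theorem argument) then forces $|\mathcal{C}_j|\ll T^{1/2}(\log T)^{O(1)}$. With this bound, $2|\mathcal{C}_j|(\log T)^3=o(T)$, so every right-zero you count automatically has imaginary part in $(T/2,3T]$ and your main estimate goes through verbatim. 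Add this cluster-size bound (which is where the ``not Type A'' part of the Type C definition is actually used, and which implicitly requires $\sigma$ bounded away from $1/2$, as in the application) and your argument becomes a complete proof, essentially identical to the paper's.
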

%
%
%
%
\begin{proof}
By Hypothesis $\mathcal{F}$ any zero $\rho$ with $\text{Re}(\rho)>\sigma$ has $\text{Re}(\rho)\ge \sigma+c_\mathcal{F}$. Thus, if $\mathcal{C}$ is a Type C cluster on $\text{Re}(s)=\sigma$ with $|\mathcal{C}|\in[L,2L]$, then we see 
\[
|\mathcal{C}|\le (\log{T})^{100}\sum_{\substack{\text{Re}(\rho)\ge \sigma+c_{\mathcal{F}}\\ \min_{\rho'\in\mathcal{C}}|\rho-\rho'|\le 2L(\log{T})^3}}1.
\]
By Lemma \ref{lmm:MaxClustersShort} we have $L\leq T^{1/2}(\log T)^{O(1)}$. Summing over all such $\mathcal{C}$ and swapping the order of summation, we see 
\begin{align*}
\sum_{\substack{ |\mathcal{C}_j|\in[L,2L]}}|\mathcal{C}_j|&\le (\log{T})^{100}\sum_{\substack{\text{Re}(\rho)\ge \sigma+c_\mathcal{F} \\ T-2L(\log T)^3 \leq \text{Im}(\rho)\leq 2T + 2L(\log T)^3}}\sum_{\substack{\mathcal{C}\text{ maximal}\\ |\mathcal{C}|\ge L\\ |\rho-\mathcal{C}|\le 2L(\log{T})^3}}1\\
&\ll (\log{T})^{104}\Bigl(N(\sigma+c_\mathcal{F},3T)-N(\sigma+c_\mathcal{F},T/2)\Bigr).
\end{align*}
Summing over the $O(\log{T})$ possibilities for $L$ a power of two gives the result.
\end{proof}
%
%
%
%
\begin{lemma}\label{lmm:NearbyHalfIsolated}
Let $\mathcal{C}$ be a Type D cluster on $\textup{Re}(s)=\sigma$. Then there is a half-isolated zero $\rho_0$ such that $\textup{Re}(\rho_0)\ge \sigma$ and $\min_{\rho'\in\mathcal{C}}|\rho_0-\rho'|\le (\log{T})^3|\mathcal{C}|$.
\end{lemma}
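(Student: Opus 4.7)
The plan is to construct $\rho_0$ as a ``local bottom'' of a maximal cluster on some vertical line $\textup{Re}(s) = \sigma^* \ge \sigma$ lying near $\mathcal{C}$, where ``local bottom'' means no element of that cluster lies within $(\log T)^2$ below $\rho_0$. Under Hypothesis $\mathcal{F}$, inspection of Definition \ref{def:YHalfIsolated} shows that such a $\rho_0$ is automatically $Y$-half-isolated for every admissible $Y$ in Definition \ref{def:HalfIsolated}, \emph{provided} additionally that no zero of strictly larger real part lies within $(\log T)^2$ of $\rho_0$. Indeed: same-line zeros within $(\log T)^2 < (\log T)^3$ of $\rho_0$ belong to the same maximal cluster and hence have imaginary part at least $\textup{Im}(\rho_0)$, satisfying condition (1); zeros with strictly smaller real part are separated from $\textup{Re}(\rho_0)$ by at least $c_\mathcal{F}$ under Hypothesis $\mathcal{F}$, and since $(\log\log T)^2/\log Y \le 1/\log\log T < c_\mathcal{F}$ for admissible $Y$, they satisfy condition (2); and zeros with larger real part are excluded by hypothesis.

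Given this reduction, I would construct $\rho_0$ by an outer iteration. Start with $\rho_0$ equal to the bottom of $\mathcal{C}$. If some zero with real part strictly greater than $\textup{Re}(\rho_0)$ lies within $(\log T)^2$ of $\rho_0$, replace $\rho_0$ by a local bottom of the maximal cluster containing that zero, and repeat. Each iteration strictly increases $\textup{Re}(\rho_0)$ by at least $c_\mathcal{F}$, so by Hypothesis $\mathcal{F}$ (with only finitely many distinct vertical lines) the procedure terminates in $O(1)$ steps, at which point $\rho_0$ satisfies the no-larger-real-part condition and is hence half-isolated.

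The main obstacle is verifying the distance bound $\min_{\rho'\in\mathcal{C}}|\rho_0-\rho'|\le (\log T)^3|\mathcal{C}|$ across these iterations, since a priori an intermediate cluster $\mathcal{C}^*$ could extend outside the Type D region of radius $(\log T)^3|\mathcal{C}|$ around $\mathcal{C}$. I would handle this by working with a slightly shrunken region of radius $(\log T)^3|\mathcal{C}| - O((\log T)^2)$ and taking the local bottom to be the lowest element of $\mathcal{C}^*$ inside the shrunken region: any strictly lower element of $\mathcal{C}^*$ within $(\log T)^2$ would lie in the full Type D region by a buffer argument, contradicting minimality. The Type D hypothesis then bounds the portion of each intermediate $\mathcal{C}^*$ contained in the Type D region by $|\mathcal{C}|/(\log T)^{100}$, so the internal descent contributes at most $|\mathcal{C}|(\log T)^3/(\log T)^{100} = |\mathcal{C}|/(\log T)^{97}$ of imaginary displacement per outer step, plus the $(\log T)^2$ step across to the bad zero. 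Summing over the $O(1)$ outer iterations yields a total displacement of $O((\log T)^2) + O(|\mathcal{C}|/(\log T)^{97})$, well within $(\log T)^3|\mathcal{C}|$ for $|\mathcal{C}|\ge 1$. (In the particularly clean case $|\mathcal{C}| < (\log T)^{100}$, the Type D inequality is already vacuous and the bottom of $\mathcal{C}$ itself works.)
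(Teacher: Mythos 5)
Your construction is correct in substance and rests on the same underlying mechanism as the paper's proof: if the current candidate fails to be half-isolated there must be a zero strictly to the right nearby, one follows it, and the Type D hypothesis prevents this from carrying you further than $(\log T)^3|\mathcal{C}|$ from $\mathcal{C}$. The bookkeeping, however, is organised differently. The paper runs a single zero-by-zero walk starting at the bottom of $\mathcal{C}$: at each step it looks in a ball of radius $2(\log T)^2$, moves either down the current line or to the nearby zero of largest real part, observes that all zeros visited after the first step are distinct and lie strictly to the right of $\sigma$, and gets termination (and the distance bound) directly from the Type D count, with no appeal to Hypothesis $\mathcal{F}$ for the counting itself. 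You instead iterate over the finitely many vertical lines, jumping from cluster to cluster and descending to a ``local bottom'' each time: Hypothesis $\mathcal{F}$ gives you only $O(1)$ outer steps, and the Type D count is used to bound the length of each internal descent ($\leq |\mathcal{C}|(\log T)^{3-100}$) rather than the number of steps. Both work; the paper's version is a little leaner because it never needs to discuss clusters on the intermediate lines or a shrunken region, while yours makes the role of Hypothesis $\mathcal{F}$ in the termination more explicit.

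Two points need repair, though neither is fatal. First, Definition \ref{def:YHalfIsolated} constrains all zeros within $(\log|\gamma_0|)^2$ of $\rho_0$, and for $\gamma_0$ near $2T$ (or slightly above) this radius exceeds $(\log T)^2$ by an annulus of width $\asymp \log T$ which your verification does not cover; you should run the whole argument with radius $2(\log T)^2$ (as the paper does), which still sits below $(\log T)^3$ so the same-cluster and buffer arithmetic is unaffected. Second, your justification that the chosen point is a genuine local bottom -- ``any strictly lower element of $\mathcal{C}^*$ within $(\log T)^2$ would lie in the full Type D region, contradicting minimality'' -- does not quite close as written: minimality was taken over elements of $\mathcal{C}^*$ inside the \emph{shrunken} region, and membership in the full region alone contradicts nothing. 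The fix is available from your own displacement accounting: since every iterate stays within $O((\log T)^2 + |\mathcal{C}|(\log T)^{-97})$ of $\mathcal{C}$, any element of $\mathcal{C}^*$ lying within $2(\log T)^2$ below it is still far inside the shrunken region, and \emph{that} contradicts minimality. With these adjustments your proof is complete.
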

%
%
%
%
\begin{proof}
We construct a sequence of zeros. Let $\rho_1\in\mathcal{C}$ be the zero with smallest imaginary part. Given $\rho_j=\beta_j+i\gamma_j$, let
\[
\mathcal{Z}_j:=\{\rho=\beta+i\gamma:\,0<|\rho-\rho_j|\le 2(\log{T})^2,\,\beta>\beta_j\text{ or }\beta=\beta_j\text{ and }\gamma< \gamma_j\}. 
\]
If $\mathcal{Z}_j$ is empty, then $\rho_j$ is half-isolated. If $\mathcal{Z}_j$ is non-empty, we take $\rho_{j+1}$ to be an element of $\mathcal{Z}_j$ with largest real part. We note that $|\rho_{i+1}-\rho_i|\le 2(\log{T})^2$ for each $i$, so $\min_{\rho'\in\mathcal{C}}|\rho_j-\rho'|\le 2j(\log{T})^2$. Moreover, either $\beta_{j+1}=\beta_j$ and $\gamma_{j+1}<\gamma_j$ or $\beta_{j+1}>\beta_j$, so these zeros are all distinct. Since $\mathcal{C}$ is a cluster and $\rho_1$ has smallest real part, either $\rho_1$ is half-isolated or $\beta_2>\beta_1$. Therefore if there are no half-isolated zeros within $(\log{T})^3|\mathcal{C}|$ of $\mathcal{C}$, this procedure will produce a sequence of at least $|\mathcal{C}|$ zeros within $(\log{T})^3|\mathcal{C}|$ of $\mathcal{C}$ and all to the right of $\text{Re}(s)=\sigma$. But this contradicts the assumption that $\mathcal{C}$ is of Type D.
\end{proof}
%
%
%
%
\begin{lemma}\label{lmm:TypeDSubset}
Let $\mathcal{C}$ be a Type D cluster on $\textup{Re}(s)=\sigma$. Then there is a subset $\mathcal{C}'\subseteq\mathcal{C}$ and $N=2^j\in[T^{1/100},T^{1/2}(\log{T})^2]$ such that all of the following hold:
\begin{itemize}
\item ($\mathcal{C}'$ is not too small)
\[
|\mathcal{C}'|\ge|\mathcal{C}|/(\log{T})^2.
\]
\item (There is a nearby half-isolated zero) There is a half-isolated zero $\rho_0$ such that $\textup{Re}(\rho_0)\ge \sigma$ and $|\rho_0-\rho|\le (\log{T})^5|\mathcal{C}'|$ for all $\rho\in\mathcal{C}'$.
\item (All zeros in $\mathcal{C}'$ are detected by a polynomial of length $N$) If $\rho\in\mathcal{C}'$ then
\[
\Bigl|\sum_{n\sim N} \frac{a(n)}{n^\rho}\exp \Bigl( - \frac{n}{T^{1/2}}\Bigr) \Bigr| \gg \frac{1}{\log T}.
\]
\end{itemize}
\end{lemma}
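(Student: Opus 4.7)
My plan is that this is almost pure bookkeeping: the nontrivial content (producing the nearby half-isolated zero) is done in Lemma \ref{lmm:NearbyHalfIsolated}, and the only remaining task is a pigeonhole argument to fix a single Dirichlet polynomial length and a distance computation using the diameter of a cluster.

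First, I would produce $\mathcal{C}'$ and $N$ by pigeonholing on the length of the zero-detecting polynomial. By definition of a Type D cluster, at least $|\mathcal{C}|/2$ of the elements of $\mathcal{C}$ are Type I zeros, so each such $\rho$ satisfies $|D_N(\rho)|\geq 1/(3\log T)$ for some dyadic $N=2^j$ in the range $[T^{1/100},T^{1/2}(\log T)^2]$. There are only $O(\log T)$ such dyadic values of $N$, so pigeonhole gives a single $N$ together with a subset $\mathcal{C}'\subseteq\mathcal{C}$ with $|\mathcal{C}'|\gg |\mathcal{C}|/\log T$ on which $|D_N(\rho)|\geq 1/(3\log T)$. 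This already yields the first and third bullet points (noting $|\mathcal{C}|/\log T \geq |\mathcal{C}|/(\log T)^2$ for $T$ large), so we have slack to spare in the lower bound on $|\mathcal{C}'|$.

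Next, I would invoke Lemma \ref{lmm:NearbyHalfIsolated} to produce a half-isolated zero $\rho_0$ with $\textup{Re}(\rho_0)\geq \sigma$ and $\min_{\rho'\in\mathcal{C}}|\rho_0-\rho'|\leq (\log T)^3|\mathcal{C}|$, which crucially uses the Type D hypothesis that few zeros lie to the right of $\textup{Re}(s)=\sigma$ near $\mathcal{C}$. To bound $|\rho-\rho_0|$ for $\rho\in\mathcal{C}'$, recall that in a maximal cluster consecutive imaginary parts differ by at most $(\log T)^3$, so the diameter of $\mathcal{C}$ is at most $(|\mathcal{C}|-1)(\log T)^3$. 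Combining this with the nearness of $\rho_0$ to some element of $\mathcal{C}$ and applying the triangle inequality gives
\[
|\rho-\rho_0|\le (\log T)^3|\mathcal{C}|+(\log T)^3|\mathcal{C}|\le 2(\log T)^3|\mathcal{C}|\le 2(\log T)^4|\mathcal{C}'|\le (\log T)^5|\mathcal{C}'|,
\]
where I used the pigeonhole lower bound $|\mathcal{C}|\leq (\log T)|\mathcal{C}'|$ and absorbed the factor of $2$ into the exponent for $T$ sufficiently large.

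There is no real obstacle here; the main work occurs in Lemmas \ref{lmm:NearbyHalfIsolated} and \ref{lmm:MaxClustersShort}, which together tell us that a half-isolated zero exists nearby and that clusters are not too large relative to the detection length $N$. The present lemma just repackages the Type D hypothesis into the exact form needed for the subsequent estimation of $R_{N,H}(\sigma)$ in Proposition \ref{prop:HalfIsolatedClustering}.
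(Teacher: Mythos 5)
Your proposal is correct and follows essentially the same route as the paper: pigeonhole the (at least $|\mathcal{C}|/2$) Type I zeros over the $O(\log T)$ dyadic lengths to get $\mathcal{C}'$ and $N$, invoke Lemma \ref{lmm:NearbyHalfIsolated} for the half-isolated zero, and combine the cluster-diameter bound $(\log T)^3|\mathcal{C}|$ with the triangle inequality and $|\mathcal{C}|\ll (\log T)|\mathcal{C}'|$ to get the $(\log T)^5|\mathcal{C}'|$ distance bound. The minor constant bookkeeping (the factor from the pigeonhole and the factor $2$) is handled exactly as in the paper, by absorbing it into the spare power of $\log T$.
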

%
%
%
%
\begin{proof}
Let $\mathcal{C}_1$ be the set of Type I zeros in $\mathcal{C}$. Since $\mathcal{C}$ is a Type D cluster, $|\mathcal{C}_1|\ge |\mathcal{C}|/2$. From  \eqref{eq:TypeICondition} every Type I zero satisfies
\[
\Bigl|\sum_{n\sim M} \frac{a(n)}{n^\rho} \exp \Bigl( - \frac{n}{T^{1/2}}\Bigr) \Bigr| \gg \frac{1}{\log T},
\]
for some $M=2^j\in [T^{1/100},T^{1/2}(\log{T})^2]$. By the pigeonhole principle there is some $N=2^j$ such that at least $|\mathcal{C}_1|/\log{T}$ zeros satisfy this with $M=N$. We let $\mathcal{C}'$ be this set of zeros, so $|\mathcal{C}'|\gg|\mathcal{C}|/\log{T}$. It follows from Lemma \ref{lmm:NearbyHalfIsolated} that there is a half-isolated zero within $(\log{T})^3|\mathcal{C}|$ of $\mathcal{C}$ to the right of $\text{Re}(s)=\sigma$. Since $\mathcal{C}$ is contained in an interval of length $(\log{T})^3|\mathcal{C}|$ we see that $\rho_0$ must satisfy
\[
|\rho_0-\mathcal{C}'|\le 2(\log{T})^3|\mathcal{C}|\ll (\log{T})^4|\mathcal{C}'|.
\]
This gives the result.
\end{proof}
%
%
%
%
We are now in a position to prove Proposition \ref{prop:ClustersReduction}.
%
%
%
%
\begin{proof}[Proof of Proposition \ref{prop:ClustersReduction}]
We may assume $\sigma \geq \frac{1}{2} + c$ for some absolute constant $c>0$, since otherwise the result is trivial by Hypothesis $\mathcal{F}$.

By Hypothesis $\mathcal{F}$, all zeros have real part in a finite set $\{\sigma_1,\dots,\sigma_J\}$ with $\sigma_{j+1}>\sigma_j+c_{\mathcal{F}}$ for some  absolute constant $c_{\mathcal{F}}>0$. By the above discussion, every non-trivial zero $\rho=\beta+i\gamma$ with $\gamma\in[T,2T]$ lies in a cluster of Type A, Type B, Type C, or Type D. Therefore
\[
N(\sigma,2T)-N(\sigma,T)\le \sum_{\sigma_j\ge \sigma}\Bigl(N_A(\sigma_j)+N_B(\sigma_j)+N_C(\sigma_j) + N_D(\sigma_j)\Bigr),
\]
where $N_A(\sigma)$ counts the number of zeros $\rho=\beta+i\gamma$ with $\beta=\sigma$ and $\gamma\in[T,2T]$ which lie on a cluster of Type A, and similarly for $N_B(\sigma)$, $N_C(\sigma),N_D(\sigma)$. By Lemmas \ref{lmm:TypeAClusters}, \ref{lmm:exceptionalClusters}, and \ref{lmm:TypeCClusters} and we have 
\begin{align*}
N_A(\sigma),N_B(\sigma)&\le T^{2(1-\sigma)}(\log{T})^{O(1)},\\
N_C(\sigma)&\le (\log{T})^{O(1)}N(\sigma+c_{\mathcal{F}},3T).
\end{align*}
 We split the count $N_D(\sigma)$ by dyadically pigeonholing on the length of the cluster. This gives
 \begin{equation}
 N(\sigma,2T)-N(\sigma,T)\le (\log{T})^{O(1)}T^{2(1-\sigma)}+(\log{T})^{O(1)}N(\sigma+c_{\mathcal{F}},3T)+\sum_{\sigma_j\ge \sigma}\sum_{\substack{H=2^j\\ 1\leq H\ll T}}N_{D}(\sigma_j;H),
 \label{eq:NBound2}
 \end{equation}
 where $N_{D}(\sigma;H)$ counts the total number of zeros $\rho=\sigma+i\gamma$ with $\gamma\in[T,2T]$ in a cluster of Type D which has cardinality between $H$ and $2H$. By Lemma \ref{lmm:TypeDSubset} every Type D cluster has a large subset $\mathcal{C}'$ such that all elements of $\mathcal{C}'$ are detected by a Dirichlet polynomial of length $N=2^j\in [T^{1/100},T^{1/2}(\log{T})^2]$. We split $N_{D}(\sigma;L)$ according to this quantity $N$, and note that these zeros are all counted by $R_{N,H}(\sigma)$ (by Lemma \ref{lmm:TypeDSubset}). Thus
 \[
 \sum_{\sigma_j\ge \sigma}N_{C}(\sigma_j;L)\le (\log T)^{O(1)} \sum_{\substack{N=2^j\\ T^{1/100}\le N\le T^{1/2}(\log{T})^2}}R_{N,H}(\sigma).
 \]
 Substituting this into our bound \eqref{eq:NBound2} for $N(\sigma,2T)-N(\sigma,T)$ and noting that there are $O(\log{T})^2$ different terms $L,N$ in the summation then gives the result. 
\end{proof}
%
%
%
%
\section{Proof of Proposition \ref{prop:HalfIsolatedClustering}--Half-isolated zeros with clustering}\label{sec:ClusteringBound}
%
%
%
%
We prove an auxiliary result, which says that, under Hypothesis $\mathcal{F}$, half-isolated zeros have zero-detecting polynomials of flexible length.
%
%
%
%
\begin{lemma}[Half-isolated zero detector of flexible length]\label{lem:GoodLengthForHIZeros}
Assume Hypothesis $\mathcal{F}$, and let $T$ be sufficiently large. There is a set $\mathcal{D} = \mathcal{D}(T)$ of Dirichlet polynomials with $\#\mathcal{D} \leq \exp((\log \log T)^3)$ such that the following holds.

For any half-isolated zero $\rho_0 = \sigma+i\gamma$, with $\gamma \in [T,2T]$ and any choice of $A$ with $\log{A}\in[(\log \log T)^4 ,\log T]$ there is some $B \in \mathcal{D}$ such that
\begin{itemize}
\item $|B(\rho_0)| \geq \exp(-(\log \log T)^3)$, and
\item $B(s) = \sum_m b(m) m^{-s}$ with $|b(m)| \leq \exp((\log \log T)^3)$ and $b(m)$ supported on
\begin{align*}
m \in [A\exp(-2(\log \log T)^4),A\exp((\log \log T)^2)].
\end{align*}
\end{itemize}
\end{lemma}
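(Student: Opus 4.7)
The plan is to deduce Lemma \ref{lem:GoodLengthForHIZeros} from Proposition \ref{prp:HalfIsolated} by forming products of the short zero-detecting polynomials it produces. The key preliminary observation is that, under Hypothesis $\mathcal{F}$, any half-isolated zero $\rho_0=\sigma+i\gamma$ is in fact $Y$-half-isolated for \emph{every} $Y$ with $(\log\log T)^3\leq\log Y\leq\log T$. Indeed, a nearby zero $\rho'=\beta'+i\gamma'$ either lies on the same vertical line as $\rho_0$, in which case $\beta'=\sigma$ (so trivially $|\beta'-\sigma|\leq 1/(10\log Y)$) and the hypothesis that $\rho_0$ is $Y_0$-half-isolated for some allowed $Y_0$ forces $\gamma'\geq\gamma$ (giving condition (1) of Definition \ref{def:YHalfIsolated}); or it lies on a distinct vertical line, in which case $|\beta'-\sigma|\geq c_\mathcal{F}$, forcing $\beta'<\sigma$ (the case $\beta'>\sigma$ being incompatible with $\rho_0$ being half-isolated at any allowed $Y_0$), and then $(\log\log T)^2/\log Y\leq c_\mathcal{F}$ gives condition (2). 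Hence Proposition \ref{prp:HalfIsolated} applies for every such $Y$, yielding some $U=U(Y,\rho_0)\in\mathcal{U}\cap[Y,Y^2]$ with $|S_U(\rho_0)|\geq(\log T)^{-100}$, where $S_U(s):=\sum_n\Lambda(n)w_0(n/U)n^{-s}$.

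I then define
\[
\mathcal{D}:=\Bigl\{\prod_{j=1}^{k}S_{U_j}(s):\,1\leq k\leq k_0,\ U_j\in\mathcal{U}\Bigr\},\qquad k_0:=\lceil\log\log T\rceil.
\]
Since $|\mathcal{U}|\leq(\log T)^{152}$, we have $|\mathcal{D}|\leq|\mathcal{U}|^{k_0}\leq\exp(O((\log\log T)^2))\leq\exp((\log\log T)^3)$. For the coefficients, writing $b(m)=\sum_{n_1\cdots n_k=m}\prod_j\Lambda(n_j)w_0(n_j/U_j)$, the fact that $\Lambda(n)\ne 0$ only when $n$ is a prime power forces each $n_j$ to be a prime power, so the number of contributing tuples is at most $k!$; this gives the uniform bound $|b(m)|\leq k_0!(\log T)^{k_0}\leq\exp(O((\log\log T)^2))\leq\exp((\log\log T)^3)$. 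The support of $b$ lies in $\prod_j[U_j/2,2U_j]$, an interval of log-width $\leq k_0\log 4\leq\log\log T$ around $\prod_j U_j$.

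Given $\rho_0$ half-isolated and target $A$, I build a suitable element of $\mathcal{D}$ adaptively. Starting with $P=1$, at each step I set $\log Y_j:=(\log A-\log P)/2$, invoke Proposition \ref{prp:HalfIsolated} to obtain $U_j\in\mathcal{U}\cap[Y_j,Y_j^2]$ with $|S_{U_j}(\rho_0)|\geq(\log T)^{-100}$, and update $P\leftarrow PU_j$. Since $\log U_j\in[\log Y_j,2\log Y_j]$, the new gap $\log A-\log P$ lies in $[0,(\log A-\log P)/2]$, so it at least halves each step and stays non-negative. I stop as soon as the gap falls below $2(\log\log T)^3$; starting from a gap $\leq\log T$ this requires at most $k\leq\log_2\log T\leq k_0$ steps. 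Throughout, one has $\log Y_j\in[(\log\log T)^3,\log T/2]$ before termination, so Proposition \ref{prp:HalfIsolated} is applicable. The resulting $B:=\prod_j S_{U_j}$ lies in $\mathcal{D}$, satisfies $|B(\rho_0)|\geq(\log T)^{-100 k_0}\geq\exp(-(\log\log T)^3)$, and has support in $[P/2^{k},2^{k}P]\subseteq[Ae^{-2(\log\log T)^4},Ae^{(\log\log T)^2}]$ since $\log P\in[\log A-2(\log\log T)^3,\log A]$ and $k\log 2\leq\log\log T$.

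The main obstacle in this plan is the termination analysis near the low end of the $Y$-range: the procedure requires $\log Y_j\geq(\log\log T)^3$, which is why I stop once the gap drops below $2(\log\log T)^3$ rather than all the way to zero, and the residual discrepancy must be absorbed into the support slack $e^{-2(\log\log T)^4}$. The other delicate point is the uniform coefficient bound $|b(m)|\leq\exp((\log\log T)^3)$: the crucial input is that $\Lambda$ is supported on prime powers, making the iterated convolutions of the $S_U$ genuinely sparse and avoiding the naive divisor-style bound which would give something closer to $T^{o(1)}$.
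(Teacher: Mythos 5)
Your construction is essentially the paper's proof: the same upgrade (under Hypothesis $\mathcal{F}$, a half-isolated zero is $Y$-half-isolated for every admissible $Y$), the same iterative halving towards $A$ via Proposition \ref{prp:HalfIsolated} (your update $\log Y_j=(\log A-\log P)/2$ is identical to the paper's choice $Y=A_i^{1/2}$, $A_{i+1}=A_i/U_i$), and the same final polynomial $B=\prod_j S_{U_j}$ with the same support and size bookkeeping.

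However, one step is wrong as stated: the claim that the number of tuples $(n_1,\dots,n_k)$ of prime powers with $n_1\cdots n_k=m$ is at most $k!$. That bound only holds when the prime powers involve distinct primes (e.g.\ $m$ squarefree); in general, for $m=2^e$ with $e\asymp\log T$ the number of such ordered factorizations is $\binom{e-1}{k-1}\approx(\log T)^{k-1}/(k-1)!$, which for $k\asymp\log\log T$ is of size $\exp(c(\log\log T)^2)$ and vastly exceeds $k!\approx\exp(c\log\log T\log\log\log T)$. The gap is harmless because the correct sparse count still suffices (and is what the paper uses): each $n_j$ is a prime power dividing $m$, and $m$ has at most $\sum_i e_i\le 2\log m$ prime-power divisors, so there are at most $(2\log m)^{k}\le\exp(O((\log\log T)^2))$ tuples; together with $\Lambda(n_j)\le\log(2T^2)$ this gives $|b(m)|\le\exp(O((\log\log T)^2))\le\exp((\log\log T)^3)$ as required. (You are right that the prime-power support is the crucial input — the purely divisor-theoretic count $\tau_k(m)$ can be as large as $k^{\log T/\log\log T}$, which would not suffice — but the right quantification is $(2\log m)^k$, not $k!$.) A second, trivial slip: with natural logarithms the number of iterations is bounded by $\log_2\log T\approx 1.44\log\log T$, which exceeds your $k_0=\lceil\log\log T\rceil$; taking $k_0=\lceil 2\log\log T\rceil$ (as the paper's $k\ll\log\log T$ implicitly allows) fixes this without affecting any of the subsequent estimates, since all bounds have slack of size $\exp((\log\log T)^3)$ against quantities of size $\exp(O((\log\log T)^2))$.
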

\begin{proof}
By Hypothesis $\mathcal{F}$ and the definition of a $Y$-half-isolated zero, we deduce that $\rho_0$ is $Y$-half-isolated for any $Y \in [\exp((\log \log T)^3),T]$. Set $A_1 = A$, and note that $\rho_0$ is $A_1^{1/2}$-half-isolated. Proposition \ref{prp:HalfIsolated} then implies there is some $U_1 \in \mathcal{U} \cap (A_1^{1/2},A_1]$ such that
\begin{align*}
\Big|\sum_n \frac{\Lambda(n)}{n^{\rho_0}}w_0(n/U_1) \Big| \gg (\log T)^{-100}.
\end{align*}
If $A_1/U_1\leq \exp((\log \log T)^4)$ then we stop. Otherwise, we set $A_2 = A_1/U_1$ and repeat the process with $A_1$ replaced by $A_2$.

Continuing in this fashion, we obtain sequences $A_1,A_2,\ldots,A_k$ and $U_1,U_2,\ldots,U_k$ such that $U_i \in (A_i^{1/2},A_i]$ for all $1\leq i \leq k$, $A_i = A_{i-1}/U_{i-1}$ for $2\leq i \leq k$, $A_i > \exp((\log \log T)^4)$ for $i \leq k-1$ and $A_k \leq \exp((\log \log T)^4)$. Furthermore, we have
\begin{align*}
\Big|\sum_n \frac{\Lambda(n)}{n^{\rho_0}}w_0(n/U_i) \Big| \gg (\log T)^{-100}
\end{align*}
for $1\leq i \leq k-1$. By induction we find that $A_i \leq A_1^{2^{-i+1}} \leq T^{2^{-i+1}}$, and since $A_{k-1} > \exp((\log \log T)^4)$ we deduce $k\ll \log \log T$. By construction we have $U_1 U_2 \cdots U_{k-1}\leq A_1 = A$, and
\begin{align*}
\exp((\log \log T)^4) \geq A_k = \frac{A_{k-1}}{U_{k-1}} = \frac{A_{k-2}}{U_{k-1}U_{k-2}} = \cdots = \frac{A_1}{U_{k-1}U_{k-2}\cdots U_1} = \frac{A}{U_{k-1}U_{k-2}\cdots U_1},
\end{align*}
so
\begin{align*}
A \exp(-(\log \log T)^4) \leq U_1 U_2 \cdots U_{k-1} \leq A.
\end{align*}

We define a Dirichlet polynomial $B(s)$ by
\begin{align*}
B(s) &= \sum_m \frac{b(m)}{m^s} = \prod_{i=1}^{k-1}\sum_{n_i} \frac{\Lambda(n_i)}{n_i^s}w_0(n_i/U_i),
\end{align*}
and note that $|B(\rho_0)| \geq \exp(-(\log \log T)^3)$. The coefficients $b(m)$ are supported on integers $m$ with
\begin{align*}
A\exp(-2(\log \log T)^4)\leq \frac{U_1 \cdots U_{k-1}}{2^{k-1}}\leq m\leq 2^{k-1}U_1\cdots U_{k-1} \leq A \exp((\log \log T)^2).
\end{align*}
Furthermore, we have
\begin{align*}
|b(m)| &\leq \sum_{\substack{n_1\cdots n_{k-1} = m \\ U_i/2 \leq n_i \leq 2U_i}} \Lambda(n_1)\cdots \Lambda(n_{k-1})\leq (2\log m)^{k-1}(\log 2T)^{k-1}\leq \exp((\log \log T)^3).
\end{align*}
Here we noted that if $m=p_1^{e_1}\cdots p_j^{e_j}$ then the number of choices of prime powers dividing $n$ is $\sum_{i=1}^je_i\leq 2\log m$, and so there are $\leq (2\log m)^{k-1}$ choices of $n_1,\dots,n_{k-1}$.

The bound for $\#\mathcal{D}$ arises from the trivial bound $\#\mathcal{D} \leq |\mathcal{U}|^{O(\log \log T)}$.
\end{proof}
%
%
%
%
\begin{proof}[Proof of Proposition \ref{prop:HalfIsolatedClustering}]
We first show that we may assume $N$ is not too near $T^{1/2}$. Indeed, assume that $N \geq T^{1/2}\exp(-(\log \log T)^{10})$. Then by the definition of $R_{N,H}(\sigma)$ (Definition \ref{defn:RNH}) we have
\begin{align*}
R_{N,H}(\sigma) &\ll (\log T)^{O(1)} \sum_{\rho = \beta + i\gamma} |D_N(\beta+i\gamma)|^4
\end{align*}
for some zeros $\rho = \beta+i\gamma$ with $\beta \geq \sigma$ and $\gamma \in [T,2T]$. By Lemmas \ref{lem:RealPartsBiggerThanSigma} and \ref{lem:DiscreteToContinuous} we obtain
\begin{align*}
R_{N,H}(\sigma) &\ll (\log T)^{O(1)} \int_{T/2}^{3T}|\tilde{D}_N(\sigma+iw)|^4 dw,
\end{align*}
and the mean-value theorem for Dirichlet polynomials yields
\begin{align*}
R_{N,H}(\sigma) &\ll (\log T)^{O(1)} T N^{-4\sigma+2} \leq T^{2(1-\sigma)} \exp((\log \log T)^{O(1)}).
\end{align*}

Therefore, we may henceforth assume $N \leq T^{1/2}\exp(-(\log \log T)^{10})$. By Hypothesis $\mathcal{F}$ we may also assume $\sigma \geq \frac{1}{2} + c$ for some absolute constant $c>0$, for otherwise the result is trivial. Further, we may assume that $N\geq H(\log T)^{O(1)}$, since Lemma \ref{lmm:MaxClustersShort} implies $R_{N,H}(\sigma)=0$ if $N\leq H(\log T)^{O(1)}$.

Consider a cluster $\mathcal{C}$ of zeros of length $\asymp H$ which is considered by $R_{N,H}(\sigma)$. A proportion $\gg (\log T)^{-O(1)}$ of the elements of $\mathcal{C}$ are detected by a Dirichlet polynomial of length $N$, and therefore
\begin{align*}
H(\log T)^{-O(1)} \ll \sum_{\rho \in \mathcal{C}} |D_N(\rho)|^2.
\end{align*}
To avoid technical complications later in the proof, it is convenient now to decompose $D(\rho)$ into level sets according to the size of the divisor function. We write
\begin{align*}
D_N(\rho) &= \sum_{n\sim N} \frac{a(n)}{n^\rho}\exp(-n/T^{1/2}) = \sum_{1\ll V \ll N^{o(1)}} \sum_{\substack{n\sim N \\ \tau(n) \sim V}} \frac{a(n)}{n^\rho}\exp(-n/T^{1/2}) = \sum_{1\ll V \ll N^{o(1)}} D_{N,V}(\rho),
\end{align*}
say. We then apply Cauchy-Schwarz to deduce
\begin{align*}
H(\log T)^{-O(1)} \ll \sum_V\sum_{\rho \in \mathcal{C}} |D_{N,V}(\rho)|^2.
\end{align*}

We may still use Lemma \ref{lem:RealPartsBiggerThanSigma} to remove the dependence on the real parts of the zeros $\rho$, but we must proceed more carefully than we have heretofore. Lemma \ref{lem:RealPartsBiggerThanSigma} implies
\begin{align*}
H(\log T)^{-O(1)} \ll \int_0^{1/2}\sum_{j=1,2}\sum_V\sum_{\rho \in \mathcal{C}} |\tilde{D}_{j,N,V}(\sigma+i\gamma)|^2 d\alpha,
\end{align*}
where the Dirichlet polynomials $\tilde{D}_{j,N,V}$ depend on $\alpha$, but we do not denote this in the notation. By the definition of $R_{N,H}(\sigma)$, there is a half-isolated zero $\rho_0 = \beta_0 + i\gamma_0$ such that $|\gamma-\gamma_0| \leq H(\log T)^{O(1)}$ for every $\rho = \beta+i\gamma \in \mathcal{C}$. By Lemma \ref{lem:DiscreteToContinuous} we then find
\begin{align*}
H(\log T)^{-O(1)} \ll (\log T)^{O(1)}\int_0^{1/2}\sum_{j=1,2}\sum_V \int_{\gamma_0-H'}^{\gamma_0+H'} |\tilde{D}_{j,N,V}(\sigma+it)|^2 dt d\alpha,
\end{align*}
where $H' = H(\log T)^{O(1)}$. Now let $f$ be a smooth, non-negative function with Fourier transform supported in $|y| \leq 1$ with $f(x) \gg 1$ for $|x| \leq 1$. We deduce
\begin{align*}
H(\log T)^{-O(1)} &\ll\int_0^{1/2}\sum_j\sum_V \int_\mathbb{R} f\left(\frac{t}{H'}\right)|\tilde{D}_{j,N,V}(\sigma+i\gamma_0 + it)|^2 dt \\
&= H'\int_0^{1/2}\sum_{j=1,2}\sum_V\mathop{\sum\sum}_{\substack{n_1,n_2 \sim N \\ \tau(n_1),\tau(n_2) \sim V}} \frac{\tilde{a}_j(n_1)\tilde{a}_j(n_2)}{(n_1n_2)^\sigma}\left(\frac{n_2}{n_1}\right)^{i\gamma_0} \widehat{f}\left(\frac{H'}{2\pi}\log(n_2/n_1)\right)d\alpha.
\end{align*}
The contribution from $n_1=n_2$ to the right-hand side is $\ll H' T^{-c'}$ for some constant $c'>0$, and this is smaller than the lower bound $H(\log T)^{-O(1)}$. We may therefore ignore this diagonal contribution. For the other terms, we note that $|\log(n_2/n_1)| \ll 1/H'$ by the support of $\widehat{f}$, and this implies $|n_2-n_1| \ll N/H' \ll N/H$. If we define
\begin{align*}
S(\gamma) &= \int_0^{1/2}\sum_{j=1,2}\sum_V\mathop{\sum\sum}_{\substack{\substack{n_1,n_2 \sim N\\ 1\leq |n_2-n_1| \ll N/H \\ \tau(n_1),\tau(n_2) \sim V}}} \frac{\tilde{a}_j(n_1)\tilde{a}_j(n_2)}{(n_1n_2)^\sigma}\left(\frac{n_2}{n_1}\right)^{i\gamma} \widehat{f}\left(\frac{H'}{2\pi}\log(n_2/n_1)\right) d\alpha\\ 
&= \int_0^{1/2}\sum_{j=1,2}\sum_V S_{j,V}(\gamma)d\alpha
\end{align*}
then we have $|S(\gamma_0)| \gg (\log T)^{-O(1)}$ for every half-isolated zero $\rho_0 = \beta_0 + i\gamma_0$ which is associated to a cluster of length $\asymp H$ as in the definition of $R_{N,H}(\sigma)$.

We require zero-detecting polynomials for the half-isolated zeros of a convenient length. We apply Lemma \ref{lem:GoodLengthForHIZeros} with $A = T N^{-2} \geq \exp((\log \log T)^{10})$ to obtain a Dirichlet polynomial $B(s)$ with the properties stated there. We define
\begin{align*}
K &= \sum_{\rho_0} |B(\rho_0)|^2 |S(\gamma_0)|^2,
\end{align*}
where the sum is over half-isolated zeros $\rho_0 = \beta_0 + i\gamma_0$ as in the definition of $R_{N,H}(\sigma)$. It follows that
\begin{align}\label{eqn:BoundRNHbyK}
R_{N,H}(\sigma) &\ll \exp((\log \log T)^{O(1)}) H K,
\end{align}
since we are counting zeros in clusters of length $\asymp H$. We take the supremum in $\alpha$, then apply Cauchy-Schwarz and the pigeonhole principle to obtain
\begin{align*}
K &\ll (\log T)^{O(1)} \sum_{j=1,2}\sum_V\sum_{\rho_0} |B(\rho_0)|^2 |S_{j,V}(\gamma_0)|^2 \\ 
&\ll (\log T)^{O(1)} \sum_{\rho_0} |B(\rho_0)|^2 |S_{j,V}(\gamma_0)|^2 = (\log T)^{O(1)} K_V
\end{align*}
for some $1\ll V \ll N^{o(1)}$ and $j\in\{1,2\}$. By dyadic subdivision and Cauchy-Schwarz, we may assume $B(s)$ is supported on integers $m \sim M$, where
\begin{align*}
\frac{T}{N^2}\exp(-(\log \log T)^{5}) \leq M \leq \frac{T}{N^2} \exp((\log \log T)^{5}).
\end{align*}
We apply Lemma \ref{lem:RealPartsBiggerThanSigma} to $B$ to remove the dependency on the real parts of $\rho_0$, obtaining
\begin{align*}
K_V &\ll \sum_{\rho_0} |\tilde{B}(\sigma+i\gamma_0)|^2 |S_{j,V}(\gamma_0)|^2.
\end{align*}
By a minor variant of Lemma \ref{lem:DiscreteToContinuous} we find
\begin{align*}
|\tilde{B}(\sigma+i\gamma_0)|^2 |S_V(\gamma_0)|^2 &\ll (\log T)^{O(1)} \int_{|u|\leq (\log T)^{O(1)}} |\tilde{B}(\sigma+i\gamma_0+iu)|^2 |S_{j,V}(\gamma_0+iu)|^2 du,
\end{align*}
and therefore
\begin{align*}
K_V &\ll (\log T)^{O(1)} \int_{T/2}^{3T}|\tilde{B}(\sigma+it)|^2 |S_{j,V}(t)|^2 dt.
\end{align*}

Let $\Psi$ be a suitable non-negative smooth function whose Fourier transform is supported in $|y|\leq 1$. Smoothing by $\Psi$, opening the squares, interchanging, and applying the triangle inequality, we obtain
\begin{align*}
K_V &\ll (\log T)^{O(1)}\int_\mathbb{R}\Psi(t/T)|\tilde{B}(\sigma+it)|^2 |S_{j,V}(t)|^2 dt \\
&\ll T \mathop{\sum\cdots \sum}_{\substack{m_1,m_2 \sim M \\ n_1,n_2,n_3,n_4 \sim N \\ 1\leq |n_2 - n_1| \ll N/H \\ 1\leq |n_4-n_3| \ll N/H \\ \tau(n_i) \sim V \\ |m_2n_2n_3 - m_1n_1n_4| \ll MN^2/T}} \frac{|\tilde{b}(m_1)\tilde{b}(m_2)|}{(m_1m_2)^\sigma} \frac{|\tilde{a}_j(n_1)\tilde{a}_j(n_2)\tilde{a}_j(n_3)\tilde{a}_j(n_4)|}{(n_1n_2n_3n_4)^\sigma} \\
&\ll \exp((\log \log T)^{O(1)})V^4 T M^{-2\sigma}N^{-4\sigma}\mathop{\sum\cdots \sum}_{\substack{m_1,m_2 \sim M \\ n_1,n_2,n_3,n_4 \sim N \\ 1\leq |n_2 - n_1| \ll N/H \\ 1\leq |n_4-n_3| \ll N/H \\ \tau(n_i) \sim V \\ |m_2n_2n_3 - m_1n_1n_4| \ll MN^2/T}} 1.
\end{align*}
Here we have used the bound on the coefficients $b(m)$ from Lemma \ref{lem:GoodLengthForHIZeros} and the fact that $|\tilde{a}_j(n)|\leq\tau(n) \sim V$. We drop the condition $1\leq |n_4-n_3| \ll N/H$, and then change variables $w_1 = m_1n_4, w_2 = m_2n_3$. By the size of $M$ we obtain
\begin{align*}
K_V &\ll \mathcal{L}V^4 T^{1-2\sigma} \mathop{\sum\sum}_{\substack{n_1,n_2 \sim N \\ 1\leq |n_2-n_1| \ll N/H \\ \tau(n_i) \sim V}} \mathop{\sum\sum}_{\substack{w_1,w_2 \sim MN \\ |n_1w_1 - n_2w_2| \leq \mathcal{L}}} \tau(w_1)\tau(w_2),
\end{align*}
where we have written $\mathcal{L} = \exp((\log \log T)^{O(1)})$. We apply the inequality
\begin{align*}
\tau(w_1)\tau(w_2) \leq \frac{1}{2}(\tau(w_1)^2 + \tau(w_2)^2)
\end{align*}
and use symmetry to deduce
\begin{align*}
K_V &\ll \mathcal{L} V^4 T^{1-2\sigma} \mathop{\sum\sum}_{\substack{n_1,n_2 \sim N \\ 1\leq |n_2-n_1| \ll N/H \\ \tau(n_i) \sim V}} \mathop{\sum\sum}_{\substack{w_1,w_2 \sim MN \\ |n_1w_1 - n_2w_2| \leq \mathcal{L}}} \tau(w_2)^2.
\end{align*}
We rewrite $n_1$ as $n$ and $w_2$ as $w$, and then change variables $n_2 = n+r, w_1 = w+s$, to obtain
\begin{align*}
K_V &\ll \mathcal{L} V^4 T^{1-2\sigma}\sum_{|\ell|\leq \mathcal{L}} \mathop{\sum\sum}_{\substack{n \sim N \\ 1\leq |r| \ll N/H \\ \tau(n) \sim V}} \mathop{\sum\sum}_{\substack{w \sim MN \\ |s|\ll MN \\ ns - rw =\ell}}\tau(w)^2.
\end{align*}
Observe that $s \neq 0$ since $r\neq 0$, and similarly $ns-\ell \neq 0$. The condition $|ns-rw| \leq \mathcal{L}$ implies $n|s| \ll MN^2/H$, and therefore $|s| \ll MN/H$. Since $w \mid ns-\ell$, it follows that
\begin{align*}
K_V &\ll \mathcal{L} V^4 T^{1-2\sigma}\sum_{|\ell|\leq\mathcal{L}} \sum_{\substack{n\sim N \\ \tau(n)\sim V}} \sum_{1\leq |s| \ll MN/H} \tau(|ns-\ell|)^3.
\end{align*}
We multiply by $\tau(n)^4 V^{-4} > 1$ and then change variables $f = n|s|$ to obtain
\begin{align*}
K_V &\ll \mathcal{L}  T^{1-2\sigma}\sum_{|\ell| \leq \mathcal{L}}\sum_{N \ll f\ll MN^2/H}\tau(f)^5 \tau(f+\ell)^3 \\
&\ll \mathcal{L}^3 T^{1-2\sigma} MN^2/H,
\end{align*}
the last inequality following by Cauchy-Schwarz and standard estimates for the divisor function. The constraints on $M$ imply $K_V \ll \mathcal{L}^4 T^{2-2\sigma} H^{-1}$, and combining with \eqref{eqn:BoundRNHbyK} yields the result.
\end{proof}
%
%
%
%
\section{Obstructions}\label{sec:Bows}
%
%
%
%
We hope that the ideas we have introduced in this paper can lead to new and improved zero-density estimates. However, as alluded to in the introduction, there are potential obstructions one must rule out in order to have a chance of making our results unconditional. We call these potential bad configurations ``bows'' of zeros. 

A bow of zeros is a set of zeros where the imaginary parts lie in an arithmetic progression with common difference $\approx \frac{1}{\log T}$, and where the real parts of the zeros vary smoothly between $\frac{1}{2}$ and some $\sigma > \frac{1}{2}$ and then back to $\frac{1}{2}$ again. An example is if the sequence $(x_j)_{j\le T^\epsilon}$ of complex numbers coincided with consecutive zeros of $\zeta(s)$, where $x_j$ is given by
\begin{align}\label{eq:BowOfZeros}
x_j=
\begin{cases}
\frac{1}{2} + \frac{j}{T^\varepsilon} + i\frac{c j}{\log T}+iT_0, \ \ \ \ \ \ &1\leq j\leq  \frac{1}{4}T^\varepsilon, \\
\frac{3}{4} + i\frac{c j }{\log T}+iT_0, &\frac{1}{4} T^\varepsilon < j \leq \frac{3}{4}T^\varepsilon, \\
\frac{3}{2} - \frac{j}{T^\varepsilon} + i\frac{c j}{\log T}+iT_0, &\frac{3}{4}T^\varepsilon < j \leq T^\varepsilon.
\end{cases}
\end{align}
Here $c>0$ is an absolute constant and $T_0 \asymp T$.
 \begin{center}
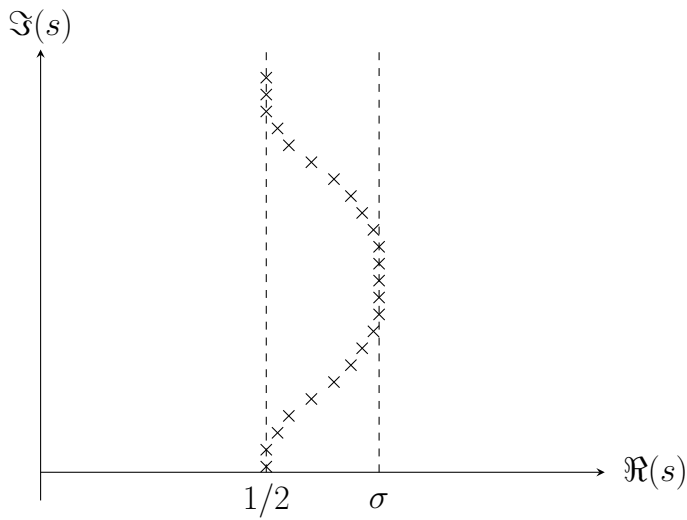
\begin{figure}[h!!!]
\begin{tikzpicture}[scale=0.75,transform shape]
\draw [->,>=stealth] (2*2,0)--(2*7,0);
\draw [->,>=stealth] (2*2,-0.5)--(2*2,7.5);
\draw [dashed] (2*4,0)--(2*4,7.5);
\draw [dashed] (2*5,0)--(2*5,7.5);
\node at (2*7.7-0.5,0) {\Large{$\Re(s)$}};
\node at (2*2,7.9) {\Large{$\Im(s)$}};
\node at (2*4,-0.5) {\Large{1/2}};
\node at (2*5,-0.5) {\Large{$\sigma$}};

\draw (2*3.95,5.7+1.2)--(2*4.05,5.9+1.2);
\draw (2*3.95,5.9+1.2)--(2*4.05,5.7+1.2);

\draw (2*3.95,5.4+1.2)--(2*4.05,5.6+1.2);
\draw (2*3.95,5.6+1.2)--(2*4.05,5.4+1.2);

\draw (2*3.95,5.1+1.2)--(2*4.05,5.3+1.2);
\draw (2*3.95,5.3+1.2)--(2*4.05,5.1+1.2);

\draw (2*4.05,4.8+1.2)--(2*4.15,5.0+1.2);
\draw (2*4.05,5.0+1.2)--(2*4.15,4.8+1.2);

\draw (2*4.15,4.5+1.2)--(2*4.25,4.7+1.2);
\draw (2*4.15,4.7+1.2)--(2*4.25,4.5+1.2);

\draw (2*4.35,4.2+1.2)--(2*4.45,4.4+1.2);
\draw (2*4.35,4.4+1.2)--(2*4.45,4.2+1.2);

\draw (2*4.55,3.9+1.2)--(2*4.65,4.1+1.2);
\draw (2*4.55,4.1+1.2)--(2*4.65,3.9+1.2);

\draw (2*4.7,3.6+1.2)--(2*4.8,3.8+1.2);
\draw (2*4.7,3.8+1.2)--(2*4.8,3.6+1.2);

\draw (2*4.8,3.3+1.2)--(2*4.9,3.5+1.2);
\draw (2*4.8,3.5+1.2)--(2*4.9,3.3+1.2);

\draw (2*4.9,3.0+1.2)--(2*5.0,3.2+1.2);
\draw (2*4.9,3.2+1.2)--(2*5.0,3.0+1.2);

\draw (2*4.95,2.7+1.2)--(2*5.05,2.9+1.2);
\draw (2*4.95,2.9+1.2)--(2*5.05,2.7+1.2);

\draw (2*4.95,3.6)--(2*5.05,3.8);
\draw (2*4.95,3.8)--(2*5.05,3.6);

\draw (2*4.95,3.3)--(2*5.05,3.5);
\draw (2*4.95,3.5)--(2*5.05,3.3);

\draw (2*4.95,3.0)--(2*5.05,3.2);
\draw (2*4.95,3.2)--(2*5.05,3.0);

\draw (2*4.95,2.7)--(2*5.05,2.9);
\draw (2*4.95,2.9)--(2*5.05,2.7);

\draw (2*4.9,2.4)--(2*5.0,2.6);
\draw (2*4.9,2.6)--(2*5.0,2.4);

\draw (2*4.8,2.1)--(2*4.9,2.3);
\draw (2*4.8,2.3)--(2*4.9,2.1);

\draw (2*4.7,1.8)--(2*4.8,2.0);
\draw (2*4.7,2.0)--(2*4.8,1.8);

\draw (2*4.55,1.5)--(2*4.65,1.7);
\draw (2*4.55,1.7)--(2*4.65,1.5);

\draw (2*4.35,1.2)--(2*4.45,1.4);
\draw (2*4.35,1.4)--(2*4.45,1.2);

\draw (2*4.15,0.9)--(2*4.25,1.1);
\draw (2*4.15,1.1)--(2*4.25,0.9);

\draw (2*4.05,0.6)--(2*4.15,0.8);
\draw (2*4.05,0.8)--(2*4.15,0.6);

\draw (2*3.95,0.3)--(2*4.05,0.5);
\draw (2*3.95,0.5)--(2*4.05,0.3);

\draw (2*3.95,0.0)--(2*4.05,0.2);
\draw (2*3.95,0.2)--(2*4.05,0.0);
\end{tikzpicture}
\caption{A bow of zeros, marked by crosses} \label{fig:bow}
\end{figure}
\end{center}

Locally (at a scale $T^{o(1)}$) every zero (except for the few zeros near the ends of the bow) looks like a zero in the middle of a vertical arithmetic progression, since the real parts are almost constant at this scale. Therefore it seems impossible to make the ``local'' power sum-type arguments of Section \ref{sec:HalfIsolated} work, since a variation of Remark \ref{rmk:Turan} (3) using Poisson summation would show that the corresponding sum over zeros would be uniformly small when trying to detect any zero in the middle of a bow. 

On the other hand, since the bow is based on the $\frac{1}{2}$-line and only has length $T^\epsilon$, it does not seem possible to use ``global'' clustering arguments to advantage either. Finding a short zero detector of a zero at the end of the bow would be of no use, since the zero would lie on the $\frac{1}{2}$ line and so would not correspond to an unusually large value of the zero detecting polynomial. Since the length of the cluster is only of length $T^\epsilon$, we could only hope to improve bounds by $T^{O(\epsilon)}$, which would be negligible as $\epsilon\rightarrow 0$. Therefore none of the new ideas in this paper appear to help with detecting these bows.

Finally, classical zero detecting arguments appear to offer no help in detecting bows of zeros. It appears entirely consistent with the Ingham-Huxley density estimate \eqref{eq:ZeroDensity} (and the underlying methods) that there are $\gtrapprox T^{3/5}$ bows of zeros \eqref{eq:BowOfZeros} at height $T_0\asymp T$. If the vertical gaps between zeros were significantly smaller than the average gap, then one might hope to use zero-repulsion techniques to rule out the possibility of there being so many nearby zeros away from the $\frac{1}{2}$-line, but this argument appears break down unless $c$ is sufficiently small. When the clusters have small length, quantities such as $N^*(\sigma,T)$ from \cite{HeathBrownII,HeathBrownIII} which estimate the approximate additive energy also appear to be of no help.

For these reasons the rigidity on the real parts of zeros imposed by Hypothesis $\mathcal{F}$ seems difficult to dispense with. If one could show that the only configurations of points where the corresponding sums are uniformly small come from a small number of vertical arithmetic progressions, then detecting bows of zeros would be essentially the only obstacle to an unconditional result.

\begin{remark}
The configuration of ``bows'' of zeros has some similarities to problematic configurations of zeros which obstruct improvements to the least quadratic non-residue (see \cite[Appendix]{Diamond}). We thank Roger Heath-Brown for pointing this out to us.
\end{remark}

%
%
%
%
\appendix
%
%
%
%
\section{The fifth moment of the zeta function}

For the convenience of the reader we restate Theorem \ref{thm:FifthMoment} as the following proposition.
%
%
%
%
\begin{proposition}\label{prop:FifthMoment}
Suppose that all of the non-trivial zeros of the Riemann zeta function lie on finitely many vertical lines. Suppose further that on each vertical line with real part $> \frac{1}{2}$ the zeros only occur in clusters of size $\leq T^{\varepsilon_0}$. Then we have
\begin{align*}
\int_0^T \Bigl| \zeta \Bigl(\frac{1}{2}+it \Bigr) \Bigr|^{5}dt \ll_\epsilon T^{1+O(\varepsilon_0)+\epsilon}.
\end{align*}
\end{proposition}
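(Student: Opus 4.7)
\textbf{Plan for Proposition \ref{prop:FifthMoment}.} The plan is to reduce the fifth moment to the sixth moment via Cauchy--Schwarz, and then to bound the sixth moment using a contour shift exploiting both Hypothesis $\mathcal{F}$ and the cluster hypothesis. By Cauchy--Schwarz,
\[
\int_0^T |\zeta(\tfrac{1}{2}+it)|^5 dt \leq \Bigl(\int_0^T |\zeta(\tfrac{1}{2}+it)|^4 dt\Bigr)^{1/2}\Bigl(\int_0^T |\zeta(\tfrac{1}{2}+it)|^6 dt\Bigr)^{1/2}.
\]
The fourth moment is $\ll T(\log T)^4$ by Ingham's classical unconditional estimate, so it suffices to establish the sixth-moment bound $\int_0^T |\zeta(\tfrac{1}{2}+it)|^6 dt \ll T^{1+O(\varepsilon_0)+\epsilon}$.

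For the sixth moment I would use a smoothed Mellin-type representation of $\zeta^3(\tfrac{1}{2}+it)$ and shift the contour of integration leftward from a line $\textup{Re}(s)>1$, where the Dirichlet series for $\zeta^3$ converges absolutely, down across the critical strip. This picks up a triple pole at $s=1$ (contributing a main term of size $\ll T(\log T)^{9}$ via a standard mean-value theorem applied to a Dirichlet polynomial approximation of $\zeta^3$) together with a residue at every non-trivial zero of $\zeta$ in the strip $\tfrac{1}{2}<\sigma\leq 1$. Squaring $\zeta^3(\tfrac{1}{2}+it)$ and integrating over $t\in[0,T]$, the sixth moment then decomposes as the main term plus a quadratic form in these off-critical-line zero residues.

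The quadratic form is controlled by combining both hypotheses. Under Hypothesis $\mathcal{F}$, the off-critical zeros lie on finitely many vertical lines $\sigma_j$. On each line with $\sigma_j>\tfrac{1}{2}$, the cluster hypothesis groups the zeros into maximal clusters of size at most $T^{\varepsilon_0}$, separated from one another by vertical gaps of at least $(\log T)^3$. The diagonal contribution of each cluster is bounded by $T^{\varepsilon_0}$ times a per-zero shift factor; summing over clusters on each line using the zero-density bound $N(\sigma_j,T)\ll T^{2(1-\sigma_j)+o(1)}$ from Theorem \ref{thm:ZeroDensity} keeps the total at $T^{1+O(\varepsilon_0)+o(1)}$, while cross-cluster (off-diagonal) terms are rendered negligible by the $(\log T)^3$-separation between clusters, which forces the corresponding oscillatory integrals to essentially vanish. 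The principal obstacle will be to carry out the contour shift with enough quantitative precision that the clean cluster factor $T^{\varepsilon_0}$ can indeed be extracted, and to handle cross terms between zeros on distinct vertical lines; edge effects near $t\approx 0$ and $t\approx T$ are standard and can be dealt with by suitable truncation.
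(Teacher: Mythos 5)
Your opening Cauchy--Schwarz step discards exactly the structure that makes the fifth moment accessible and replaces the problem by a strictly harder one. The paper's route keeps the decomposition $5=4+1$: the single extra factor of $\zeta$ is represented (via the approximate functional equation) by Dirichlet polynomials of length at most $T^{1/2+o(1)}$; each $n$ is factored into a $T^\epsilon$-smooth part $n_1$ and a rough part $n_2$; when $n_1>T^{1/4}$ the polynomial factors into pieces of length $\le T^{1/4+\epsilon}$ and is handled unconditionally by Watt's theorem (Lemma \ref{lem:WattMVT}), whose usefulness is confined to twisting polynomials of length $\lessapprox T^{1/4}$; when $n_1\le T^{1/4}$ the rough part is converted through $-\zeta'/\zeta$ into a sum over zeros, and it is here that Hypothesis $\mathcal{F}$ together with the cluster bound places every relevant zero within $T^{\varepsilon_0+o(1)}$ of a half-isolated zero, whose short zero-detector (Proposition \ref{prp:HalfIsolated}) raised to a power $\kappa$ recoups the loss $(T^{1/2}/N_1)^{\beta_0-1/2}$ while keeping each Cauchy--Schwarz factor a fourth moment twisted by a polynomial of length $\le T^{1/4}$, so Watt applies again. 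The exponents are tight: one extra AFE factor of length $T^{1/2}$ is exactly what $T^{1/4}$-amplifiers plus $T^{1/4}$-twists can absorb. For the sixth moment the extra polynomial has length up to $T$, and even a perfect amplifier would leave twisted fourth moments of length about $T^{1/2}$, where Lemma \ref{lem:WattMVT} gives nothing; a bound $\int_0^T|\zeta(\tfrac12+it)|^6\,dt\ll T^{1+O(\varepsilon_0)+\epsilon}$ is not a consequence of the paper's hypotheses by these methods, and nothing in the paper claims it.

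The sketch you give for the sixth moment also contains concrete errors. First, $\zeta^3$ is holomorphic at the non-trivial zeros, so shifting the contour in a Mellin representation of $\zeta^3$ picks up poles only at $s=1$ and at poles of the kernel; there is no ``residue at every non-trivial zero,'' and hence no quadratic form in off-line zero residues to estimate. Zeros enter the paper's argument only because the rough part is rewritten as sums of $\Lambda(n)n^{-s}$, whose Mellin transform involves $-\zeta'/\zeta$. Second, your claimed main term ``$\ll T(\log T)^9$ via a standard mean-value theorem applied to a Dirichlet polynomial approximation of $\zeta^3$'' is itself the open sixth-moment problem: the approximating polynomial at height $T$ has length about $T^{3/2}$, and the mean value theorem then yields only $\ll T^{3/2+\epsilon}$. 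Third, the hypotheses constrain only zeros with real part strictly greater than $\tfrac12$ (Hypothesis $\mathcal{F}$ plus clusters of size $\le T^{\varepsilon_0}$, and Theorem \ref{thm:ZeroDensity} concerns $N(\sigma,T)$ for $\sigma>\tfrac12$); they say nothing about the zeros on the critical line, which carry the bulk of the moment, so no estimate on off-line zeros alone can bound $\int|\zeta(\tfrac12+it)|^6\,dt$. To repair the argument you should abandon the reduction to the sixth moment and work with $4+1$ directly, organising everything around the $T^{1/4}$ threshold of Watt's theorem and the half-isolated-zero amplifier.
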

%
%
%
%
The key input (beyond our short zero-detecting Dirichlet polynomials), will be Watt's mean value theorem.
%
%
%
%
\begin{lemma}[Watt's mean value theorem]\label{lem:WattMVT}
\begin{align*}
\int_{-T}^T \Bigl| \zeta \Bigl(\frac{1}{2}+it \Bigr) \Bigr|^4 \Bigl|\sum_{m \leq M} a_m m^{-it} \Bigr|^2dt \ll_\epsilon M T^{1+\epsilon} \Bigl(1 + \frac{M^2}{T^{1/2}} \Bigr) \max_m |a_m|^2.
\end{align*}
\end{lemma}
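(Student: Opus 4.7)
The plan is to prove Watt's bound by applying the approximate functional equation for $\zeta^2$ to reduce the integral to a weighted sum over quadruples of integers, and then extracting the required cancellation from the off-diagonal terms using spectral methods.

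First, I would apply the approximate functional equation, writing $\zeta(1/2+it)^2 = B(t) + \chi(1/2+it)^2 \overline{B(-t)} + O(T^{-A})$, where $B(t) = \sum_{n \geq 1} d(n) W(n/T) n^{-1/2-it}$ for a smooth rapidly decreasing weight $W$, and $\chi(s)$ is the standard gamma factor from the functional equation of $\zeta$. Since $|\chi(1/2+it)| = 1$ and by the $t \mapsto -t$ symmetry, bounding the original integral reduces (modulo negligible errors) to bounding
\[
\int_{-T}^T |B(t)|^2 \Bigl|\sum_{m \leq M} a_m m^{-it}\Bigr|^2 dt.
\]
Opening the squares gives a weighted sum over quadruples $(n_1, n_2, m_1, m_2)$ with $n_j \ll T$ and $m_j \leq M$, weighted by $d(n_1)d(n_2)a_{m_1}\overline{a_{m_2}}$, together with the oscillatory factor $\int_{-T}^T (n_2 m_2/n_1 m_1)^{it}\,dt$, of size $\asymp T$ when $n_1 m_1 = n_2 m_2$ and controlled otherwise by $|\log(n_1 m_1/n_2 m_2)|^{-1}$.

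The diagonal $n_1 m_1 = n_2 m_2$ reduces to a divisor sum and, via standard elementary estimates bounding the number of representations of an integer as a product in two ways, contributes $\ll M T^{1+\epsilon}\max_m |a_m|^2$, accounting for the leading factor of Watt's bound. For the off-diagonal contribution I would apply Voronoi summation to the two $n$-sums (exploiting that $d(n)$ is the coefficient of $\zeta^2$), which dualises the $n$-sums into sums of comparable length weighted by Bessel functions and carrying Kloosterman-sum coefficients. Averaging over the shifts induced by $m_1, m_2$ and then applying either Weil's individual bound or (to recover the precise shape of Watt's bound) Kuznetsov's trace formula with Deshouillers--Iwaniec-style spectral bounds for sums of Kloosterman sums produces the factor $(1 + M^2/T^{1/2})$: the threshold $M^2 \asymp T^{1/2}$ is the natural balance point at which the Voronoi-dual polynomial and the test polynomial have comparable length.

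The main obstacle is the spectral analysis of the off-diagonal contribution. One must perform the Voronoi summation with smooth weights and track oscillatory integrals carefully, and then obtain sharp control on sums of Kloosterman sums to recover the stated exponent; this is essentially the content of Watt's original proof, and any streamlined version would still require Kuznetsov's formula (or equivalently Motohashi's spectral decomposition of the fourth moment of $\zeta$) together with careful bookkeeping of the weights and ranges appearing after dualisation. A naive bound using Weil's estimate term-by-term would produce a weaker exponent than $M^2/T^{1/2}$, so the averaging over the spectral side is essential.
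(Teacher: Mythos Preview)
The paper does not prove this lemma at all: it simply cites \cite[Theorem 1]{Wat1995} and treats the result as a black box. Your proposal, by contrast, is a high-level sketch of the actual content of Watt's proof, so you are doing far more work than the paper does here.

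As a sketch of Watt's argument your outline is broadly accurate in spirit: one does replace $\zeta^2$ by a smoothed divisor sum via the approximate functional equation, open the squares, separate the diagonal, and then handle the off-diagonal via bounds for sums of Kloosterman sums coming from the Deshouillers--Iwaniec spectral large sieve. One minor inaccuracy is that Watt does not really proceed by applying Voronoi summation to both $n$-sums; rather he opens $d(n)=\sum_{ab=n}1$, applies Poisson summation in one variable, and arrives at bilinear Kloosterman sums directly, to which the Deshouillers--Iwaniec estimates apply. The Voronoi route you describe leads to the same Kloosterman structure but is not the path taken in \cite{Wat1995}. In any case, for the purposes of this paper a citation suffices.
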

%
%
%
%
\begin{proof}
See \cite[Theorem 1]{Wat1995}.
\end{proof}
%
%
%
%
\begin{lemma}[Approximate functional equation]\label{lem:AppFuncEq}
There is a smooth function $w_2(x) = w_2(x;T)$ supported on $[1/2,2]$ with $w_2^{(j)}(x) \ll_j 1$ such that
\[
\Bigl|\zeta\Big(\frac{1}{2}+it\Bigr)\Bigr|\ll 1 + \log T\sum_{N=2^j\le T^{1/2+o(1)}}\int_{|u|\leq \log T} \Bigl|\sum_{n}\frac{w_2(n/N)}{n^{1/2+it+iu}}\Bigr| du.
\]
\end{lemma}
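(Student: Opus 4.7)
My plan is to combine the smooth approximate functional equation, a smooth dyadic partition of unity, and Lemma \ref{lem:DiscreteToContinuous}. First, I would apply the smooth approximate functional equation to $\zeta(1/2+it)$ (in the style of \cite[Theorem 5.3]{IK2004}): for a suitable entire test function $G$ with $G(0)=1$ and rapid decay in vertical strips, one has
\[
\zeta(1/2+it)=\sum_n n^{-1/2-it}V_T(n)+\chi(1/2+it)\sum_n n^{-1/2+it}V_T(n)+O(T^{-100}),
\]
where $V_T$ is a smooth cutoff satisfying $V_T(y)=1+O_A((y/\sqrt{T})^A)$ for small $y/\sqrt{T}$ and $V_T(y)\ll_A(y/\sqrt{T})^{-A}$ for large $y/\sqrt{T}$, whose derivatives (after rescaling $y\mapsto y/\sqrt{T}$) are uniformly bounded in $T$, as checked via Stirling on the Mellin integral defining $V_T$. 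Since $|\chi(1/2+it)|=1$, the triangle inequality reduces matters to bounding a single such sum.

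Second, I would insert a smooth dyadic partition of unity. Fix a non-negative smooth bump $w_2$ supported in $[1/2,2]$ with $\sum_{N=2^j\ge 1}w_2(x/N)=1$ on $[1,\infty)$, and decompose dyadically. Contributions from $N\gg T^{1/2+o(1)}$ are negligible by the decay of $V_T$; for the remaining scales, on the support of $w_2(n/N)$ the factor $V_T(n)$ is a slowly varying smooth function of $n/N$ which can be absorbed into the bump, producing a single $T$-dependent weight $w_2(\,\cdot\,;T)$ with $w_2^{(j)}\ll_j 1$. This yields
\[
|\zeta(1/2+it)|\ll 1 + \sum_{N=2^j\le T^{1/2+o(1)}}\Bigl|\sum_n\frac{w_2(n/N)}{n^{1/2+it}}\Bigr|.
\]

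Third, I would apply Lemma \ref{lem:DiscreteToContinuous} to each of the $O(\log T)$ dyadic pieces to replace the point value by an integral over a short vertical interval, incurring a single factor of $\log T$ (with an acceptable error that gets absorbed into the leading ``$1$''); a minor refinement exploiting the smoothness of the coefficients $w_2(n/N)$ (whose Fourier transform in $n$ is rapidly decaying) shrinks the admissible integration window from $(\log N)^2$ down to $\log T$, as in the statement. Summing the resulting bounds over the dyadic scales then gives the conclusion.

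The main technical obstacle is the second step: arranging the decomposition to produce a \emph{single} smooth weight $w_2(\,\cdot\,;T)$ with uniformly bounded derivatives, independently of $N$. Via Stirling on the Mellin integral defining $V_T$, one verifies that $V_T(y)=V^\flat(y/\sqrt{T})+O(T^{-\delta})$ for a single smooth $V^\flat$ whose derivatives are uniformly bounded, so that at each dyadic scale $N$ the combined weight $V_T\cdot w_2(\,\cdot\,/N)$ collapses cleanly onto a bump in the variable $n/N$. The $O(\log\log T)$ transitional scales $N\asymp\sqrt{T}$, where the two scales merge, must be handled with extra care, either by a small perturbation of $w_2$ or by treating this handful of scales separately via a union bound; all remaining steps are standard.
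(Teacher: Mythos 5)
Your overall skeleton (approximate functional equation, dyadic decomposition, then manufacture the $u$-average) is reasonable, but the step you label a ``minor refinement'' is where the actual content of the lemma lies, and as described it does not work. Lemma \ref{lem:DiscreteToContinuous} gives a window $|u|\leq(\log N)^2\asymp\tfrac14(\log T)^2$, which is \emph{larger} than $\log T$; since the integrand is non-negative, a bound with the wider window does not imply the stated bound with window $\log T$. Shrinking the window is not a matter of smoothness of the coefficients $w_2(n/N)$: smoothness only narrows the frequency band of $u\mapsto\sum_n w_2(n/N)n^{-1/2-it-iu}$ to length $\log 4$, but any reproducing-kernel (point value $\leq$ short average) argument must pay a tail error of size $\sup|D|\cdot\int_{|u|>U}|\phi|$, and here $\sup|D|\ll N^{1/2}\approx T^{1/4}$ while a kernel whose Fourier transform is essentially $1$ on an interval can only have sub-exponential tails; at $U=\log T$ this tail error is polynomially large in $T$. (One can salvage a window like $\log T(\log\log T)^{O(1)}$ with Ingham-type kernels, but that proves a different statement.) The same difficulty infects your second step: the weight $V$ in the symmetric approximate functional equation depends on $t$ (through $y/\sqrt{t}$), so at the $\Theta(\log T)$ transitional scales near $\sqrt{T}$ the combined weight genuinely depends on $N$ and $t$, whereas the lemma requires one fixed $w_2(\cdot\,;T)$ for every scale; a ``small perturbation of $w_2$'' or a ``union bound'' does not produce that.

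The paper avoids both problems by never absorbing the smoothing weight and never invoking a point-to-average lemma. It starts from the exponentially damped sum $\sum_n n^{-1/2-it}\exp(-n/x)$ (via \cite[Theorem 12.2]{Iwa2014}, averaged over $x\asymp T^{1/2}$), applies the fixed dyadic partition $w_0$ of Lemma \ref{lem:Partition}, and then \emph{Mellin-separates} $\exp(-n/x)$ on the line $\mathrm{Re}(s)=1/\log T$. This produces the $u$-integral directly, with kernel $\Gamma(1/\log T+iu)$: its exponential decay (rate $\pi/2>1/4$) justifies truncation at $|u|\leq\log T$ against the trivial bound $N^{1/2}\ll T^{1/4+o(1)}$, its size near $u=0$ gives the $\log T$ factor, and the leftover $(n/N)^{-1/\log T}$ is absorbed into the single weight $w_2(x;T)=w_0(x)x^{-1/\log T}$, uniformly smooth in $T$. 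If you want to keep your set-up, the correct fix for your ``main technical obstacle'' is exactly this: separate $V$ (or the damping factor) by Mellin inversion and truncate using its rapid decay, rather than absorbing it into the bump and then trying to shrink the window of Lemma \ref{lem:DiscreteToContinuous}.
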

%
%
%
%
\begin{proof}
We apply \cite[Theorem 12.2]{Iwa2014} with $\frac{T^{1/2}}{\sqrt{2\pi}}\leq x \leq 2\frac{T^{1/2}}{\sqrt{2\pi}}$, then multiply both sides by $\frac{1}{x}$ and integrate over the range of $x$. After a change of variables we derive
\begin{align*}
\Bigr|\zeta\Bigl(\frac{1}{2}+it\Bigr)\Bigl| \ll \int_{T^{1/2}/2\sqrt{2\pi}}^{2T^{1/2}/\sqrt{2\pi}} \left|\sum_{n\geq 1} \frac{1}{n^{1/2+it}} \exp (-n/x) \right| \frac{dx}{x} + 1.
\end{align*}
We use Lemma \ref{lem:Partition} to obtain
\begin{align*}
\Bigr|\zeta\Bigl(\frac{1}{2}+it\Bigr)\Bigl| \ll \sum_{N = 2^j \leq T^{1/2}(\log T)^2}\int_{T^{1/2}/2\sqrt{2\pi}}^{2T^{1/2}/\sqrt{2\pi}} \left|\sum_{n\geq 1} \frac{w_0(n/N)}{n^{1/2+it}} \exp (-n/x) \right| \frac{dx}{x} + 1.
\end{align*}
We then perform Mellin inversion on the factor $\exp(-n/x)$ to deduce
\begin{align*}
\left|\sum_{n\geq 1} \frac{w_0(n/N)}{n^{1/2+it}} \exp (-n/x) \right| &\ll \int_{-\infty}^\infty \left|\Gamma \left(\frac{1}{\log T} + iu \right) \right| \left|\sum_{n\geq 1} \frac{w_0(n/N) (n/N)^{-1/\log T}}{n^{1/2+it+iu}} \right| du.
\end{align*}
By Stirling's formula we can truncate the integral to $|u| \leq \log T$ at the cost of an admissible error, and then we use a trivial bound for the Gamma function.
\end{proof}
%
%
%
%
We now turn to the proof of Proposition \ref{prop:FifthMoment}. By dyadic subdivision it suffices to show
\begin{align*}
\int_T^{2T} \Bigl| \zeta \Bigl(\frac{1}{2}+it \Bigr) \Bigr|^{5}dt \ll_\epsilon T^{1+O(\varepsilon_0)+\epsilon}
\end{align*}
for all $T$ sufficiently large. We decompose the fifth moment as $5=4+1$, and represent one factor of $\zeta(\frac{1}{2}+it)$ with the approximate functional equation. We then manipulate the resulting Dirichlet polynomials so that the moment integral can be handled by a combination of Watt's mean value theorem and short zero-detecting polynomials.

By Lemma \ref{lem:AppFuncEq}, it suffices to show
\begin{align*}
\int_T^{2T} \Bigl| \zeta \Bigl(\frac{1}{2}+it \Bigr) \Bigr|^{4} \Bigl| \sum_n \frac{ \omega(n/N) }{n^{1/2+it}}\Bigr| dt \ll_\varepsilon T^{1+O(\varepsilon_0)+\varepsilon}
\end{align*}
for $100 \leq N \ll T^{1/2+o(1)}$ (one can immediately use a trivial bound and a fourth moment bound if $N< 100$). Here $\omega(x) = w_2(x) x^{-iu}$ for some fixed real $|u| \leq \log T$. Note that $\omega^{(j)}(x) \ll_j (\log T)^j$. We shall write $\Omega(s)$ for the Mellin transform of $\omega$.

We now split up the Dirichlet polynomial according to the prime factors of $n$. Write $n=n_1n_2$ with $P^+(n_1)\le X^\epsilon<P^-(n_2)$. We split our attention depending on whether $n_1>T^{1/4}$ or not. Thus
\[
\sum_{n}\frac{\omega(n/N)}{n^s}=\sum_{\substack{ n_1\le T^{1/4}\\ P^+(n_1)\le T^\epsilon}}\sum_{P^-(n_2)>T^\epsilon}\frac{\omega(n_1n_2/N)}{(n_1n_2)^s}+\sum_{\substack{ n_1>T^{1/4}\\ P^+(n_1)\le T^\epsilon}}\sum_{P^-(n_2)>T^\epsilon}\frac{\omega(n_1n_2/N)}{(n_1n_2)^s}.
\]
Therefore, it suffices to show that for each $N<T^{1/2+o(1)}$ that
\begin{align*}
\int_T^{2T} \Bigl| \zeta \Bigl(\frac{1}{2}+it \Bigr) \Bigr|^{4}\Bigl|\sum_{\substack{ n_1\le T^{1/4}\\ P^+(n_1)\le T^\epsilon}}\sum_{P^-(n_2)>T^\epsilon}\frac{\omega(n_1n_2/N)}{(n_1n_2)^s} \Bigr| dt& \ll_\epsilon T^{1+O(\varepsilon_0)+\epsilon},\\
\int_T^{2T} \Bigl| \zeta \Bigl(\frac{1}{2}+it \Bigr) \Bigr|^{4}\Bigl|\sum_{\substack{ n_1>T^{1/4}\\ P^+(n_1)\le T^\epsilon}}\sum_{P^-(n_2)>T^\epsilon}\frac{\omega(n_1n_2/N)}{(n_1n_2)^s} \Bigr| dt &\ll_\epsilon T^{1+\epsilon}.
\end{align*}
We first deal with the terms when $n_1>T^{1/4}$, which can be handled unconditionally by factoring the Dirichlet polynomials and using Watt's mean value theorem.
%
%
%
%
\begin{lemma}
For $100\leq N\le T^{1/2+o(1)}$, we have
\[
\int_T^{2T} \Bigl| \zeta \Bigl(\frac{1}{2}+it \Bigr) \Bigr|^{4}\Bigl|\sum_{\substack{ n_1>T^{1/4}\\ P^+(n_1)\le T^\epsilon}}\sum_{P^-(n_2)>T^\epsilon}\frac{\omega(n_1n_2/N)}{(n_1n_2)^s} \Bigr| dt \ll_\varepsilon T^{1+\varepsilon}.
\]
\end{lemma}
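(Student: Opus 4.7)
My plan is to exploit the smoothness of $n_1$ (which has $n_1 > T^{1/4}$ and $P^+(n_1) \le T^\varepsilon$) to factor the inner Dirichlet polynomial into short pieces, then apply Watt's mean value theorem combined with Cauchy--Schwarz.

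First, I would decouple $n_1$ and $n_2$ in the weight $\omega(n_1n_2/N)$ by Mellin inversion: writing $\omega(x) = \frac{1}{2\pi i}\int_{(0)}\Omega(z)\,x^{-z}\,dz$, the rapid decay of $\Omega$ allows truncating the contour to $|\mathrm{Im}(z)| \le (\log T)^{O(1)}$ at negligible cost. The inner double sum then becomes a short Mellin integral of a product $E_1(s+z)E_2(s+z)$, where $E_1$ is a Dirichlet polynomial over $T^\varepsilon$-smooth $n_1 \in (T^{1/4}, 2N]$ and $E_2$ is over $T^\varepsilon$-rough $n_2 \in [1, 2N/T^{1/4}]$. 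Since $N \le T^{1/2+o(1)}$, the polynomial $E_2$ already has length $\le T^{1/4+o(1)}$.

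Second, I would factor $E_1$ using the smoothness of $n_1$. Greedily multiplying prime factors of $n_1$ (each of size $\le T^\varepsilon$) until the product first exceeds $T^{1/4-\varepsilon}$ produces a divisor $m \in (T^{1/4-\varepsilon}, T^{1/4}]$ of $n_1$. Writing $n_1 = mk$ and summing over all such factorizations (at a cost of $\tau(n_1) \le T^{o(1)}$ overcounting), $E_1$ is realized, up to $T^{o(1)}$, as a product $M(s)K(s)$ of Dirichlet polynomials each of length $\le T^{1/4+o(1)}$ with divisor-bounded coefficients. Thus, modulo the Mellin integral and a $T^{o(1)}$ factor, $D(t)$ is a product of three short Dirichlet polynomials $M, K, E_2$ each of length $\le T^{1/4+o(1)}$.

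Third, after applying Fubini and the triangle inequality to the Mellin integral, Cauchy--Schwarz in $t$ reduces matters to bounding integrals $\int_T^{2T} |\zeta(1/2+it)|^4 |F(1/2+it)|^2\,dt$ for a suitable pairing $F$ of two of the three short polynomials. One then applies Watt's mean value theorem (Lemma \ref{lem:WattMVT}) to each such integral, taking care that the length of $F$ stays in the regime where the auxiliary factor $(1+M^2/T^{1/2})$ in Watt is bounded.

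The main obstacle is arranging the pairings so that every Dirichlet polynomial entering Watt's MVT has length $\le T^{1/4}$: a naive pairing of two of the three short factors would produce a length-$T^{1/2}$ polynomial for which Watt's auxiliary factor swamps the bound. Avoiding this requires careful use of Cauchy--Schwarz (possibly iterated or multilinear), combined with the $T^{o(1)}$ coefficient bounds from the smooth factorization, to keep the mean-value integrals in the favourable regime of Watt's MVT and thereby obtain the target $T^{1+\varepsilon}$.
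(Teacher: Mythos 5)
Your overall strategy (decouple the weight by Mellin inversion, exploit smoothness of $n_1$ to factor into short polynomials, then Cauchy--Schwarz plus Watt) is the same as the paper's, but two steps do not work as written. First, the factorization of $E_1$ is not legitimate as stated: summing over \emph{all} factorizations $n_1=mk$ with $m\in(T^{1/4-\epsilon},T^{1/4}]$ produces a Dirichlet polynomial whose coefficients count admissible factorizations, and since the terms $n_1^{-1/2-it}$ oscillate, this is neither equal to $E_1$ up to $T^{o(1)}$ nor a pointwise majorant of it -- ``overcounting by $\tau(n_1)$'' has no meaning inside an absolute value of an oscillating sum. If instead you keep the canonical greedy divisor $m$, you get an identity, but the pairs $(m,k)$ do not form a product set: they are linked by conditions of the shape $P^+(m)\le P^-(k)$ together with the size condition pinning down where the cut occurs, and turning the sum into a genuine product of Dirichlet polynomials requires separating these coupled conditions by Perron/Mellin contour integrals after summing over the pivot prime power. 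This is exactly what the paper does (it writes $n_1n_2=np^em$ with $P^+(n)<p<P^-(m)$, $n\le T^{1/4}<np^e$, $p\le T^\epsilon$, and separates the constraints with four auxiliary integrals); it is repairable, but it is a missing piece, not a detail.

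The more serious gap is the endgame that you flag but do not resolve. Because you decouple $n_1$ from $n_2$ at the outset, the joint constraint $n_1n_2\ll N$ is destroyed, so $K$ and $E_2$ become independent polynomials and cannot later be merged into one short factor (even though originally $kn_2=n_1n_2/m\ll N/T^{1/4-\epsilon}\le T^{1/4+\epsilon+o(1)}$). You are then left with three essentially independent factors of length about $T^{1/4}$, and with only $|\zeta|^4$ available every Cauchy--Schwarz or H\"older pairing pushes a polynomial of length about $T^{1/2}$ into Watt's theorem, where the factor $1+M^2/T^{1/2}$ gives $T^{3/2+\epsilon}$ rather than $T^{1+\epsilon}$; no iterated or multilinear Cauchy--Schwarz fixes this. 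The resolution is structural and must be built into the decomposition: cut the \emph{full product} $n_1n_2$ (not just $n_1$) at $T^{1/4}$, using $P^+(n_1)\le T^\epsilon$ and $n_1>T^{1/4}$ to guarantee the pivot prime power $p^e$ has $p\le T^\epsilon$. Then the part below the cut has length $\le T^{1/4}$, the part above the cut (which absorbs the rough variable $n_2$) has length $\le T^{1/4+\epsilon}$ because the total is $\le T^{1/2+o(1)}$, and the pivot factor is bounded trivially by $O(T^{\epsilon/2})$. With only two long factors one writes $|\zeta|^4|ABC|\ll T^{\epsilon/2}\bigl(|\zeta|^4|A|^2+|\zeta|^4|C|^2\bigr)$ and both applications of Watt are at length $\le T^{1/4+\epsilon}$; this two-long-factors-plus-trivial-pivot arrangement is the essential point of the paper's proof and is absent from your plan.
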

%
%
%
%
\begin{proof}
We write the prime factorization of $n_1n_2$ as $n_1n_2=p_1^{e_1}\cdots p_r^{e_r}$ with $p_1 < p_2< \dots < p_r$. Then there is a unique $j$ such that $p_1^{e_1}\cdots p_{j-1}^{e_{j-1}}\le T^{1/4}<p_1^{e_1}\cdots p_{j}^{e_{j}}$ since $n_1>T^{1/4}$, and moreover $p_j\leq T^\epsilon$ since $P^+(n_1)\le T^\epsilon$. Thus we may rewrite $n_1n_2$ as $n_1n_2=n p^e m$ for some $n,p,e,m$ with $P^+(n)<p<P^-(m)$ and $n\le T^{1/4}<p^en$. Thus we see that
\[
\sum_{\substack{ n_1>T^{1/4}\\ P^+(n_1)\le T^\epsilon}}\sum_{P^-(n_2)>T^\epsilon}\frac{\omega(n_1n_2/N)}{(n_1n_2)^s}=\sum_{\substack{n,m,p^e\\ P^+(n)<p<P^-(m)\\ n\le T^{1/4}<n p^e\\ p\leq T^\epsilon}}\frac{\omega(n p^e m/N)}{(n p^e m)^s}.
\]
We now wish to separate the variables $n,m$ and $p^e$. We do this by using Mellin inversion and Perron's formula. For integers $n,p^e,m\leq T^{1/2+\epsilon}$, we see that 
\[
\Bigl|\log\Bigl(\frac{p+1/2}{P^+(n)}\Bigr)\Bigr|,\Bigl|\log\Bigl(\frac{P^-(m)}{p+1/2}\Bigr)\Bigr|,\Bigl|\log\Bigl(\frac{\lfloor T^{1/4}\rfloor +1/2}{n p^e}\Bigr)\Bigr|\gg \frac{1}{T^{1/2+\epsilon}}.
\]
Thus, setting $\delta = \frac{1}{\log T}$ and using the rapid decay of $\Omega(s)$ we find
\begin{align*}
&\sum_{\substack{n,p^e,m\\ P^+(n)<p<P^-(m)\\ n\le T^{1/4}<np^e\\ p<T^\epsilon}}\frac{\omega(np^em/N)}{(np^em)^{s}} = \frac{1}{(2\pi)^4}\int\limits_{\substack{|u_j|\le T^{10} \\ 1\leq j\leq 4}} \frac{A(s)B(s)C(s)\Omega(iu_4)N^{iu_4}}{(\lfloor T^{1/4}\rfloor +1/2)^{\delta + iu_3}}\prod_{j=1}^3 (\delta + iu_j)^{-1}d\textbf{u}+O\Bigl(\frac{1}{T^2}\Bigr),
\end{align*}
where we have defined (suppressing the dependency on $u_1,u_2,u_3,u_4$)
\begin{align*}
A(s)&:=\sum_{n\leq T^{1/4}}\frac{a_{n}}{n^s},\quad &B(s)&:=\sum_{e\le \log{T}}\sum_{p<T^{\epsilon}}\frac{b_{p,e}}{p^{es}},\quad &C(s)&:=\sum_{m<T^{1/4+\epsilon} }\frac{c_{m}}{m^s},\\
a_{n}&:=P^+(n)^{-\delta-i u_1}n^{\delta+ iu_3-iu_4},\quad &b_{p,e}&:=(p+1/2)^{iu_1-iu_2}p^{e(\delta+ iu_3-iu_4)},\quad &c_{m}&:=P^-(m)^{\delta+iu_2}m^{-iu_4}.
\end{align*}
The $O(T^{-2})$ term is clearly negligible, and so can be ignored. By the triangle inequality (and the fact the integral of $|\Omega(iu_4)|\prod_{j=1}^3 |\delta + iu_j|^{-1}$ over $u_1,u_2,u_3,u_4$ is $T^{o(1)}$), we find that
\begin{align*}
&\int_T^{2T} \Bigl| \zeta \Bigl(\frac{1}{2}+it \Bigr) \Bigr|^{4}\Bigl|\sum_{\substack{ n_1>T^{1/4}\\ P^+(n_1)\le T^\epsilon}}\sum_{P^-(n_2)>T^\epsilon}\frac{\omega(n_1n_2/N)}{(n_1n_2)^s} \Bigr| dt \\
&\ll T^{o(1)}\sup_{u_1,u_2,u_3,u_4}\int_T^{2T} \Bigl| \zeta \Bigl(\frac{1}{2}+it \Bigr) \Bigr|^{4}\Bigl|A\Bigl(\frac{1}{2}+it\Bigr)B\Bigl(\frac{1}{2}+it\Bigr)C\Bigl(\frac{1}{2}+it\Bigr)\Bigr| dt +O(1).
\end{align*}
We now note that trivially we have the bound
\[
B\Bigl(\frac{1}{2}+it\Bigr)=\sum_{e\le \log{T}}\sum_{p\le T^\epsilon}\frac{b_{p,e}}{p^{e/2+eit}}\ll T^{\epsilon/2},
\]
so the $B$ factor is always acceptably small. Thus, by Cauchy-Schwarz, it suffices to show 
\[
\sup_{u_1,u_2,u_3,u_4}\int_T^{2T} \Bigl| \zeta \Bigl(\frac{1}{2}+it \Bigr) \Bigr|^{4}\Bigl|A\Bigl(\frac{1}{2}+it\Bigr)\Bigr|^{2}\ll_\epsilon T^{1+\epsilon},
\]
and similarly for $C(1/2+it)$ in place of $A(1/2+it)$. But this follows immediately from Lemma \ref{lem:WattMVT} since these Dirichlet polynomials have length at most $T^{1/4+\epsilon}$.
\end{proof}
%
%
%
%
We now consider the terms with $n_1\le T^{1/4}$. Since $n_1n_2 \leq T^{1/2+o(1)}$ we think of $n_2$ as being somewhat large. By factoring $n_2$ we introduce sums over primes, and by Mellin inversion we turn the sums over primes into sums over zeros. This transforms the moment integral into an integral centred on non-trivial zeros. We then use Hypothesis $\mathcal{F}$ and short zero-detecting polynomials to control the integral centred on zeros by sums which can be handled with Watt's mean value theorem.

The next result is the unconditional lemma to reduce to an integral centred on zeros $\rho$.
%
%
%
%
\begin{lemma}[Reduction to sum over zeros]
We have
\begin{align*}
&\int_T^{2T} \left|\zeta \Bigl(\frac{1}{2}+it \Bigr) \right|^{4}\Bigl|\sum_{\substack{ n_1\le T^{1/4}\\ P^+(n_1)\le T^\epsilon}}\sum_{P^-(n_2)>T^\epsilon}\frac{\omega(n_1n_2/N)}{(n_1n_2)^s}\Bigr| dt \ll  T^{1+\epsilon} \\
&+T^{\epsilon}\sup_{\substack{|u|<T^{\epsilon}\\ N_1\ll T^{1/4}}}\sum_{T/2\le |\rho|\le 3T}\Bigl(\frac{T^{1/2}}{N_1}\Bigr)^{\textup{Re}(\rho)-1/2}\int_{|t-\textup{Im}(\rho)|<T^{3\epsilon}}\Bigl|\zeta\Bigl(\frac{1}{2}+it\Bigr)\Bigr|^4 \Bigl|\sum_{\substack{N_1\le n_1<2N_1\\ P^+(n_1)<T^\epsilon\\ n_1<T^{1/4}}}\frac{1}{n_1^{1/2+it+iu}}\Bigr| dt.
\end{align*}
\end{lemma}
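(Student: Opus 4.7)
The plan is to separate the sum over $n_1$ from the sum over $n_2$, represent the inner sum over $n_2$ via Mellin inversion so that $\zeta$ appears explicitly, and then reduce the resulting contour integral to a main term plus a sum localized at the non-trivial zeros of $\zeta$.

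I would begin by dyadically localizing $n_1\sim N_1$ with $N_1\le T^{1/4}$, which costs only an $O(\log T)$ factor. For each such $N_1$, I would apply Mellin inversion to the smooth weight $\omega(n_1n_2/N)$ and use the identity $\sum_{P^-(n)>T^\epsilon}n^{-w}=\zeta(w)P(w)$, where $P(w)=\prod_{p\le T^\epsilon}(1-p^{-w})$, to write
\[
\Sigma_{N_1}(s)=\frac{1}{2\pi i}\int_{(c)}\Omega(z)\,N^z\,A_{N_1}(s+z)\,\zeta(s+z)P(s+z)\,dz,
\]
where $A_{N_1}(w)=\sum_{n_1\sim N_1,\ P^+(n_1)\le T^\epsilon}n_1^{-w}$ is the short $n_1$-polynomial and $c$ is initially in the region of absolute convergence. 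Shifting the contour past the simple pole of $\zeta(s+z)$ at $z=1-s$ picks up a main term proportional to $N^{1-s}\Omega(1-s)A_{N_1}(1)P(1)$; inserting this main term into $\int_T^{2T}|\zeta|^4|\cdot|\,dt$, Cauchy--Schwarz followed by Watt's mean value theorem (Lemma \ref{lem:WattMVT}) applied to the short polynomial $A_{N_1}$ (whose length $N_1\le T^{1/4}$ is well within Watt's range) yields the $T^{1+\epsilon}$ summand.

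To extract the sum over zeros from the remaining residual contour integral, I would rewrite $\zeta(s+z)$ via a smoothed Perron-type identity involving $1/\zeta$---which is the place where zeros of $\zeta$ become genuine poles of the integrand---or equivalently, apply the approximate functional equation and then a second Perron inversion on the long piece. The residual integral then decomposes as a sum over non-trivial zeros $\rho$ with $T/2\le|\rho|\le 3T$, each contributing a residue of the form $(N/n_1)^{\rho-s}\Omega(\rho-s)A_{N_1}(\rho)$. The modulus of such a contribution is bounded by $(T^{1/2}/N_1)^{\textup{Re}(\rho)-1/2}\,|A_{N_1}(1/2+it+iu)|$ for some $|u|\le T^\epsilon$ arising from the smoothing, and the rapid decay of $\Omega(\rho-s)$ localizes $t$ to $|t-\textup{Im}(\rho)|\ll T^{3\epsilon}$.

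The main obstacle is this last step: zeros of $\zeta$ are not poles of the integrand as initially written, so the passage to a sum over $\rho$ requires a non-trivial identity---most naturally, inserting a copy of $\zeta\cdot\zeta^{-1}$ and Perron-inverting the $1/\zeta$ factor on a contour that crosses the critical line, whose encountered poles are precisely the non-trivial zeros of $\zeta$. Tracking the various cross-terms produced by this manoeuvre carefully, many of which can be absorbed into the $T^{1+\epsilon}$ main term by further appeals to Watt's bound on short polynomials, should yield exactly the claimed zero sum. Assembling the pieces, collecting the zero contributions, taking absolute values, and supremising over the dyadic parameter $N_1$ and the shift $u$ then produces the stated bound.
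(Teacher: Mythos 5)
Your central mechanism for producing the sum over zeros does not work, and this is exactly the step you flag as "the main obstacle". Writing $\sum_{P^-(n)>T^\epsilon}n^{-w}=\zeta(w)\prod_{p\le T^\epsilon}(1-p^{-w})$ is fine, but the plan built on it fails twice over. First, the finite Euler product $P(w)=\prod_{p\le T^\epsilon}(1-p^{-w})$ is, when expanded, a Dirichlet polynomial of length $\exp(\Theta(T^\epsilon))$, and on or near the critical line it can be as large as $\exp\bigl(cT^{\epsilon/2}/\log T\bigr)$; so once you shift the $z$-contour away from the region of absolute convergence there is no useful pointwise bound on the integrand, and the "residual contour integral" cannot be controlled at polynomial strength. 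Second, zeros of $\zeta(s+z)$ are not poles of your integrand, and the fix you suggest (inserting $\zeta\cdot\zeta^{-1}$ and Perron-inverting the $1/\zeta$ factor) produces residues at $\rho$ involving $1/\zeta'(\rho)$ (and worse at possible multiple zeros), for which no bound is available unconditionally or under Hypothesis $\mathcal{F}$, and it further requires lower bounds for $|\zeta|$ on the shifted contour, which are equally unavailable. Your claimed residue shape $A_{N_1}(\rho)$, later silently replaced by $A_{N_1}(1/2+it+iu)$, is a further unjustified step: nothing in the argument evaluates the $n_1$-polynomial at a zero and then moves it back to the critical line. (A minor point: the residue at $z=1-s$ is negligible by the rapid decay of $\Omega$, since $|\mathrm{Im}(1-s)|\asymp T$; the $T^{1+\epsilon}$ term does not come from that pole but from the $n_2=1$ contribution and secondary terms.)

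The paper's route avoids all of this by never treating the rough-number sum as a single $\zeta$-factor. Since $n_2\le T$ and $P^-(n_2)>T^\epsilon$, the integer $n_2$ has at most $1/\epsilon$ prime factors, so after inclusion--exclusion and a smooth dyadic partition the sum over $n_2$ factors into a bounded number of single-prime polynomials $P_j(s)=\sum_p w_0(p/P_j)p^{-e_js}$, times the short $n_1$-polynomial, with the variables separated by one Mellin inversion in $\Omega$. Each prime polynomial with $e_j=1$ is rewritten as a $\Lambda$-sum and expanded by Mellin inversion against $-\zeta'/\zeta$; the zeros enter as the poles of $-\zeta'/\zeta$, with completely explicit residues $P_j^{\rho-\cdots}\widetilde{W}_0(\cdots)$, giving the bound $\sum_{|\gamma-(t+v)|\le T^{o(1)}}P_j^{\beta-1/2}+O(1)$. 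The factor $(T^{1/2}/N_1)^{\mathrm{Re}(\rho)-1/2}$ then comes from $P_1\cdots P_J\ll N/N_1\ll T^{1/2+\epsilon}/N_1$ together with a zero-counting bound for all but one of the zero sums, and the localization $|t-\mathrm{Im}(\rho)|<T^{3\epsilon}$ comes from the rapid decay of $\widetilde{W}_0$ and the Perron truncations. If you want to salvage your write-up, you need to replace the $\zeta\cdot P$ factorization and the $1/\zeta$ insertion with this decomposition into boundedly many prime sums treated via the explicit formula.
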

%
%
%
%
\begin{proof}
The contribution from $n_2=1$ is $O(T^{1+o(1)})$ by Cauchy-Schwarz and Lemma \ref{lem:WattMVT}, so we may assume $n_2 > 1$. We expand the $n_2$ sum according to the number of distinct prime factors of $n_2$ (which must be at most $1/\epsilon$ since the sum is supported on $n_2\le T$ with $P^-(n_2)\ge T^\epsilon$). By inclusion-exclusion, there are constants $c_{\mathbf{e}}=c_{J,e_1,\dots,e_\ell}\ll_\epsilon 1$ such that
\[
\sum_{\substack{P^-(n_2)>T^\epsilon \\ n_2 > 1}}\frac{\omega(n_1n_2/N)}{(n_1n_2)^s}=\sum_{1\leq J\le \frac{1}{\epsilon}}\sum_{\substack{e_1,\dots,e_J\ge 1}}c_{\mathbf{e}}\sum_{T^\epsilon< p_1,\dots,p_J}\frac{\omega(n_1 p_1^{e_1}\cdots p_J^{e_J}/N)}{(n_1p_1^{e_1}\cdots p_J^{e_J})^s}.
\]
We use Lemma \ref{lem:Partition} to apply a smooth partition of unity $1=\sum_{P=2^r}w_0(p/P)$ to each variable $p_1,\dots,p_J$. We also restrict $n_1$ to lie in dyadic intervals $[N_1,2N_1)$. This gives
\[
\sum_{\substack{n_1\leq T^{1/4}\\ P^+(n_1)\le T^{\epsilon}}}\sum_{T^\epsilon< p_1,\dots,p_J}\frac{\omega(n_1 p_1^{e_1}\cdots p_J^{e_J}/N)}{(p_1^{e_1}\cdots p_J^{e_J})^s}=\sum_{N_1=2^{j_0}\ll T^{1/4}}\sum_{\substack{P_1=2^{r_1},\dots,P_J=2^{r_J}}}S(s),
\]
where
\[
S(s):=\sum_{T^\epsilon< p_1,\dots,p_J}\sum_{\substack{N_1\le n_1<2N_1\\ P^+(n_1)\leq T^\epsilon\\ n_1\leq T^{1/4}}}\frac{\omega(n_1 p_1^{e_1}\cdots p_J^{e_J}/N)w_0(p_1/P_1)\cdots w_0(p_J/P_J)}{(n_1p_1^{e_1}\cdots p_J^{e_J})^s}.
\]
With these restrictions, we see there is no contribution unless $N_1 P_1\cdots P_J\ll N$. We now separate variables. By Mellin inversion and the rapid decay of $\Omega$, we have for $\text{Re}(s)\in[0,1]$
\begin{align*}
S(s)&=\frac{1}{2\pi i}\int_{-i\infty}^{i\infty}\Omega(z)P_1(s+z)\cdots P_J(s+z)N(s+z) dz\\
&=\frac{1}{2\pi} \int_{-T^{\epsilon}}^{T^{\epsilon}}\Omega(iu)P_1(s+iu)\cdots P_J(s+iu)N(s+iu) du+O(T^{-1}),
\end{align*}
where
\[
P_j(s):=\sum_{T^\epsilon<p_j}\frac{w_0(p_j/P_j)}{p_j^{e_js}},\qquad N(s):=\sum_{\substack{N_1\le n_1<2N_1\\ P^+(n_1)\leq T^\epsilon\\ n_1\leq T^{1/4}}}\frac{1}{n_1^s}.
\]
The $O(T^{-1})$ term above makes a negligible contribution, and so can be ignored.

Since there are $O(T^{o(1)})$ choices of $J,\ell,e_1,\dots,e_\ell,N_1,P_1,\dots,P_J$ and $u$ is restricted to an interval of length $T^\epsilon$ and $\Omega(iu)\ll 1$, we see that it suffices to show that for any choice of $J,\ell,e_1,\dots,e_\ell,N_1,P_1,\dots,P_J$ and $u$ under consideration we have
\begin{align*}
&\int_T^{2T}\Bigl|\zeta\Bigl(\frac{1}{2}+it\Bigr)\Bigr|^4\Bigl|P_1\Bigl(\frac{1}{2}+i(t+u)\Bigr)\cdots P_J\Bigl(\frac{1}{2}+i(t+u)\Bigr)N\Bigl(\frac{1}{2}+i(t+u)\Bigr)\Bigr| dt\\
&\ll 
T^{1+\epsilon}+T^{\epsilon}\sum_{T/2\le |\rho|\le 3T}\Bigl(\frac{T^{1/2}}{N_1}\Bigr)^{\text{Re}(\rho)-1/2}\int_{|t-\text{Im}(\rho)|<T^{3\epsilon}}\Bigl|\zeta\Bigl(\frac{1}{2}+it\Bigr)\Bigr|^4 \Bigl|N\Bigl(\frac{1}{2}+it+iu\Bigr)\Bigr| dt.
\end{align*}
If $e_j>1$ then we see that
\[
P_j\Bigl(\frac{1}{2}+it\Bigr)\ll \sum_{p_j \asymp P_j}\frac{1}{p_j}\ll 1.
\]
Thus we restrict our attention to $P_j$ with $e_j=1$. Similarly, we see that if $e_j=1$
\[
P_j\Bigl(\frac{1}{2}+it\Bigr)=\sum_{T^\epsilon<n}\frac{\Lambda(n)w_0(n/P_j)}{n^{1/2+it}\log{n}}+O(1).
\]
To simplify notation we let
\[
\widetilde{w}_0(t)=\frac{w_0(t)}{\log{t}+\log{P_j}},
\]
which is smooth, supported on $[1/2,2]$ and satisfies $|\widetilde{w}_0^{(j)}(t)|\ll_j 1$. We now use Perron's formula to remove the condition $T^\epsilon<n$ (which only intrudes if $P_j \asymp T^\epsilon$). Let $T_0=\lfloor T^\epsilon\rfloor+1/2$. If $P_j \asymp T^\epsilon$ we have
\begin{equation}\label{eq:Pj}
\begin{split}
P_j\Bigl(\frac{1}{2}+it\Bigr)&=\sum_{T^{\epsilon}<n}\frac{\Lambda(n)\widetilde{w}_0(n/P_j)}{n^{1/2+it}}+O(1)\\
&=\frac{1}{2\pi i}\int_{-T^{2\epsilon}}^{T^{2\epsilon}}\Bigl((2P_j)^{iv}-T_0^{iv}\Bigr)\Bigl(\sum_{n}\frac{\Lambda(n)\widetilde{w}_0(n/P_j)}{n^{1/2+i(v+t)}}\Bigr)\frac{dv}{v}+O(1).
\end{split}
\end{equation}
However, using Mellin inversion again and moving the line of integration, we see that
\begin{align*}
\sum_{n}\frac{\Lambda(n)\widetilde{w}_0(n/P_j)}{n^{1/2+i(v+t)}}&=\frac{1}{2\pi i}\int_{2-i\infty}^{2+i\infty} -P_j^{s}\widetilde{W}_0(s)\frac{\zeta'}{\zeta}\Bigl(\frac{1}{2}+i(v+t)+s\Bigr)ds\\
&=P_j^{1/2-i(v+t)}\widetilde{W}_0\Bigl(\frac{1}{2}-i(v+t)\Bigr)-\sum_{\rho}P_j^{\rho-1/2-i(v+t)}\widetilde{W}_0\Bigl(\rho-\frac{1}{2}-i(v+t)\Bigr)\\
&\qquad+\frac{1}{2\pi i}\int_{-2-i\infty}^{-2+i\infty} -P_j^{s}\widetilde{W}_0(s)\frac{\zeta'}{\zeta}\Bigl(\frac{1}{2}+i(v+t)+s\Bigr)ds.
\end{align*}
By the rapid decay of $\widetilde{W}_0$ and the bound $|\zeta'/\zeta(s)|\ll \log{|s|}$ for $\text{Re}(s)=-3/2$, we see that for $|v+t|\gg T^{o(1)}$, where the $o(1)$ term goes to zero sufficiently slowly with $T$, we have
\[
\sum_{n}\frac{\Lambda(n)\widetilde{w}_0(n/P_j)}{n^{1/2+i(v+t)}}\ll \sum_{\substack{\rho=\beta+i\gamma\\ |\gamma-i(v+t)|\le T^{o(1)}}}P_j^{\beta-1/2}+O(1).
\]
Substituting this into \eqref{eq:Pj}, we find that if $P_j \asymp T^\epsilon$ then
\[
P_j\Bigl(\frac{1}{2}+it\Bigr)\ll T^{o(1)}\int_{|v|\leq T^{2\epsilon}} \frac{1}{1+|v|} \sum_{\substack{\rho = \beta+i\gamma\\ |\gamma-(t+v)|\le T^{o(1)}}}P_j^{\beta-1/2}+O(T^{o(1)}).
\]
If $P_j \gg T^\epsilon$ we may obtain the same upper bound by using Lemma \ref{lem:DiscreteToContinuous} and then arguing with Mellin inversion as above. It follows that
\begin{align*}
&P_1\Bigl(\frac{1}{2}+i(t+u)\Bigr)\cdots P_J\Bigl(\frac{1}{2}+i(t+u)\Bigr)  \\
&\ll T^{o(1)}+ T^{o(1)}\int_{\substack{|v_j|\leq T^{2\epsilon} \\ 1\leq j\leq J}} \prod_{j=1}^J \frac{1}{1+|v_j|}\sum_{\substack{\rho_1,\ldots,\rho_J \\ \rho_j = \beta_j + i\gamma_j \\ |\gamma_j - (t+u+v_j)|\leq T^{o(1)}}} \prod_{j=1}^J P_j^{\beta_j - \frac{1}{2}}.
\end{align*}
Since $P_1\cdots P_J\ll N/N_1$ and $N\ll T^{1/2+\epsilon}$, we have
\begin{align*}
\prod_{j=1}^J P_j^{\beta_j - \frac{1}{2}} \ll \left(\frac{T^{1/2+\epsilon}}{N_1} \right)^{\text{max}_j \beta_j - \frac{1}{2}},
\end{align*}
and therefore by symmetry
\begin{align*}
P_1\Bigl(\frac{1}{2}+i(t+u)\Bigr)&\cdots P_J\Bigl(\frac{1}{2}+i(t+u)\Bigr) \ll T^{o(1)} \int_{|v_1| \leq T^{2\epsilon}} \frac{1}{1+|v_1|}\sum_{\substack{\rho_1 = \beta_1 + i\gamma_1 \\ |\gamma - (t+u+v_1)|\leq T^{o(1)}}} \left(\frac{T^{1/2+\epsilon}}{N_1} \right)^{\beta_1 - \frac{1}{2}}  \\
&\times \Big(\int_{|v|\leq T^{2\epsilon}}\frac{1}{1+|v|}\Big(\sum_{\substack{\rho = \beta+i\gamma \\ |\gamma - (t+u+v)|\leq T^{o(1)}}} 1 \Big) dv\Big)^{J-1} dv_1+ T^\epsilon \\
&\ll T^{o(1)} \int_{|v_1| \leq T^{2\epsilon}} \frac{1}{1+|v_1|}\sum_{\substack{\rho_1 = \beta_1 + i\gamma_1 \\ |\gamma - (t+u+v_1)|\leq T^{o(1)}}} \left(\frac{T^{1/2+\epsilon}}{N_1} \right)^{\beta_1 - \frac{1}{2}}dv_1+ T^\epsilon \\
&\ll T^\epsilon \sum_{\substack{\rho = \beta+i\gamma \\ |\gamma - t| \leq T^{3\epsilon}}} \Bigl(\frac{T^{1/2}}{N_1}\Bigr)^{\beta-1/2} + T^\epsilon.
\end{align*}
This in turn gives
 \begin{align*}
&\int_T^{2T}\Bigl|\zeta\Bigl(\frac{1}{2}+it\Bigr)\Bigr|^4\Bigl|P_1\Bigl(\frac{1}{2}+i(t+u)\Bigr)\cdots P_J\Bigl(\frac{1}{2}+i(t+u)\Bigr)N\Bigl(\frac{1}{2}+i(t+u)\Bigr)\Bigr| dt\\
&\ll 
T^{1+\epsilon}+T^{\epsilon}\sum_{T/2\le |\rho|\le 3T}\Bigl(\frac{T^{1/2}}{N_1}\Bigr)^{\text{Re}(\rho)-1/2}\int_{|t-\text{Im}(\rho)|<T^{3\epsilon}}\Bigl|\zeta\Bigl(\frac{1}{2}+it\Bigr)\Bigr|^4 \Bigl|N\Bigl(\frac{1}{2}+it+iu\Bigr)\Bigr| dt,
\end{align*}
 as required.
 \end{proof}
 %
%
%
%
In order to finish the proof of Proposition \ref{prop:FifthMoment} it suffices to prove the following result.
 %
%
%
%
 \begin{lemma}
Assume Hypothesis $\mathcal{F}$ and that every non-trivial zero occurs in a cluster of length $\leq T^{\varepsilon_0}$. Then we have
 \[
 \sup_{\substack{|u|<T^{\epsilon}\\ N_1\ll T^{1/4}}}\sum_{T/2\le |\rho|\le 3T}\Bigl(\frac{T^{1/2}}{N_1}\Bigr)^{\textup{Re}(\rho)-1/2}\int_{|t-\textup{Im}(\rho)|<T^{3\epsilon}}\Bigl|\zeta\Bigl(\frac{1}{2}+it\Bigr)\Bigr|^4 \Bigl|\sum_{\substack{N_1\le n_1<2N_1\\ P^+(n_1)<T^\epsilon\\ n_1<T^{1/4}}}\frac{1}{n_1^{1/2+it+iu}}\Bigr| dt\ll T^{1+O(\varepsilon_0)+\epsilon}.
 \]
 \end{lemma}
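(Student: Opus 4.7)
The plan is to split the sum over $\rho$ according to $\mathrm{Re}(\rho)$, which takes only finitely many values under Hypothesis $\mathcal{F}$. The critical-line contribution ($\mathrm{Re}(\rho)=1/2$) carries no amplification, while for each $\sigma>1/2$ the amplification factor $(T^{1/2}/N_1)^{\sigma-1/2}$ is to be combined with the cluster structure (clusters of size $\le T^{\varepsilon_0}$ with a half-isolated zero at the bottom) and the short zero-detector of Lemma \ref{lem:GoodLengthForHIZeros}.

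For the critical-line contribution I would first pass to a continuous integral via $N(t+1)-N(t)\ll\log t$, reducing matters to showing $\int_{T/4}^{4T}|\zeta|^4|N|\,dt\ll T(\log T)^{O(1)}$. A naive Cauchy--Schwarz followed by Lemma \ref{lem:WattMVT} gives only $T^{1+\epsilon}N_1^{1/2}$, which is insufficient when $N_1\asymp T^{1/4}$. Instead I would apply the approximate functional equation to represent $\zeta(1/2+it)^2$ as a Dirichlet polynomial of length $\le T^{1/2}$ with divisor coefficients plus a dual term, use $|\zeta|^4\le 8(|A|^4+|B|^4)$, expand $|A|^4=|A^2|^2$ and integrate termwise against $N$; the diagonal contributes $T(\log T)^{O(1)}$ by standard divisor-sum estimates, and the off-diagonal is negligible, yielding $T^{1+O(\epsilon)}$ for the critical-line contribution.

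For each real part $\sigma>1/2$, I would group zeros into maximal clusters and identify the half-isolated zero $\rho_0=\sigma+i\gamma_0$ at the bottom of each; then apply Lemma \ref{lem:GoodLengthForHIZeros} with the flexible choice $A:=T^{1/2}/N_1$ (which lies in the admissible range $\log A\in[(\log\log T)^4,\log T]$ for our values of $N_1$) to produce a Dirichlet polynomial $B_\mathcal{C}$ supported on $m\sim A$ with $|B_\mathcal{C}(\rho_0)|\ge\exp(-(\log\log T)^3)$. The amplification factor $A^{\sigma-1/2}=(T^{1/2}/N_1)^{\sigma-1/2}$ is absorbed by passing to the shifted polynomial $\hat B_\mathcal{C}(s):=B_\mathcal{C}(s+\sigma-1/2)$, which is still supported on $m\sim A$ but now has coefficients of size $\lesssim A^{1/2-\sigma}\exp((\log\log T)^{O(1)})$ and satisfies $\hat B_\mathcal{C}(1/2+i\gamma_0)=B_\mathcal{C}(\rho_0)$, so the amplification has been traded for a smaller coefficient $L^\infty$-norm. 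Pigeonholing over the $\exp((\log\log T)^3)$ polynomials in the set $\mathcal{D}$ of Lemma \ref{lem:GoodLengthForHIZeros} reduces to a single fixed $B$; swapping the sum over clusters with the integral and using the local mean-value bound
\[
\sum_{|\gamma_{0,\mathcal{C}}-t|\le T^{\varepsilon_0+O(\epsilon)}}|\hat B(1/2+i\gamma_{0,\mathcal{C}})|^2 \ll T^{O(\varepsilon_0+\epsilon)}A^{1-2\sigma}\exp((\log\log T)^{O(1)}),
\]
which follows from the standard mean-value theorem for Dirichlet polynomials applied to the $(\log T)^3$-separated set of half-isolated zeros on line $\mathrm{Re}(s)=\sigma$, together with the critical-line bound $\int|\zeta|^4|N|\ll T(\log T)^{O(1)}$, the contribution from line $\sigma>1/2$ becomes $\ll T^{1+O(\varepsilon_0+\epsilon)}(N_1/T^{1/2})^{\sigma-1/2}\le T^{1+O(\varepsilon_0+\epsilon)}$.

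The main obstacle is the balancing of the parameter $A$: it must simultaneously match the amplification factor via $A^{\sigma-1/2}=(T^{1/2}/N_1)^{\sigma-1/2}$, lie in the admissible range of Lemma \ref{lem:GoodLengthForHIZeros}, and yield manageable short-interval mean-value bounds for $\hat B_\mathcal{C}$; the choice $A=T^{1/2}/N_1$ satisfies all three constraints. A secondary obstacle is the critical-line case, where Cauchy--Schwarz loses an unacceptable factor $N_1^{1/2}$ and one must invoke the approximate functional equation together with a careful divisor-sum analysis to establish $\int|\zeta|^4|N|\ll T(\log T)^{O(1)}$ directly.
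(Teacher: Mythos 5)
There is a genuine gap, and it is located exactly where your argument claims to win: the ``local mean-value bound'' $\sum_{|\gamma_{0,\mathcal{C}}-t|\le T^{\varepsilon_0+O(\epsilon)}}|\hat B(\tfrac12+i\gamma_{0,\mathcal{C}})|^2 \ll T^{O(\varepsilon_0+\epsilon)}A^{1-2\sigma}\exp((\log\log T)^{O(1)})$ is not a consequence of the mean value theorem, and in fact it is false. First, the discrete mean value theorem over a window of length $T^{\varepsilon_0+O(\epsilon)}$ for a polynomial of length $\approx A=T^{1/2}/N_1\ge T^{1/4}$ carries the unavoidable term proportional to the polynomial length: with coefficients $b(m)m^{-\sigma}$ of $\ell^2$-norm squared $\approx A^{1-2\sigma}$, the best it gives is $\ll (T^{\varepsilon_0+O(\epsilon)}+A)\,A^{1-2\sigma}\approx A^{2-2\sigma}$, a factor $\approx A$ larger than you claim. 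Second, and decisively, your claimed bound contradicts the very input you use: by Lemma \ref{lem:GoodLengthForHIZeros}, $|\hat B(\tfrac12+i\gamma_0)|=|B(\rho_0)|\ge \exp(-(\log\log T)^3)$ at \emph{every} half-isolated zero in the window, so the left-hand side is $\ge \exp(-2(\log\log T)^3)$ as soon as one such zero is present, whereas the right-hand side is $\le T^{O(\varepsilon_0+\epsilon)}A^{1-2\sigma}\le T^{O(\varepsilon_0+\epsilon)-c/2}\to 0$ (since $\sigma-\tfrac12$ is bounded below on each line $\ne\tfrac12$ and $A\ge T^{1/4}$). The structural problem is that you are trying to spend the amplifier against the discrete set of zeros at which it is, by construction, large; its $L^2$ average over those points cannot be small. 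If one inserts the correct mean-value bound $A^{2-2\sigma}$ instead, your final estimate acquires an uncancelled factor $\approx A^{1/2}=(T^{1/2}/N_1)^{1/2}$, which destroys the result.

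The paper avoids this by spending the amplifier against the \emph{continuous} $t$-average weighted by $|\zeta|^4|N|$, where Watt's theorem shows it is small on average: it uses the short detector $R(s)$ of length $R\ll T^{\epsilon}$ from Proposition \ref{prp:HalfIsolated}, with $|R(\tfrac12+i\gamma_0)|\gg R^{\beta_0-1/2}(\log T)^{-100}$, raised to a power $\kappa$ chosen so that $R^{\kappa}\approx T^{1/2}/N_1$; this absorbs the amplification factor $(T^{1/2}/N_1)^{\beta_0-1/2}$ at the cost of $T^{o(1)}$, since $\kappa\ll \log T/(\log\log T)^3$. The discrete sum over nearby half-isolated zeros is then converted into a shift integral via Lemma \ref{lem:DiscreteToContinuous}, and $\int|\zeta|^4|N|\,|R|^{\kappa}$ is split by Cauchy--Schwarz into $\int|\zeta|^4|N|^2|R|^{2j_1}$ and $\int|\zeta|^4|R|^{2j_2}$ with $N_1R^{j_1},R^{j_2}\le T^{1/4}$, so both are Watt-admissible (Lemma \ref{lem:WattMVT}) and are $\ll T^{1+\epsilon}$; note your single amplifier of length up to $T^{1/2}$ would not be Watt-admissible even if the zero-sum issue were fixed. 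A minor further point: your worry about the critical line is unnecessary, since the coefficients of $N(\tfrac12+it+iu)$ are $n_1^{-1/2}$ and $N_1\ll T^{1/4}$, so Cauchy--Schwarz plus Lemma \ref{lem:WattMVT} already gives $\int_{T/4}^{4T}|\zeta(\tfrac12+it)|^4|N(\tfrac12+it+iu)|\,dt\ll T^{1+\epsilon}$ directly, with no need for an approximate functional equation argument.
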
 
 %
%
%
%
 \begin{proof}
By Hypothesis $\mathcal{F}$ the non-trivial zeros lie on finitely many vertical lines, and so it suffices to consider each of the possibilities for $\text{Re}(\rho)$ separately. Moreover, since we assume that all zeros lie in a cluster of $\leq T^{\varepsilon_0}$ zeros, every zero with real part $\beta_0$ is within $O(T^{\varepsilon_0+o(1)})$ of a half-isolated zero with real part $\beta_0$. Upon redefining $\epsilon$, it suffices to show that for any choice of $\beta_0$
\[
\Bigl(\frac{T^{1/2}}{N_1}\Bigr)^{\beta_0-1/2}\sum_{\substack{T/2<|\rho_0=\beta_0+i\gamma_0|\leq 3T\\ \text{half-isolated} }}\int_{|t-\gamma_0|<T^{\varepsilon_0 + \epsilon}} \Bigl| \zeta \Bigl(\frac{1}{2}+it \Bigr) \Bigr|^{4}\Bigl|N\Bigl(\frac{1}{2}+it+iu\Bigr)\Bigr| dt\ll T^{1+O(\varepsilon_0)+\epsilon}.
\]

We now recall from Proposition \ref{prp:HalfIsolated} that half-isolated zeros have short zero-detecting Dirichlet polynomials. In particular, there is an $R$ with $\exp ((\log \log T)^3) \ll R \ll T^\epsilon$ and a Dirichlet polynomial
\[
R(s)=\sum_{R/2 < n \leq R}\frac{a_n}{n^s}
\]
with $|a_n|\ll \Lambda(n)$ such that $|R(1/2+i\gamma_0)|\gg R^{\beta_0-1/2}(\log T)^{-100}$ whenever $\beta_0+i\gamma_0$ is a half-isolated zero. Thus we have for any positive integer $\kappa \ll (\log T)/(\log \log T)^3$ with $R^\kappa \geq T^\epsilon$ that
\begin{align*}
&\Bigl(\frac{T^{1/2}}{N_1}\Bigr)^{\beta_0-1/2}\sum_{\substack{T/2<|\rho_0|\leq 3T\\ \text{half-isolated} }}\int_{|t-\gamma_0|<T^{\varepsilon_0 + \epsilon}} \Bigl| \zeta \Bigl(\frac{1}{2}+it \Bigr) \Bigr|^{4}\Bigl|N\Bigl(\frac{1}{2}+it+iu\Bigr)\Bigr|\\
&\ll T^{o(1)} \Bigl(\frac{T^{1/2}}{N_1 R^{\kappa}}\Bigr)^{\beta_0-1/2}\int_{T/4}^{4T} \Bigl| \zeta \Bigl(\frac{1}{2}+it \Bigr) \Bigr|^{4}\Bigl|N\Bigl(\frac{1}{2}+it+iu\Bigr)\Bigr| \sum_{\substack{T/2<|\rho_0|\leq 3T\\ \text{half-isolated} \\ |\gamma_0-t| < T^{\varepsilon_0 + \epsilon}}} \left|R \left( \frac{1}{2}+i\gamma_0 \right) \right|^\kappa dt\\
&\ll_\epsilon  T^{o(1)} \Bigl(\frac{T^{1/2}}{N_1 R^{\kappa}}\Bigr)^{\beta_0-1/2}\int_{|v|\leq T^{\varepsilon_0+2\epsilon}}\int_{T/4}^{4T} \Bigl| \zeta \Bigl(\frac{1}{2}+it \Bigr) \Bigr|^{4}\Bigl|N\Bigl(\frac{1}{2}+it+iu\Bigr)\Bigr| \Bigl|R\Bigl(\frac{1}{2}+it+iv\Bigr)\Bigr|^{\kappa}dt dv,
\end{align*}
where the last inequality follows from Lemma \ref{lem:DiscreteToContinuous}. Let $j_1$ be the non-negative integer such that $N_1 R^{j_1}\le T^{1/4}\le N_1 R^{j_1+1}$. Let $j_2$ be the positive integer such that $R^{j_2}\le T^{1/4}\le R^{j_2+1}$. We now choose
\[
\kappa=j_1+j_2=\frac{\log(T^{1/2}/N_1)}{\log{R}}+O(1).
\]
Cauchy-Schwarz then implies that
\[
\int_{T/4}^{4T} \Bigl| \zeta \Bigl(\frac{1}{2}+it \Bigr) \Bigr|^{4}\Bigl|N\Bigl(\frac{1}{2}+it+iu\Bigr)\Bigr| \Bigl|R\Bigl(\frac{1}{2}+it+iv\Bigr)\Bigr|^{\kappa}dt\ll I_1^{1/2}I_2^{1/2},
\]
where
\begin{align*}
I_1&:=\int_{-4T}^{4T} \Bigl| \zeta \Bigl(\frac{1}{2}+it \Bigr) \Bigr|^{4}\Bigl|N\Bigl(\frac{1}{2}+it+iu\Bigr)\Bigr|^{2} \Bigl|R\Bigl(\frac{1}{2}+it+iv\Bigr)\Bigr|^{2j_1}dt,\\
I_2&:=\int_{-4T}^{4T} \Bigl| \zeta \Bigl(\frac{1}{2}+it \Bigr) \Bigr|^{4}\Bigl|R\Bigl(\frac{1}{2}+it+iv\Bigr)\Bigr|^{2j_2}dt.
\end{align*}
Since $N_1 R^{j_1}<T^{1/4}$ and $R^{j_2}<T^{1/4}$ in both cases we are estimating a twisted fourth moment with a Dirichlet polynomial of length at most $T^{1/4}$. Lemma \ref{lem:WattMVT} therefore gives $I_1,I_2\ll T^{1+\epsilon}$. Thus we obtain the bound
\begin{align*}
&\Bigl(\frac{T^{1/2}}{N_1}\Bigr)^{\beta_0-1/2}\sum_{\substack{T/2<|\rho_0=\beta_0+i\gamma_0|\leq 3T\\ \text{half-isolated} }}\int_{|t-\gamma_0|<T^{\varepsilon_0 + \epsilon}} \Bigl| \zeta \Bigl(\frac{1}{2}+it \Bigr) \Bigr|^{4}\Bigl|N\Bigl(\frac{1}{2}+it+iu\Bigr)\Bigr| dt \\
&\ll  \Bigl(\frac{T^{1/2}}{N_1 R^{\kappa}}\Bigr)^{\beta_0-1/2} T^{1+\varepsilon_0+4\epsilon}.
\end{align*}
Recalling that $R^{\kappa}\gg T^{1/2-\epsilon}N_1^{-1}$ , we see that we obtain an overall bound of $T^{1+\varepsilon_0+5\epsilon}$, as required.
\end{proof}
%
%
%
%

\section{Smooth functions}

We need a smooth function with certain standard properties.

\begin{lemma}\label{lem:Partition}
There is a fixed non-negative smooth function $w_0(x)$ bounded by 1 and supported on $[\frac{1}{2},2]$ such that if $x\geq 1$ then
\begin{align*}
1 = \sum_{m=0}^\infty w_0\Bigl(\frac{x}{2^m} \Bigr).
\end{align*}
The function $w_0(x)$ satisfies the uniform derivative estimate
\begin{align*}
|w_0^{(j)}(x)| \ll 1+ \left( \frac{4j^2}{e^2}\right)^j
\end{align*}
for $j\geq 0$. The Mellin transform $W_0(s)$ of $w_0(x)$ satisfies $W_0(0) = \log 2$, and
\begin{align*}
|W_0(\sigma+it)| &\ll 2^{|\sigma|}\exp \Big(- \sqrt{|t|/2} \Big)
\end{align*}
uniformly in $\sigma$ and $t$.
\end{lemma}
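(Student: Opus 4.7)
The plan is to build $w_0$ as a difference of shifts of a single Gevrey-class-$2$ transition function, and then read off all the properties. First, I would construct a smooth non-decreasing $\phi:\mathbb{R}\to[0,1]$ with $\phi(x)=0$ for $x\le 1/2$ and $\phi(x)=1$ for $x\ge 1$, satisfying the Gevrey-2 derivative estimate $|\phi^{(j)}(x)|\ll (4j^2/e^2)^j$ for all $j\ge 1$. This is carried out by an infinite-convolution construction: pick a fixed non-negative bump $\eta$ of mass one supported in $[-1,1]$, set $\eta_k(x):=(k^2/c)\eta(k^2x/c)$ for a suitable small absolute constant $c$, and let $\phi_0$ be the characteristic function of $[3/4,\infty)$ (or any non-decreasing step function localized in $(1/2,1)$). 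Then $\phi:=\phi_0*\eta_1*\eta_2*\cdots$ is smooth because $\sum 1/k^2<\infty$, has the correct support, and differentiating $j$ of the convolution factors gives $\|\phi^{(j)}\|_\infty\le \prod_{k=1}^{j}\|\eta_k'\|_{L^1}\ll \prod_{k=1}^j (Ck^2)=C^j(j!)^2$; tuning $c$ (equivalently $C$) and applying Stirling converts $C^j(j!)^2$ into the stated bound $(4j^2/e^2)^j$.

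Next, I would set $w_0(x):=\phi(x)-\phi(x/2)$. Support and bounds are immediate: for $x\le 1/2$ both terms vanish, for $x\ge 2$ both terms equal $1$, and on $[1/2,2]$ monotonicity of $\phi$ gives $0\le w_0(x)\le 1$. The telescoping identity
\[
\sum_{m=0}^{M}w_0(x/2^m)=\phi(x)-\phi(x/2^{M+1})
\]
yields the partition-of-unity relation by letting $M\to\infty$ for each fixed $x\ge 1$. The derivative estimate for $w_0$ follows from $w_0^{(j)}(x)=\phi^{(j)}(x)-2^{-j}\phi^{(j)}(x/2)$ and the bound on $\phi^{(j)}$. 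For $W_0(0)$ I would truncate and substitute: for any $0<\epsilon\le 1/2<2\le R$,
\[
\int_\epsilon^{R}\!\!\bigl(\phi(x)-\phi(x/2)\bigr)\frac{dx}{x}=\int_\epsilon^{R}\phi(x)\frac{dx}{x}-\int_{\epsilon/2}^{R/2}\phi(y)\frac{dy}{y}=\int_{R/2}^{R}\phi\,\frac{dx}{x}-\int_{\epsilon/2}^{\epsilon}\phi\,\frac{dx}{x}=\log 2-0,
\]
using $\phi\equiv 1$ on $[R/2,R]$ and $\phi\equiv 0$ on $[\epsilon/2,\epsilon]$.

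For the Mellin decay I would integrate by parts $j$ times, the boundary terms vanishing because $w_0$ is compactly supported inside $(0,\infty)$:
\[
W_0(s)=\frac{(-1)^j}{s(s+1)\cdots (s+j-1)}\int_{1/2}^{2}w_0^{(j)}(x)\,x^{s+j-1}\,dx.
\]
Using $|s+k|\ge |t|$ for each $k\ge 0$, the bound $\int_{1/2}^2 x^{\sigma+j-1}\,dx\ll 2^{|\sigma|}2^{j}$, and the derivative estimate on $w_0^{(j)}$, this gives
\[
|W_0(\sigma+it)|\ll 2^{|\sigma|}\Bigl(\frac{8j^2}{e^2|t|}\Bigr)^{j}
\]
for any positive integer $j$, valid whenever $|t|$ is large enough that the optimal choice $j\approx \sqrt{|t|/8}$ is at least $1$. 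Substituting this integer $j$ (where one checks $\log(8j^2/(e^2|t|))\le -2+o(1)$) yields the target $2^{|\sigma|}\exp(-\sqrt{|t|/2})$. For small $|t|$ the trivial bound $|W_0(\sigma+it)|\le \int_{1/2}^{2}x^{\sigma-1}\,dx\ll 2^{|\sigma|}$ is already of the required size since $\exp(-\sqrt{|t|/2})\asymp 1$ there. The only delicate step is the first one: the constants in the infinite-convolution derivative estimate must be tracked carefully (the product of $\|\eta_k'\|_{L^1}$ and the Stirling comparison between $(j!)^2$ and $(4j^2/e^2)^j=(2j/e)^{2j}$) to land exactly on the stated rate rather than a slightly worse constant, since the factor $2^j$ from the $x^{\sigma+j-1}$ integral is then absorbed into the $(8j^2/(e^2|t|))^j$ form that powers the optimization.
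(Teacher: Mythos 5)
Most of your outline runs parallel to the paper's own proof: the paper also takes $w_0$ to be the difference of two dilates of a single smooth transition function (there $H(2x-1)-H(x-1)$ with $H$ the normalized antiderivative of $h(x)=\exp(-1/(x(1-x)))$), gets the partition of unity by telescoping, computes $W_0(0)=\log 2$ by the same change-of-variables cancellation, and proves the Mellin decay by $j$-fold integration by parts followed by choosing $j\asymp\sqrt{|t|}$, exactly as you do. Those steps of yours are fine, and your $W_0(0)$ computation is if anything a little cleaner.

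The genuine gap is in the construction of $\phi$, at precisely the step you flag as delicate: the claim that tuning $c$ turns $C^j(j!)^2$ into $(4j^2/e^2)^j$ is false for the construction as written. With $\eta_k(x)=(k^2/c)\eta(k^2x/c)$ you have $\|\eta_k'\|_{L^1}=(k^2/c)\|\eta'\|_{L^1}$, so $C=\|\eta'\|_{L^1}/c$; but $c$ is not free, because the supports of all the $\eta_k$ must fit inside the transition window of length $1/2$, so $\sum_k c/k^2\le 1/2$ even with one-sided bumps (with your two-sided $\eta$ and a step at $3/4$ it is $\le 1/4$), while a mass-one bump on an interval of length $w$ has $\|\eta'\|_{L^1}\ge 2\sup\eta\ge 2/w$. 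This forces $C\ge 2\pi^2/3>6.5$ (and $C\gtrsim 13$ as you set it up), and by Stirling $C^j(j!)^2\asymp j^{1/2}(Cj^2/e^2)^j$, which is not $\ll (4j^2/e^2)^j$: the ratio grows like $(C/4)^j$. Indeed, with widths exactly proportional to $k^{-2}$ the target is provably out of reach of your Young-inequality bound: you would need $\prod_{k\le j}c_k\gg (e^2/(2j^2))^j$ for every $j$, i.e.\ $(j!)^2\ll (2c)^j j^{2j}e^{-2j}$ with $2c<1$, contradicting $(j!)^2\ge 2\pi j\,j^{2j}e^{-2j}$. The loss propagates to the final estimate: your optimization then only gives $|W_0(\sigma+it)|\ll 2^{|\sigma|}\exp(-c_0\sqrt{|t|})$ with $c_0=\sqrt{2/C}<1/\sqrt{2}$, weaker than the stated $\exp(-\sqrt{|t|/2})$. (Such a weaker decay would actually suffice for the paper's applications, but it does not prove the lemma as stated.) To rescue your route you would need one-sided bumps with $\|\eta'\|_{L^1}$ within a fraction of a percent of the extremal value $2$ together with a width sequence whose first few terms are of constant size rather than $\propto k^{-2}$, and a careful numerical check that the product stays below $(4j^2/e^2)^j$ uniformly in $j$; the paper avoids all this by working with the explicit function $h(x)=\exp(-1/(x(1-x)))$, for which the sharp bound $\sup_x|h^{(j-1)}(x)|\le (j-1)!\bigl(2(j-1)/e\bigr)^{j-1}$ is quoted from the literature and yields $(4j^2/e^2)^j$ directly.
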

\begin{proof}
Define 
\begin{align*}
h(x) &=
\begin{cases}
\exp\left(-\frac{1}{x(1-x)}\right), \ \ \ \ \ \ &x \in (0,1), \\
0, &\text{otherwise},
\end{cases}
\end{align*}
and
\begin{align*}
H(y) = \frac{1}{C}\int_{-\infty}^y h(x)dx,
\end{align*}
where
\begin{align*}
C = \int_0^1 h(x) dx.
\end{align*}
We then set $w_0(x) = H(2x-1) - H(x-1)$, and claim that $w_0$ has the required properties. It is easy to check that $w_0$ is smooth and supported in $[\frac{1}{2},2]$. Checking that
\begin{align*}
\sum_{m=0}^\infty w_0\Bigl(\frac{x}{2^m} \Bigr)=1
\end{align*}
for $x\geq 1$ is an exercise in telescoping series.

We turn our attention to the derivative bounds. For $j\geq 1$ we have
\begin{align*}
w_0^{(j)}(x) &=
\begin{cases}
\frac{2^j}{C}h^{(j-1)}(2x-1),  \ \ \ \ \ \ &\frac{1}{2}\leq x \leq 1, \\
-\frac{1}{C}h^{(j-1)}(x-1), &1 \leq x \leq 2,
\end{cases}
\end{align*}
so
\begin{align*}
|w_0'(x)| &\leq \frac{2 e^{-4}}{C}.
\end{align*}
For $j\geq 2$ we use \cite[Corollary A.2]{Iwa2014} and then Stirling's formula \cite{Her1955} to get
\begin{align*}
|w_0^{(j)}(x)| &\leq \frac{2^j}{C}\sup_x |h^{(j-1)}(x)| \leq \frac{2^j}{C}(j-1)!\left(\frac{2(j-1)}{e} \right)^{j-1} \\
&\leq \frac{2^j}{C}\left(\frac{j}{e} \right)^j\left(\frac{2j}{e} \right)^j \sqrt{2\pi j}\exp \left( \frac{1}{12j}\right) \frac{e}{2j(j-1)} \left(1 - \frac{1}{j}\right)^j \leq \frac{1}{C}\left( \frac{4j^2}{e^2}\right)^j.
\end{align*}

It remains to bound the Mellin transform of $w_0$. By repeated integration by parts we have
\begin{align*}
W_0(s) &= \frac{(-1)^k}{s(s+1) \cdots (s+k-1)}\int_0^\infty w_0^{(k)}(x) x^{s+k-1}dx
\end{align*}
for any integer $k\geq 0$. On the one hand, using this for $k=0$ and the triangle inequality we have the trivial bound
\begin{align*}
|W_0(s)|&\leq 2^{|\sigma|}\int_{1/2}^2 w_0(x) \frac{dx}{x} = 2^{|\sigma|}W_0(0).
\end{align*}
On the other hand, using the bound for $|w_0^{(k)}(x)|$ gives
\begin{align*}
|W_0(s)| &\ll 2^{|\sigma|}\left(\frac{8k^2}{e^2 |t|} \right)^k.
\end{align*}
If $|t|\geq 8$ we set
we set
\begin{align*}
k = \left\lfloor \frac{|t|^{1/2}}{2\sqrt{2}} \right\rfloor,
\end{align*}
so that
\begin{align*}
|W_0(s)| &\ll 2^{|\sigma|}\exp (-2k)\ll 2^{|\sigma|}\exp \left(-\frac{\sqrt{|t|}}{\sqrt{2}} \right).
\end{align*}

It remains to show that $W_0(0) = \log 2$. We have
\begin{align*}
W_0(0) = \int_{1/2}^2 w_0(x) \frac{dx}{x} = \int_{1/2}^1 H(2x-1) \frac{dx}{x} + \int_1^2 (1-H(x-1)) \frac{dx}{x},
\end{align*}
and some changes of variables reveal
\begin{align*}
\int_{1/2}^1 H(2x-1) \frac{dx}{x} &= \int_1^2 H(x-1) \frac{dx}{x}. \qedhere
\end{align*}
\end{proof}

\section{Zero detecting setup}\label{sec:ZeroDetection}
\begin{proof}[Proof of Lemma \ref{lmm:TypeIIIZeros}]
The technique is essentially that of \cite[Chapter 12]{Mon1971}, but with some small technical refinements. Let $\rho = \beta+i\gamma$ be a non-trivial zero of the Riemann zeta function with $\gamma \in [T,2T]$ and $T$ large. We assume $\beta \geq \frac{1}{2} + \frac{1}{\log T}$. For convenience we write
\begin{align*}
\mathcal{M} &= 2T^{1/100},\\
M(s) &= \sum_{m \leq \mathcal{M}} \frac{\mu(m)}{m^s},\\
Y&=T^{1/2}.
\end{align*}
Now consider the Dirichlet series
\begin{align*}
I(z) := \sum_{n\geq 1} \frac{a(n)}{n^z}\exp \Bigl( -\frac{n}{Y}\Bigr),
\end{align*}
where
\begin{align*}
a(n) := \sum_{\substack{m \mid n \\ m \leq \mathcal{M}}} \mu(m).
\end{align*}
Observe that $a(1)=1, a(n)=0$ for $2 \leq n\leq \mathfrak{M}$, and $|a(n)|\leq \tau(n)$. Hence, separating $n=1$ and using the rapid decay of $\exp(-n/Y)$ for $n>Y(\log{T})^2/2$
\begin{align*}
I(z) = \exp \Bigl( -\frac{1}{Y}\Bigr)+ \sum_{\mathcal{M}<n<Y(\log{T})^2/2} \frac{a(n)}{n^z}\exp \Bigl( -\frac{n}{Y}\Bigr)+O\Bigl(\frac{1}{T}\Bigr).
\end{align*}
Splitting into dyadic blocks,
\[
I(z)=\exp \Bigl( -\frac{1}{Y}\Bigr)+\sum_{\substack{N=2^j\\ \mathcal{M}/2<N<Y(\log{T})^2}}\sum_{n \sim N}\frac{a(n)}{n^z}\exp \Bigl( -\frac{n}{Y}\Bigr) + O\Bigl(\frac{1}{T}\Bigr).
\]
On the other hand, by using Mellin inversion to rewrite $\exp (-n/Y)$, we see that
\begin{align*}
I(z) = \frac{1}{2\pi i} \int_{(2)} Y^s \Gamma(s) M(z+s)\zeta(z+s) ds.
\end{align*}
We are interested in this expression when $z=\rho=\beta+i\gamma$. We move the line of integration to $\text{Re}(s) = -\beta + \frac{1}{2}$, in the process picking up only a contribution from the pole at $s = 1-\rho$, since the simple pole of the gamma function at $s=0$ is cancelled out by the zero $\rho$ of zeta. Therefore
\begin{align*}
I(\rho) &= Y^{1-\rho} \Gamma(1-\rho)M(1) + \frac{1}{2\pi i} \int_{(-\beta + 1/2)} Y^s \Gamma(s) M(\rho+s)\zeta(\rho+s) ds.
\end{align*}
The rapid decay of the gamma function in vertical strips implies
\begin{align*}
Y^{1-\rho} \Gamma(1-\rho)M(1) = O(T^{-1}),
\end{align*}
and since $\exp(-1/Y) = 1+O(Y^{-1})$ we find by comparing our two expressions for $I(\rho)$ that
\begin{align*}
1+O\Bigl(\frac{1}{T^{1/2}}\Bigr) +\sum_{\substack{N=2^j\\ \mathcal{M}/2<N<Y(\log{T})^2}}\sum_{n\sim N}\frac{a(n)}{n^\rho}\exp \Bigl( -\frac{n}{Y}\Bigr)= \frac{1}{2\pi i} \int_{(-\beta + 1/2)} Y^s \Gamma(s) M(\rho+s)\zeta(\rho+s) ds.
\end{align*}
At least one of the following is true:
\begin{align*}
\Big|\sum_{n\sim N}\frac{a(n)}{n^\rho}\exp \Bigl( -\frac{n}{Y}\Bigr)\Big|\geq \frac{1}{3\log{T}},\qquad\text{for some $N=2^j\in [\mathcal{M}/2,T^{1/2}(\log{T})^2]$}
\end{align*}
or
\begin{align*}
\Bigl|\frac{1}{2\pi i} \int_{(-\beta + 1/2)} Y^s \Gamma(s) M(\rho+s)\zeta(\rho+s) ds\Bigr| &\geq \frac{1}{3}. \qedhere
\end{align*}
\end{proof}

\begin{proof}[Proof of Lemma \ref{lem:TypeIIZeroBound}]
We may assume that $T$ is sufficiently large and $\sigma\ge 1/2+1/\log{T}$ since otherwise the result is trivial. We change variables in \eqref{eq:Type II zero condition} and use the triangle inequality to see that any zero $\rho=\beta+i\gamma$ counted by $R_{II}(\sigma,T)$ satisfies
\begin{align*}
\frac{1}{3}\leq \frac{T^{1/4-\beta/2}}{2\pi}\int_{-\infty}^\infty|\Gamma(-\tfrac{1}{2}+\beta+iu)|\Bigl|M\Bigl( \tfrac{1}{2}+i\gamma+iu\Bigr)\zeta\Bigl(\tfrac{1}{2}+i\gamma+iu\Bigr) \Bigr|du.
\end{align*}
We may truncate the integral to $|u|\leq (\log T)^2$, say, using the rapid decay of the gamma function. Since $\beta\ge \sigma\geq1/2+1/\log{T}$, we have
\begin{align*}
|\Gamma(-\tfrac{1}{2}+\beta+iu)| \leq \Gamma(-\tfrac{1}{2}+\beta) \ll \log T.
\end{align*}
Thus
\begin{align*}
\frac{T^{\sigma/2-1/4}}{\log T} \ll \int_{|u|\leq (\log T)^2}\Bigl|M\Bigl( \frac{1}{2}+i\gamma+iu\Bigr)\zeta\Bigl(\frac{1}{2}+i\gamma+iu\Bigr) \Bigr|du.
\end{align*}
Taking fourth powers and using H\"older's inequality yields
\begin{align*}
\frac{T^{2\sigma-1}}{(\log T)^{10}} \ll \int_{|u|\leq (\log T)^2}\Bigl|M\Bigl( \frac{1}{2}+i\gamma+iu\Bigr)\zeta\Bigl(\frac{1}{2}+i\gamma+iu\Bigr) \Bigr|^4 du.
\end{align*}
We may choose a set $\mathcal{T}$ of $(\log T)^3$-separated Type II zeros with imaginary parts in $[T,2T]$ such that
\begin{align*}
R_{II}(\sigma,T) \ll (\log T)^{4}|\mathcal{T}|,
\end{align*}
and therefore
\begin{align*}
R_{II}(\sigma,T) &\ll \frac{(\log T)^{14}}{T^{2\sigma-1}}\sum_{\beta+i\gamma \in \mathcal{T}} \int_{|u|\leq (\log T)^2}\Bigl|M\Bigl( \frac{1}{2}+i\gamma+iu\Bigr)\zeta\Bigl(\frac{1}{2}+i\gamma+iu\Bigr) \Bigr|^4 du \\
&\ll \frac{(\log T)^{17}}{T^{2\sigma-1}} \int_{T/2}^{3T} \Bigl|M\Bigl( \frac{1}{2}+it\Bigr)\zeta\Bigl(\frac{1}{2}+it\Bigr) \Bigr|^4dt.
\end{align*}
By bounds for the twisted fourth moment of the zeta function (see \cite[Theorem 1.1]{HY2010}, \cite[Theorem 1.2]{BBLR2020}, or \cite[Proposition 5.1]{HRS2019}) we have
\begin{align*}
\int_{T/2}^{3T} \Bigl|M\Bigl( \frac{1}{2}+it\Bigr)\zeta\Bigl(\frac{1}{2}+it\Bigr) \Bigr|^4dt \ll T (\log T)^{O(1)},
\end{align*}
and therefore
\begin{align*}
R_{II}(\sigma,T) &\ll T^{2(1-\sigma)} (\log T)^{O(1)}. \qedhere
\end{align*}
\end{proof}

\bibliographystyle{plain}
\bibliography{refs3}

\end{document}